\newtheorem{thm}{Theorem}[section]
\newtheorem{cor}[thm]{Corollary}
\newtheorem{prop}[thm]{Proposition}
\newtheorem{lem}[thm]{Lemma}
\theoremstyle{definition}
\newtheorem{ex}[thm]{Example}
\numberwithin{equation}{section}
\newcommand\PDinsepbase{Proposition V.1.4}	% V.1.4	% \emph{PD:insepbase}
\newcommand\qlift{Section 4}		%\ref{q desarglift} in BGPP
\newcommand\Paffsubnet{Proposition 4.6}	%\ref{P:affsubnet} in BGPP
\renewcommand \phi{\varphi}
\renewcommand \epsilon{\varepsilon}
\newcommand \eset{\varnothing}
\newcommand\cupdot {\mbox{\hspace{.15em}$\cup$\hspace{-.47em}$\cdot$\hspace{.4em}}} 
\newcommand \inv{^{-1}}
\newcommand \textb{\text{\rm b}}
\newcommand \Lat{\operatorname{Lat}}
\newcommand \Latb{\operatorname{\Lat^{\textb}}}
\newcommand \Span{\operatorname{span}}  % \span is an internal command.
\newcommand \clos{\operatorname{clos}}
\newcommand \bcl{\operatorname{bcl}}
\newcommand \rk{\operatorname{rk}}
\newcommand \codim{\operatorname{codim}}
\newcommand \bgr[1]{\langle#1\rangle}
\newcommand \full{^{{}^{{}_{{}_\bullet}}\!}}
\newcommand \downa{\,{\downarrow}\,}
\newcommand\be{\bar e}
\newcommand\bh{\bar h}
\newcommand\bv{\bar v}
\newcommand\bw{\bar w}
\newcommand \bx{\mathbf{x}}	% Frame representation mapping.  In another section, vector.
\newcommand \bz{\mathbf{z}}	% Lift representation mapping.
\newcommand\bH{\bar H}
\newcommand\bE{\bar E}
\newcommand \cA{\mathcal{A}}
\newcommand \cB{\mathcal{B}}
\newcommand \cC{\mathcal{C}}
\renewcommand \cH{\mathcal{H}}	% xypic
\renewcommand \cL{\mathcal{L}}	% xypic
\newcommand \cM{\mathcal{M}}
\newcommand \cP{\mathcal{P}}
\newcommand \cS{\mathcal{S}}
\newcommand \bbA{\mathbb{A}}
\newcommand \bbF{\mathbb{F}}
\newcommand \bbP{\mathbb{P}}
\newcommand \bbR{\mathbb{R}}
\newcommand\fF{\mathbf{F}}	% Coordinate skew field; may become \mathbf K.
\newcommand \fG{\mathfrak G}
\newcommand \fH{\mathfrak H}
\newcommand \te{{\tilde e}}
\newcommand \tf{{\tilde f}}
\newcommand \tC{{\tilde C}}
\newcommand \tE{{\tilde E}}
\newcommand \tP{{\tilde P}}
\newcommand \he{{\hat e}}
\newcommand \hf{{\hat f}}
\newcommand \hv{{\hat v}}
\newcommand \hw{{\hat w}}
\newcommand \hC{{\hat C}}
\newcommand \hE{{\hat E}}
\newcommand \hF{{\hat F}}
\newcommand \hN{{\hat N}}
\newcommand \hP{{\hat P}}
\newcommand \hS{{\hat S}}
\newcommand\hW{\hat W}
\newcommand\G{{G\full}}
\begin{document}

\pagestyle{myheadings} 
\markleft{\sc Rigoberto Fl\'orez and Thomas Zaslavsky} 
\markright{\sc Biased Graphs.  VI.  Synthetic Geometry} 
\thispagestyle{empty}

%%%%%%%%%%%%%%%%%%%%%%%%%%%%%%%%%%%%%%%%%%%%%%%%%%%%%%%%%%%%%%%%%%%%%%%%%%%%%%%%%%

\title{Biased Graphs.  VI.  Synthetic Geometry}

\author{Rigoberto Fl\'orez}
\address{The Citadel, Charleston, South Carolina 29409}
\email{\tt rigo.florez@citadel.edu}

\author{Thomas Zaslavsky}
\address{Dept.\ of Mathematical Sciences, Binghamton University, Binghamton, New York 13902-6000}
\email{\tt zaslav@math.binghamton.edu}

\date{\today}

\begin{abstract}
A biased graph is a graph with a class of selected circles (``cycles'', ``circuits''), called balanced, such that no theta subgraph contains exactly two balanced circles.  A biased graph $\Omega$ has two natural matroids, the frame matroid $G(\Omega)$ and the lift matroid $L(\Omega)$, and their extensions the full frame matroid $\G(\Omega)$ and the extended (or complete) lift matroid $L_0(\Omega)$.  In Part IV we used algebra to study the representations of these matroids by vectors over a skew field and the corresponding embeddings in Desarguesian projective spaces.  Here we redevelop those representations, independently of Part IV and in greater generality, by using synthetic geometry.
\end{abstract}

\keywords{Biased graph, gain graph, frame matroid, graphic lift matroid, matroid representation, Cevian representation, Menel{\ae}an representation, orthographic representation}

\subjclass[2010]{\emph{Primary} 05B35; \emph{Secondary} 05C22, 51A45.}
% 51A45 Incidence structures imbeddable into projective geometries.

\thanks{We dedicate this paper to the memory of Seyna Jo Bruskin, who during its preparation made our consultations so much more delightful.}
\thanks{Fl\'orez's research was partially supported by a grant from The Citadel Foundation.  Zaslavsky's research was partially assisted by grant DMS-0070729 from the National Science Foundation.}

\maketitle

\tableofcontents

\newpage %%%%%%%%%%%%%%%%

%%%%%%%%%%%%%%%%%%%%%%%%%%%%%%%%%%%%%%%%%%%%%%%%%%%%%%%%%%%%%%%%%%%%%%%%%%%%%%%%%%
\pagebreak[2]\section*{Introduction} \label{intro}

This paper bridges a few branches of combinatorial mathematics:  matroids, graphs, and incidence geometry.  The root of our branches is biased graphs.  
A biased graph is a graph that has an additional structure which gives it new properties that are yet recognizably graph-like.  Notably, it has two natural generalizations of the usual graphic matroid, which we call its frame and lift matroids.  In Part IV of this series\footnote{To read this paper it is not necessary to know other parts of this series.  We refer to previous parts, specifically Parts I, II, IV, and (once) V \cite{BG1, BG2, BG4, BG5}, by Roman numeral; e.g., Theorem IV.7.1 is Theorem 7.1 in Part IV \cite{BG4}.}   
we studied linear, projective, and affine geometrical representation of those matroids using coordinates in a skew field.
That leaves a gap in the representation theory because there are projective and affine geometries (lines and planes) that cannot be coordinatized by a skew field and there are biased graphs whose matroids cannot be embedded in a vector space over any skew field.  
The reason for that gap is that our representation theory depended on coordinates.  Here we close the gap with an alternative development of geometrical representation of the frame and lift matroids that is free of coordinates.  
The development is purely synthetic: we reconstruct the analytic point and hyperplane representations from Part IV without coordinates and we prove that the synthetic representations, when in Desarguesian geometries, are equivalent to the analytic ones.

Because all projective and affine geometries of rank higher than 3 are Desarguesian, we are not generalizing Part IV---except for planes.  
Notably, there are many biased-graphic matroids that do not embed in a projective geometry; we describe some examples of rank at least 4 in Section \ref{nonprojective}, but we do not give general criteria to decide when a matroid of a particular biased graph has a projective representation.  
(By contrast, in \cite{BGPP} we prove explicit algebraic criteria for representability of matroid of a biased graph of order 3 in a projective plane, although applying our criteria is difficult because not enough is known about ternary rings of non-Desarguesian planes).  
We nevertheless think our synthetic treatment is well justified as an axiomatic treatment of projectively representable biased-graphic matroids.

%%%%%%%%%%%%%%%%%%%%%%%%%%%%%%%%%%%%%%%%%%%%%%%%%%%%%

\pagebreak[2]\section{Graphs, biased graphs, geometry} \label{prelim}

So as not to require familiarity with the many previous parts of this series, we repeat necessary old definitions as well as giving new ones and providing the required background from algebra and projective geometry.

%%%%%%
\subsection{Algebra}\label{algebra}\

We denote by $\fF$ a skew field.  Its multiplicative and additive groups are $\fF^\times$ and $\fF^+$.  it multiplies vectors on the left.

%%%%%%
\subsection{Projective and affine geometry}\label{geom}\

\newcommand\0{\mathbf0}
\newcommand\by{\mathbf y}

It is not that easy to find a standard reference for the basics of projective and affine geometry other than in the plane.  
We summarize essentials for readers who are not familiar with them; also to fix our terminology and notation.  
We stress that all our geometries are \emph{finitary}, which means that every dependency contains a finite dependency (as we shall explain shortly).

There are two kinds of projective and affine geometries: those defined by coordinates in a skew field, written $\bbP(\fF)$ where $\fF$ is the skew field, and those defined axiomatically, without coordinates, written $\bbP$.  The dimension may be infinite, but since there is no topology the geometry is \emph{finitary}: every dependent set of vectors contains a finite dependent set.  Every coordinatized geometry is also an axiomatic geometry.  The converse is false in general but a fundamental theorem says that every axiomatic geometry of dimension greater than $2$ has coordinates in some skew field.  For this to be meaningful we must define dimension, which in turn requires the notions of independence and span; those are some of the concepts we explain here.  

Now we begin again from the beginning.

\subsubsection{Coordinatized projective geometry}
We begin with one of the several constructions of projective geometries over a skew field $\fF$.  Consider an $\fF$-vector space $\fF^N$, where $N$ is a set; that is, the vector space comes with a coordinate system.  The set of lines in $\fF^N$ is the point set of a projective geometry $\bbP_N(\fF)$, or simply $\bbP(\fF)$.\footnote{We write a subscript because superscript notation customarily implies the dimension is $\#N$.}  A set of projective points is defined to be a (\emph{projective}) \emph{subspace} when the vectors that belong to its points are the nonzero vectors in a subspace of $V(\fF)$.  The \emph{projective geometry} (or \emph{projective space}) $\bbP_N(\fF)$ is defined to be the incidence structure of projective points and subspaces, \emph{incidence} being set containment.  (We say $\bbP_N(\fF)$ is a projective space \emph{over $\fF$.}  This construction of $\bbP_N(\fF)$ is called \emph{central projection} of $\fF^N$.  Each vector $\bx$ has coordinates, written $(x_i)_{i\in N}$.  Two nonzero vectors in $\fF^{N} $ that lie in the same projective point $\he$ have the same coordinates up to a nonzero scalar: $\by = \alpha \bx$ for some $\alpha \in \fF^\times$.  The \emph{homogeneous coordinates} of $\he$ are $[\he] := [\bx] := [x_i]_{i\in N} := \{ \alpha\bx: \alpha \in \fF^\times \}$ (three notations; each has its use).  Note that $[\bx]=[\by]$; it does not matter which vector one chooses from $\he$ to generate the homogeneous coordinates.  

A projective geometry of this kind is called \emph{coordinatized}.  

\subsubsection{Coordinatized affine geometry}
Next is a simple construction of affine geometries over $\fF$.  The translates of the linear subspaces of $\fF^N$ are called \emph{affine subspaces} (or \emph{affine flats}); with these extra subspaces the set $\fF^N$ becomes an \emph{affine geometry} $\bbA^N(\fF)$ or simply $\bbA(\fF)$.  The affine geometry has the same coordinate system as does $\fF^N$.  

There is a second important construction of $\bbA(\fF)$ from a vector space.  Begin with $\fF^N$ with a distinguished coordinate $x_0$ and choose the linear hyperplane $h_0 := \{ \bx \in \fF^N: x_0=0 \}$.  Translate $h_0$ to the affine hyperplane $h_1 := \{ \bx \in \fF^N: x_0=1 \}$ and define the subspaces of $h_1$ to be its intersections with arbitrary vector subspaces $T$ of $\fF^N$.  With these subspaces $h_1$ is an affine space of dimension $\#N-1$.
This affine space is naturally isomorphic to that obtained from $\bbP_N(\fF)$, the central projection of $\fF^N$, by taking the ideal hyperplane $h_\infty$ to be the hyperplane in $\bbP_N(\fF)$ that corresponds to $h_0$.  
The isomorphism is simple:  an affine subspace $T \cap h_1$ corresponds to the projective subspace implied by the vector subspace $T$.  Thus the points of $h_1$, as an affine space, correspond one-for-one to the lines of $\fF^N$ that do not lie in $h_0$; the ideal points of $\bbP_N(\fF)$ correspond to the lines that do lie in $h_0$.

\subsubsection{Axiomatics}
The properties of an axiomatic projective or affine geometry, $\bbP$ or $\bbA$, are the same, except that there are no coordinates and no projective or affine combinations.  Here is a system of axioms for projective geometr, modernized from \cite[Ch.\ II]{Whd}, also found in \cite[Ch.\ I, Axioms A and E\,0]{VY} and \cite[Theorem 2.3(a--c)]{BC}.
\begin{enumerate}[I.]
\item There is a set $\cP$ of points.
\item There is a set $\cL$ of lines, each of which is a subset of $\cP$.
\item Any two points $p,q$ lie on a unique line, written $pq$.
\item Every line has at least three points.
\item For noncollinear points $p,q,r$, if $p' \in qr$ and $q' \in pr$, then $pp' \cap qq' \neq \eset$.
\item[V$'$.] For points $p,q,r,s$, if $pq \cap rs \neq \eset$, then $pr \cap qs \neq \eset$.
\end{enumerate}
Axioms V and V$'$ are equivalent alternatives; some prefer one, some the other.  
We assume our geometries are finitary (defined below in connection with independence).
If there is at most one point, there cannot be a line.  If there is more than one point, there is a line so there are at least three points.  A \emph{subspace} is a point set that is line-closed, i.e., it contains the whole line determined by any two of its points.  
We do not, as is usually done, assume $\cP$ and $\cL$ are nonempty, because for technical simplicity we want $\eset$ and a solitary point, which are subspaces, to qualify as projective geometries.  However, there are only those two abnormal cases.  

There are axioms for affine geometry (see \cite[Theorem 2.7(a--c)]{BC}) but we simply define an axiomatic affine geometry as a projective geometry with a hyperplane removed as in Section \ref{proj-aff}.

Two fundamental theorems say that every projective or affine geometry of dimension greater than two (to be defined soon) is Desarguesian, which means it satisfies a certain incidence property called \emph{Desargues' theorem}; and that every Desarguesian projective or affine geometry is coordinatizable, that is, it is constructible from a vector space in the manner just described.  Thus, only projective and affine lines and planes can be non-coordinatizable.

\subsubsection{Internal structure}
We defined subspaces; we list some properties.  We often call a projective or affine subspace a \emph{flat} of its geometry (but a vector subspace is not called a flat).  A projective or affine subspace is itself a projective or affine geometry.  The intersection of subspaces is a subspace; intersection is the \emph{meet operation} $\wedge$ in the subspace lattice: $t \wedge t' := t \cap t'$.  The span, $\Span(S)$, of a  point set $S$ is the smallest subspace that contains it.  Span is the \emph{join operation} $\vee$ in the subspace lattice; that is, $t\vee t\ := \Span(t \cup t')$.  In particular, $p \vee q$ denotes the line through points $p$ and $q$.  A \emph{hyperplane} is a maximal proper subspace; every subspace is the intersection of hyperplanes.  A \emph{relative hyperplane} in a subspace $t$ is a subspace that is a hyperplane of $t$.

A point set $S$ is \emph{independent} if every proper subset spans a properly smaller subspace than $\Span(S)$.  We assume that \emph{independence is finitary}: a set is independent if and only if every finite subset is independent.  (This assumption excludes some geometries, in particular those defined with a topology that admits convergent infinite sequences.)  

A \emph{basis} of a subspace $t$ is a maximal independent subset of $t$.  Every subspace has a basis, by Zorn's Lemma.  A proper subset of a basis for $t$ spans a proper subspace of $t$.  Every basis has the same cardinality; the \emph{dimension} of $t$ is $1$ less than the size of a basis.  The matroid rank, however, is the cardinality of a basis.  A line has dimension $1$; a plane has dimension $2$; etc.  In a projective geometry subspace dimension obeys the modular law:  $\dim t + \dim t' = \dim(t \wedge t') + \dim(t \vee t')$.  In an affine geometry that is true when $t \wedge t' \neq \eset$.

In a Desarguesian projective space $\bbP(\fF)$ constructed by central projection, the dimension of a projective subspace $t$ is $\dim T -1$, where $T$ is the corresponding vector subspace.  In particular, $\dim\bbP_N(\fF) = \#N-1$.  The coordinatized definition of projective dependence of a point set $S \subseteq \bbP(\fF)$ is that some finite subset of $S$ satisfies an equation of the form $\sum_{i=1}^k \lambda_i [\bx_i] = 0$ in homogeneous coordinates, with all $\lambda_i \in \fF^\times$.  (It follows that $S$ is affinely independent if and only if every finite subset is affinely independent.)  This agrees with the axiomatic definition.

In a Desarguesian affine space, using the construction of $\bbA(\fF)$ by translating linear subspaces, the dimension of an affine subspace $t$ is the same as that of its corresponding linear subspace $T$, but the notion of dependence changes.  We say a set $S$ in $\bbA(\fF)$ is affinely independent if no nonempty finite subset $\{\bx_1,\ldots,\bx_k$ satisfies an equation $\sum_{i=1}^k \lambda_i \bx_i = \0$ with nonzero scalars such that $\sum_{i=1}^k \lambda_i = 0$.  A consequence is that while $\dim t = \dim T$, a linear basis $B$ of $T$ is one point short of an affine basis; an affine basis is, for instance, $T \cup \{\0\}$.

\subsubsection{From projective to affine}\label{proj-aff}
We often pass back and forth between related projective and affine geometries.  For one direction, in a projective geometry $\bbP$ choose any hyperplane $h_0$.  The point set $\bbP \setminus h_0$ is then an affine geometry, the affine subspaces being the sets $t \setminus h_0$ for all projective subspaces $t \not\subseteq h_0$.  Projectively independent sets in $\bbP \setminus h_\infty$ become affinely independent sets; the bases of affine subspaces are the bases of the corresponding projective subspaces that are disjoint from $h_\infty$.  From the viewpoint of $\bbA$, $h_\infty$ is called the \emph{ideal} or \emph{infinite hyperplane} of $\bbP$.  Points and subspaces in $h_\infty$ are called \emph{ideal} or \emph{infinite}; the rest are called \emph{ordinary}.

If $\bbP$ is Desarguesian, all hyperplanes are equivalent under projective isomorphisms; thus, we get the same affine geometry (up to isomorphism) no matter which hyperplane we choose.  A precise statement is that $\bbP(\fF) \setminus h_\infty = \bbA(\fF)$, i.e., the affine geometry has the same coordinate skew field.  It is often convenient to choose coordinates so that $h_\infty = \{ [\bx] \in \bbP(\fF) : x_0 = 0 \}$ where $x_0$ is a chosen coordinate.  Then all ordinary points have homogeneous coordinates in which $x_0=1$ and the affine coordinates are the projective coordinates with $x_0$ omitted.

When $\bbP$ is a non-Desarguesian projective plane, the resulting affine plane depends on the choice of $h_\infty$, in the sense that not all such planes need be isomorphic.  This fact does not affect our synthetic approach.

\subsubsection{From affine to projective}\label{aff-proj}
In the opposite direction, given any affine geometry $\bbA$ there is a unique projective geometry $\bbP$ in which $\bbA = \bbP \setminus h_\infty$ for a projective hyperplane $h_\infty$.  

First, lines in $\bbA$ are parallel if they are coplanar but have no common point.  For each parallel class of lines, say $L$, create a new point, $L_\infty$, called an \emph{ideal} or \emph{infinite} point; the points of $\bbA$ are now called \emph{ordinary} points.  
The union of $\bbA$ and these ideal points is a projective geometry $\bbP$, which we call the \emph{projective completion} of $\bbA$.  The set $h_\infty$ of ideal points is a hyperplane in $\bbP$, called the \emph{ideal} or \emph{infinite hyperplane}.  
The subspaces of $\bbP$ are of two kinds: first, the \emph{ordinary} subspaces, which are the projective completions $s_\bbP$ of affine subspaces $s$---that means adding to $s$ all the points $L_\infty$ for which $s$ contains a line in the class $L$---and second, the \emph{ideal} subspaces, which are the intersections $s_\infty \cap h_\infty$ of ordinary subspaces with the ideal hyperplane.  Then the hyperplanes of $\bbP$ are $h_\infty$ and the ordinary hyperplanes.

This construction does not use coordinates.  It is a theorem that all projective geometries arise in this way; the proof is that by deleting a hyperplane (any hyperplane) from a projective geometry $\bbP$ one gets an affine geometry of the same dimension, and by adding ideal points using this construction one recovers the original projective geometry.  This is important, because it means that there is no distinguished ideal hyperplane in $\bbP$ unless we have chosen one, and we can choose any hyperplane to be ``ideal''.

Note that a basis for $\bbA$ is the same as a basis for $\bbP$ that avoids the ideal hyperplane (whichever it is).

It follows from the preceding constructions that an affine geometry can be defined as what results by deleting any hyperplane from a projective geometry.

\subsubsection{Projective duality}
In the axioms of projective geometry, points and hyperplanes occupy symmetric roles; by interchanding the names one gets a new axiom system identical or equivalent to the first.  This means that, given any projective geometry $\bbP$, there is a \emph{dual geometry} $\bbP^*$ in which the points are the hyperplanes of $\bbP$ and a hyperplane is a set of all $\bbP$- hyperplanes that contain a fixed point.  In other words, we can identify the hyperplanes of $\bbP^*$ with the points of $\bbP$; and in fact $(\bbP^*)^* = \bbP$ with this identification.  

A simple way to describe $\bbP^*$ is to say that its lattice of subspaces (remembering that a point is a subspace) is the order dual of that of $\bbP$.  An easy consequence is that duality does not change dimension.

%%%%%%
\subsection{Graphs}\label{graphs}\

A graph $\Gamma = (N,E)$, with node set $N = N(\Gamma)$ and edge set $E = E(\Gamma)$, may have multiple edges.  Its \emph{order} is $\#N$.  Edges may be links (two distinct endpoints) or half edges (one endpoint); the notation $e_{uv}$ means a link with endpoints $u$ and $v$ and the notation $e_v$ means a half edge with endpoint $v$.\footnote{The (graph) loops and loose edges that appear in other parts of this series are not needed here because they are not important in projective representation.}  If there are no half edges, $\Gamma$ is \emph{ordinary}.  If there are also no parallel edges, it is \emph{simple}.  We make no finiteness restrictions on graphs.  The \emph{empty graph} is $\eset := (\eset,\eset)$.  The \emph{simplification} of $\Gamma$ is the graph with node set $N(\Gamma)$ and with one edge for each class of parallel links in $\Gamma$, where links are parallel if they have the same endpoints.

For $S\subseteq E$, $N(S)$ means the set of nodes of edges in $S$.  
The subgraph induced by $X \subseteq N$ is notated $\Gamma{:}X := (X,E{:}X)$, where $E{:}X := \{ e \in E: N(e) \subseteq X \}.$  

A \emph{separating node} is a node $v$ that separates one edge from another; i.e., there is a partition $\{A,B\}$ of $E$ such that $N(A)\cap N(B) = \{v\}$.  (For example, a node incident to a half edge and another edge is a separating node.)  A graph is \emph{inseparable} if it is connected and has no separating node; equivalently, for every partition $\{A,B\}$ of $E$, $\#(N(A)\cap N(B)) \geq 2$.  A maximal inseparable subgraph of $\Gamma$ is called a \emph{block} of $\Gamma$.

A \emph{circle} is the edge set of a simple closed path, that is, of a connected graph with valency 2 at every node.  $C_n$ denotes a circle of length $n$.  The set of circles of a graph $\Gamma$ is $\cC(\Gamma)$.  A \emph{theta graph} is the union of three internally disjoint paths with the same two endpoints.  A subgraph of $\Gamma$ \emph{spans} (in the sense of graph theory) if it contains all the nodes of $\Gamma$.  

The number of components of $\Gamma$ is $c(\Gamma)$.  For $S \subseteq E$, $c(S)$ denotes the number of connected components of the spanning subgraph $(N,S)$.

%%%%%%
\subsection{Biased graphs and biased expansions}\label{bg}\

%%%
\subsubsection{Biased graphs}\label{bgbasics}\

A \emph{biased graph} $\Omega = (\Gamma,\cB)$ consists of an underlying graph $\|\Omega\| := \Gamma$ together with a class $\cB$ of circles satisfying the condition that, in any theta subgraph, the number of circles that belong to $\cB$ is not exactly 2.  Such a class is called a \emph{linear class} of circles and the circles in it are called \emph{balanced circles}.  (The ``bias'' is not precisely defined but it means, in essence, the set of unbalanced circles.)  Another biased graph, $\Omega_1$, is a \emph{subgraph} of $\Omega$ if $\| \Omega_1 \| \subseteq \| \Omega \|$ and $\cB(\Omega_1) = \{ C \in \cB(\Omega) : C \subseteq E(\Omega_1) \}$, i.e., the bias of $\Omega_1$ is the restriction of that of $\Omega$.
A biased graph is \emph{simply biased} if it has no balanced digons.  
A simply biased graph is \emph{thick} if every pair of adjacent nodes supports at least two edges (which implies that they induce an unbalanced subgraph).

In a biased graph $\Omega = (\Gamma,\cB)$, an edge set or a subgraph is called \emph{balanced} if it has no half edges and every circle in it belongs to $\cB$.  Thus, a circle is (consistently with the previous paragraph's definition) balanced if and only if it belongs to $\cB$, and any set containing a half edge is unbalanced.  For $S \subseteq E$, $b(S)$ denotes the number of balanced components of the spanning subgraph $(N,S)$.  $N_0(S)$ denotes the set of nodes of all unbalanced components of $(N,S)$.  
A \emph{full} biased graph has a half edge at every node; if $\Omega$ is any biased graph, then $\Omega\full$ is $\Omega$ with a half edge adjoined to every node that does not already support one.\footnote{The unbalanced (graph) loops that appear in other papers in this series are here replaced by half edges since that does not alter the matroids.  
Loops arise in contraction of gain graphs but in this paper there is no such contraction.}

In a biased graph there is an operator on edge sets, the \emph{balance-closure} $\bcl$,\footnote{Not ``balanced closure''; it need not be balanced.} defined by 
$$
\bcl S := S \cup \{ e \notin S : \text{ there is a balanced circle $C$ such that } e \in C \subseteq S \cup \{e\} \}
$$
for any $S \subseteq E$.  This operator is not an abstract closure since it is not idempotent, but it is idempotent when restricted to balanced edge sets; indeed, $\bcl S$ is balanced whenever $S$ is balanced (Proposition I.3.1).  We call $S$ \emph{balance-closed} if $\bcl S = S$ (although such a set need be neither balanced nor closed).

%%%
\subsubsection{Gain graphs and expansions}\label{bexp}\

We define gain graphs by first defining group expansions and their gain functions.  

The \emph{group expansion} $\fG\Delta$ of an ordinary graph $\Delta$ by a group $\fG$, in brief the \emph{$\fG$-expansion} of $\Delta$, is a graph whose node set is $N(\Delta)$ and whose edge set is $\fG \times E(\Delta)$, the endpoints of an edge $(g,e) \in E(\fG\Delta)$ being the same as those of $e$. 
The \emph{projection} $p : E(\fG\Delta) \to E(\Delta)$ maps $(g,e)$ to $e$.  
The \emph{full $\fG$-expansion} $\fG\Delta\full$ is $\fG\Delta$ with a half edge attached to each node.  

Each edge $(g,e)$ can be given an orientation, which is either in the direction of $e$ in $\Delta$, or not.  The mapping $\phi: E(\fG\Delta) \to \fG$, defined by $(g,e) \mapsto g$ in the former case and $g\inv$ in the latter, is the \emph{gain function} of $\fG\Delta$.  From now on we abandon the notation $(g,e)$, which was merely a notational device to define $\phi$.  Instead, we write $e$ for an edge of $\fG\Delta$ and infer direction from the context.  For instance, when we compute the gain of a circle, the circle is assumed to have a direction and its edges are oriented in that direction.
When necessary, we disambiguate the value of $\phi(e)$ on an edge $e_{uv} \in E(\fG\Delta)$ by writing $\phi(e_{uv})$ for the gain in the direction from $u$ to $v$.  
Note that a gain function is not defined on half edges.  

A \emph{gain graph} $\Phi = (\Gamma,\phi)$ is any subgraph of a full group expansion, with the restricted gain function.  (Gain graphs can also be defined without mentioning $\Delta$; see Part I.)  In $\Phi$ every circle $C = e_1e_2 \cdots e_l$ has a gain $\phi(C) := \phi(e_1)\phi(e_2)\cdots \phi(e_l)$; although the gain depends on the direction and initial edge, the important property, whether the gain is or is not the identity, is independent of those choices.  A gain graph thus determines a biased graph $\bgr{\Phi} := (\Gamma,\cB(\Phi))$ where $\cB(\Phi)$ is the class of circles with identity gain.

\emph{Switching} a gain graph means replacing $\phi$ by a new gain function defined by $\phi^\zeta(e_{uv}) := \zeta(u)\inv \phi(e_{uv}) \zeta(v)$, where $\zeta$ is some function $V \to \fG$ and $e_{uv}$ indicates that $e$ has the endpoints $u$ and $v$ and the gain is taken in the direction from $u$ to $v$.  Switching $\Phi$ does not change the associated biased graph.

A \emph{biased expansion} of $\Delta$ is a combinatorial generalization of a group expansion.  It is defined as a biased graph $\Omega$ together with a projection mapping $p: \|\Omega\| \to \Delta$ that is surjective, is the identity on nodes, and has the property that, for each circle $C = e_1 e_2 \cdots e_l$ in $\Delta$, each $i = 1,2,\ldots,l$, and each choice of $\te_j \in p\inv(e_j)$ for $j \neq i$, there is a unique $\te_i \in p\inv(e_i)$ for which $\te_1 \te_2 \cdots \te_i \cdots \te_l$ is balanced.  
We write $\Omega\downa\Delta$ to mean that $\Omega$ is a biased expansion of $\Delta$.  
We call $\Omega$ a \emph{$\gamma$-fold} expansion if each $p\inv(e)$ has the same cardinality $\gamma$; then $\gamma$ is the \emph{multiplicity} of the expansion and we may write $\Omega = \gamma\cdot\Delta$.    
It is easy to prove that a biased expansion of an inseparable graph must be a $\gamma$-fold expansion for some $\gamma$ (\PDinsepbase).  
A biased expansion is \emph{nontrivial} if it has multiplicity greater than $1$.  
A trivial expansion of $\Delta$ is simply $\Delta$ with all circles balanced.  
A simple relationship between biased and group expansions is that in the next result.

\begin{lem}\label{L:subgroupexp}
Let $\Delta$ be an inseparable simple graph that has order at least $3$.  A biased expansion of $\Delta$ that equals $\bgr{\Phi}$ for a gain graph $\Phi$ with gain group $\fG$ is the biased graph $\bgr{\fH\Delta}$ of a group expansion by a subgroup $\fH \leq \fG$, and $\Phi$ is a switching of $\fH\Delta$.
\end{lem}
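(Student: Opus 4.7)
The plan is to invoke \PDinsepbase\ and a spanning-tree switching argument to realize $\Phi$, up to switching, as the group expansion $\fH\Delta$ for a suitable subgroup $\fH\leq\fG$. Since $\Delta$ is inseparable of order at least $3$, \PDinsepbase\ provides a cardinal $\gamma$ such that every edge of $\Delta$ has exactly $\gamma$ preimages under the projection $p:\|\Omega\|\to\Delta$; the subgroup $\fH$ we produce will satisfy $|\fH|=\gamma$. I pick a spanning tree $T$ of $\Delta$ and, for each $t\in T$, a distinguished lift $\tilde t\in p\inv(t)$. Because $T$ is acyclic one constructs inductively, starting from a root, a switching function $\zeta:N(\Delta)\to\fG$ with $\phi^\zeta(\tilde t)=1$ for every $t\in T$; switching preserves balance, so I replace $\phi$ by $\phi^\zeta$ and assume henceforth that these distinguished tree lifts have identity gain.

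Orient each edge $e\in E(\Delta)$ once and for all and set $H_e:=\{\phi(\te):\te\in p\inv(e)\}\subseteq\fG$. Three intermediate claims drive the argument. First, $|H_e|=\gamma$: two lifts of $e$ with a common gain would give identical balance status for every extension to a lift of a circle $C$ through $e$, so the uniqueness clause of the biased-expansion axiom fails on any completion of $C\setminus\{e\}$ whose required balancing gain equals that shared value, and such a completion is produced by running the existence clause starting from one of the two lifts. Second, $1\in H_e$ for every $e$: for tree edges this holds by construction, while for a non-tree edge $f$ the existence clause applied to the fundamental circle of $f$ in $T\cup\{f\}$, with identity lifts of the tree edges, produces a lift of $f$ of gain $1$. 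Third, all $H_e$ coincide with a single set $\fH\subseteq\fG$: given two edges $e,e'$, the $2$-connectivity of $\Delta$ (which follows from inseparability and order at least $3$) supplies a circle $C$ through both, and with identity lifts on the remaining edges of $C$, the biased-expansion axiom applied to $C$ matches each lift of $e$ to a unique balancing lift of $e'$ whose gain is determined by the first, giving a gain-determined bijection $H_e\to H_{e'}$ that, after reconciling orientations along $C$, is the identity.

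It remains to show $\fH$ is a subgroup of $\fG$. Using $2$-connectivity again, choose a circle $C$ containing two tree edges $t_1,t_2$ that share a common node, and fix identity lifts on every edge of $C$ except $t_1,t_2$ and one additional edge $e$. Letting the lifts of $t_1$ and $t_2$ range over their gain sets, the biased-expansion axiom forces the unique balancing lift of $e$ to have gain $h_1h_2$ (up to the orientation sign) for each $(h_1,h_2)\in\fH\times\fH$, so $\fH\cdot\fH\subseteq\fH$. Combined with $1\in\fH$ this makes $\fH$ a submonoid of $\fG$; in the finite case this is already a subgroup, and in general closure under inversion follows by reversing the orientation of one edge within the same cycle argument. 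Consequently, after switching, $\Phi$ has above each edge of $\Delta$ exactly $|\fH|$ lifts whose gains range precisely over $\fH$, so $\Phi$ is isomorphic as a gain graph to $\fH\Delta$; hence $\bgr{\Phi}=\bgr{\fH\Delta}$, and the original $\Phi$ is a switching of $\fH\Delta$. The main obstacle is the third claim of the middle paragraph---simultaneous equality of the $H_e$ over $E(\Delta)$---together with the closure of $\fH$ under multiplication, both of which demand careful orientation bookkeeping inside the $2$-connected circuits of $\Delta$.
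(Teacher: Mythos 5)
Your overall strategy is the same as the paper's: normalize by switching, then apply the biased-expansion axiom to circles carrying identity lifts on all but one or two edges in order to compare the gain sets $H_e$ of the fibers and show they form a single subgroup. Your first two claims (injectivity of the gain map on each fiber, and $1\in H_e$ for every $e$) are correct, and the first in fact makes explicit a point the paper leaves implicit. But there is a genuine gap at your third claim, and it propagates into the subgroup step. When you put identity lifts on every edge of a circle $C$ except $e$ and $e'$, the circle gain is $\phi(\te)\phi(\te')$ in the traversal orientations, so the bijection the expansion axiom produces is $h\mapsto h^{-1}$, i.e.\ $H_{e'}=(H_e)^{-1}$, not $H_{e'}=H_e$. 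No ``reconciliation of orientations'' can turn all of these pairwise inversions into identities: reversing the orientation of $e'$ fixes the pair $(e,e')$, but the pair $(e',e'')$ on the same circle then demands the opposite choice, and the obstruction is precisely that you do not yet know $\fH=\fH^{-1}$. Your later remark that closure under inversion ``follows by reversing the orientation of one edge within the same cycle argument'' only reproduces the two-edge relation $(H_{e})^{-1}\subseteq H_{e'}$, so it is circular: it presupposes the equality of the $H_e$'s that you were trying to establish.

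The missing idea is the paper's three-edge trick: apply the two-edge relation to three edges $e_1,e_2,e_3$ of a single circle (which has length at least $3$ because $\Delta$ is simple) to get $H_{e_1}=(H_{e_2})^{-1}=H_{e_3}=(H_{e_1})^{-1}$; this yields closure under inversion and the equality of the gain sets simultaneously, and inseparability then propagates the common set $\fH$ to every edge of $\Delta$. With that in hand, your multiplication argument---which, exactly like the paper's, really produces $(h_1h_2)^{-1}\in\fH$ and then needs inversion closure to conclude $h_1h_2\in\fH$---goes through, and the remainder of your proof (injectivity on fibers plus $H_e=\fH$ giving $\Phi\cong\fH\Delta$ after switching) is sound.
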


\begin{proof}
Let $\Phi$ denote the gain graph.  First, we show that $\Phi$ is a group expansion.  Take a circle $C \subseteq \Delta$ and a balanced circle $\tilde C \subseteq \Phi$ that is projected $C$.  Assume $\Phi$ has been switched so $\tilde C$ has all identity gains.  

Suppose $g \in\fG$ is the gain of an edge $\te \in p\inv(C)$ and $e = p(\te)$.  Let $f \in C \setminus e$ and define $\tP$ to be the path in $\Phi$ whose projection is $C \setminus f$ and whose edges are those of $\tC$ except that it has the edge $\te$.  Then there is an edge $\tf$ covering $f$ such that $\tP \cup \{\tf\}$ is balanced, from which we infer that $\phi(\tP)\phi(\tf)=\phi(\tP\tf)=1$.  Since $\phi(\tP) = \phi(\te) = g$, $\phi(\tf) = g\inv$.  We have proved that for every two edges $e,f \in C$, if $g$ is the gain of a covering edge $\te$, then $g\inv$ is the gain of an edge $\tf$.  Letting $e_1,e_2,e_3$ be consecutive edges of $C$, it follows that $g\inv$ is the gain of an edge $\te'$ covering $e$.  We conclude that the set $\fH$ of gains of edges in $p\inv(e)$ is closed under inversion and is the same set for every edge $f$ that is in a circle with $e$.  Inseparability implies that $f$ can be any edge of $\Delta$.

Let $\te_1,\te_2,\te_3$ be edges covering $e_1,e_2,e_3$, with $\te_3$ chosen so that the circle $\tC'$ formed from $\tC$ by using $\te_1,\te_2,\te_3$ instead of whatever edges of $\tC$ cover $e_1,e_2,e_3$ is balanced.  Then $1=\phi(\tC')=\phi(\te_1)\phi(\te_2)\phi(\te_3)$.  Therefore, if $g_1,g_2 \in \fH$, so is $(g_1g_2)\inv$ and consequently $g_1g_2$.  So $\fH$ is closed under multiplication.  We have shown $\fH$ is a subgroup of $\fG$ and, since it is the whole set of gains of edges in the fiber of any edge of $\Phi$, $\Phi=\fH\Delta$.
\end{proof}

Biased expansions are studied in depth in \cite{AMQ}, where it is shown that they are built out of groups and irreducible multiary quasigroups.  That structure is not relevant here.  Biased expansions are important for us because they correspond to certain kinds of complete projective and affine structures, as will be clear from our constructions and results.

%%%
\subsubsection{Isomorphisms}\label{isom}\

An \emph{isomorphism} of biased graphs is an isomorphism of the underlying graphs that preserves balance and imbalance of circles.  
A \emph{fibered isomorphism} of biased expansions $\Omega_1$ and $\Omega_2$ of the same base graph $\Delta$ is a biased-graph isomorphism combined with an automorphism $\alpha$ of $\Delta$ under which $p_1\inv(e_{vw})$ corresponds to $p_2\inv(\alpha e_{vw})$.  In this paper \emph{all isomorphisms of biased expansions are intended to be fibered} whether explicitly said so or not.  
A \emph{stable isomorphism} of $\Omega_1$ and $\Omega_2$ is a fibered isomorphism in which $\alpha$ is the identity function (the base graph is fixed pointwise).

%%%%%%
\subsection{Matroids}\label{matroids}\

We assume acquaintance with elementary matroid theory as in \cite[Chapter 1]{Oxley}.  
The lattice of closed sets of a matroid $M$ is $\Lat(M)$.  
The matroid of projective (or affine) dependence of a projective (or affine) point set $A$ is denoted by $M(A)$.  
All the matroids in this paper are finitary, which means that any dependent set contains a finite dependent set; equivalently, that any element in the closure of an infinite set $S$ is in the closure of a finite subset of $S$.

%%%
\subsubsection{Graphic and biased-graphic matroids}\label{gbgmatroids}\

In an ordinary graph $\Gamma$ there is a closure operator $\clos_\Gamma$ on the edges, associated to the graphic matroid, also called the cycle matroid, $G(\Gamma)$.  The circuits of $G(\Gamma)$ are the circles of $\Gamma$.  We call an edge set $S$ \emph{closed in $\Gamma$} (and we call $(N,S)$ a \emph{closed subgraph} of $\Gamma$) if $S$ is closed in $G(\Gamma)$; that is, if whenever $S$ contains a path joining the endpoints of an edge $e$, then $e \in S$.  
The rank function of $G(\Gamma)$ is $\rk S = \#N - c(S)$ for an edge set $S$.  (A second formula is $\rk S = \#N(S) - c(N(S),S)$.  This gives the rank of all finite-rank subsets even if $N$ is infinite.  
The geometric lattice of closed edge sets is $\Lat\Gamma$.

A biased graph $\Omega$, however, gives rise to two kinds of matroid, both of which generalize the graphic matroid.  (For a gain graph, the matroids of $\Phi$ are those of its biased graph $\bgr{\Phi}$.)  First, the \emph{frame matroid} of $\Omega$ (formerly the ``bias matroid''), written $G(\Omega)$, has for ground set $E(\Omega)$.  A \emph{frame circuit}, that is, a circuit of $G(\Omega)$, is a balanced circle, a theta graph that has no balanced circle, or two unbalanced figures connected either at a single common node or by a path that intersects each figure at one of its endpoints.  (An \emph {unbalanced figure} is a half edge or unbalanced circle.)  
The rank function in $G(\Omega)$ is $\rk S = \#N - b(S)$.  (Another formula is $\rk S = \#N(S) - b(N(S),S)$, which gives the rank of finite-rank subsets when $N$ is infinite.  
The \emph {full frame matroid} of $\Omega$ is $\G(\Omega) := G(\Omega\full)$.  The \emph{frame} of $G(\Omega)$ is the distinguished basis consisting of the half edges at the nodes (even if they are not in $\Omega$).  
The closure operator is 
\begin{equation}
\clos S = 
\begin{cases}
E{:}N_0(S) \cup \bcl S &\text{ in general, and} \\
\bcl S &\text{ if $S$ is balanced};
\end{cases}
\label{E:clos}
\end{equation}
recall that $N_0(S)$ is the set of nodes of unbalanced components of $(N,S)$.
(The two definitions for balanced $S$ agree because then $N_0(S)$ is empty.)  
There are two kinds of closed edge set in $G(\Omega)$: balanced and unbalanced; the lattice of closed edge sets is $\Lat\Omega$ and the meet sub-semilattice of closed, balanced sets is $\Latb\Omega$.  

The archetypical frame matroid, though not presented in this language, was the Dowling geometry of a group \cite{CGL}, which is $\G(\fG K_n)$.

Second, the \emph{extended lift matroid} $L_0(\Omega)$ (also called the \emph {complete lift matroid}), whose ground set is $E(\Omega) \cup \{e_0\}$, that is, $E(\Omega)$ with an extra element $e_0$.  A \emph{lift circuit}, which means a circuit of $L_0(\Omega)$, is either a balanced circle, a theta graph that has no balanced circle, two unbalanced figures connected at one node, two unbalanced figures without common nodes, or an unbalanced figure and $e_0$.  The \emph{lift matroid} $L(\Omega)$ is $L_0(\Omega) \setminus e_0$.  The rank function in $L_0(\Omega)$, for $S \subseteq E(\Omega) \cup \{e_0\}$, is $\rk S = \#N - c(S)$ if $S$ is balanced and does not contain $e_0$, and $\#N - c(S) + 1$ if $S$ is unbalanced or contains $e_0$.  The closure operator in $L_0(\Omega)$ is 
$$
\clos_{L_0} S = \begin{cases} 
\bcl S \text{ if $S$ is a balanced edge set, } \\
\clos_{\|\Omega\|} S \cup \{e_0\} \text{ if $S$ is an unbalanced edge set,} \\
\clos_{\|\Omega\|} (S\cap E) \cup \{e_0\} \text{ if $S$ contains $e_0$.}
\end{cases}
$$  
The closure operator of $L(\Omega)$ is the same but restricted to $E$, so $\clos_L S = \clos_{\|\Omega\|} S$ if $S$ is unbalanced.

When $\Omega$ is balanced, $G(\Omega)=L(\Omega)=$ the graphic matroid $G(\|\Omega\|)$ and $L_0(\Omega)$ is isomorphic to $G(\|\Omega\| \cupdot K_2)$ ($\cupdot$ denotes disjoint union).

A (vector or projective) representation of the frame or lift matroid of $\Omega$ is called, respectively, a \emph{frame representation} (a ``bias representation'' in Part IV) or a \emph{lift representation} of $\Omega$.  An \emph{embedding} is a representation that is injective; as all the matroids in this paper are simple, ``embedding'' is merely a short synonym for ``representation''.  
A frame or lift representation is \emph{canonical} if it extends to a representation of $\G(\Omega)$ (if a frame representation) or $L_0(\Omega)$ (if a lift representation).

\begin{thm}[Theorem IV.7.1]\label{T:thick}
Every frame or lift representation of a thick, simply biased graph of order at least $3$ is canonical.

In particular, a frame or lift representation of a biased expansion of an inseparable graph of order at least $3$ is canonical.
\end{thm}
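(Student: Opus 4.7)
My plan is to construct the required extension directly, using thickness and simple bias to identify each missing projective point as a canonical intersection of lines. I treat the two parts of the theorem in parallel.

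For the frame case, given a representation $\mu$ of $G(\Omega)$ in a projective geometry $\bbP$, I want to assign to each node $v$ a projective point $b_v \in \bbP$ that will serve as the image of the half edge at $v$ in $\Omega\full$. Thickness and simple bias provide, for every pair of adjacent nodes $u, v$, two parallel edges $e, f$ whose digon is unbalanced; then formula~\eqref{E:clos} gives $\clos\{e,f\} = E{:}\{u,v\}$, so the entire parallel class is collinear in $\mu$, defining a projective line $L_{uv}$ that must also contain $b_u$ and $b_v$ in any extension to $\G(\Omega)$. I therefore set $b_v := L_{u_1 v} \cap L_{u_2 v}$ for any two distinct neighbors $u_1, u_2$ of $v$. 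In the in-particular clause, inseparability of the base graph $\Delta$ of order $\geq 3$ guarantees every node has at least two neighbors; in the general thick simply biased case, nodes of lesser degree (isolated, or with a single neighbor) are handled by enlarging $\bbP$ with a fresh independent point or by choosing an arbitrary fresh point on the sole line $L_{uv}$.

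For the lift case, given a representation $\mu$ of $L(\Omega)$, I construct the image $\he_0 \in \bbP$ of the extra element $e_0$. The line $L_{uv}$ from thickness must contain $\he_0$ in any extension, because in $L_0(\Omega)$ one has $\clos_{L_0}\{e,f\} = E{:}\{u,v\} \cup \{e_0\}$ for an unbalanced parallel pair. For any two distinct pairs of adjacent nodes, a rank calculation using the lift-matroid circuits ``two unbalanced figures joined at one common node'' or ``at disjoint node pairs'' shows the two corresponding lines lie in a common projective plane in $\bbP$ and therefore meet in a unique point, which is the candidate for $\he_0$.

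The principal obstacle is well-definedness. For the frame case, given three neighbors $u_1, u_2, u_3$ of $v$, I must show that the three lines $L_{u_i v}$ concur; I exhibit a common rank-$3$ flat in $\G(\Omega)$ containing all three lines (arising from the three parallel classes together with $v$) and then apply the projective-plane incidence axioms together with the frame circuit structure of the four-edge handcuffs at $v$ to force the common meet. The lift case is analogous, using the lift rank of the union of the relevant parallel classes and the appropriate lift circuits. Once well-definedness is established, checking that the extension represents $\G(\Omega)$ or $L_0(\Omega)$ reduces to verifying that each circuit of the enlarged matroid involving a new point is the image of a circuit of $G(\Omega)$ or $L(\Omega)$ with an unbalanced figure (or $e_0$) replaced by the corresponding $b_v$ (or $\he_0$); this preservation is built into the construction. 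The in-particular clause follows because a nontrivial biased expansion of an inseparable graph of order $\geq 3$ is automatically thick and simply biased, while the trivial-multiplicity case reduces to a standard fact about graphic matroid representations.
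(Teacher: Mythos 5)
The paper does not prove the general statement at all: its ``proof'' is a citation of Theorem IV.7.1 from Part IV, plus the one-line observation that a nontrivial biased expansion of an inseparable graph of order at least $3$ is thick and simply biased (the trivial case being standard graph theory). You instead attempt a direct synthetic proof of the cited result. Your overall strategy---use the unbalanced parallel classes guaranteed by thickness and simple bias to produce lines $L_{uv}$, then recover $b_v$ (resp.\ $\he_0$) as an intersection of such lines, using handcuff (resp.\ lift) circuits to get coplanarity---is the right idea and is essentially what underlies IV.7.1. Your reduction of the ``in particular'' clause matches the paper's exactly.

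However, as written there are genuine gaps. First, the well-definedness step is misargued: there is no rank-$3$ flat of $\G(\Omega)$ containing all three lines $L_{u_1v}, L_{u_2v}, L_{u_3v}$; their union spans the closure of $E\full{:}\{v,u_1,u_2,u_3\}$, which has rank $4$. Moreover, even if three lines did lie in a common plane, plane incidence axioms would not force them to concur---three coplanar lines generically form a triangle. The correct argument is the opposite of what you state: the lines are \emph{pairwise} coplanar (each pair lies in the rank-$3$ flat coming from a tight handcuff at $v$) but are \emph{not} all contained in one plane (rank $4$), and in a projective space pairwise-intersecting lines not all coplanar must pass through a common point. Second, the final step---that the extended configuration actually represents $\G(\Omega)$ or $L_0(\Omega)$---is asserted (``this preservation is built into the construction'') rather than proved; one must verify that \emph{every} rank, not merely the circuits used to locate $b_v$ and $\he_0$, is correct, e.g.\ that $b_v$ does not accidentally fall into the span of a balanced set not incident with $v$. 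This is where most of the work of IV.7.1 lies. Third, your treatment of nodes with at most one neighbor (``choose an arbitrary fresh point on the sole line,'' or enlarge $\bbP$) is not safe: the chosen point must avoid many forbidden positions and the line may not have enough points, and enlarging $\bbP$ changes the ambient geometry in which the representation was given.
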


\begin{proof}
The general part is Theorem IV.7.1.  The case of trivial expansions is standard graph theory.  The case of nontrivial expansions is valid because a nontrivial biased expansion is a thick biased graph.
\end{proof}

The next result explains the significance of canonical representation and the importance of a biased graph's having only canonical representations.  It gives algebraic criteria for representability of matroids of biased expansion graphs in Desarguesian projective geometries (\cite{BGPP} develops analogs for non-Desarguesian planes).

\begin{thm}\label{T:canonicalrep}
Let $\Omega$ be a biased graph and $\fF$ a skew field.  
\begin{enumerate}[{\rm(i)}]
\item The frame matroid $G(\Omega)$ has a canonical representation in a projective space over $\fF$ if and only if $\Omega$ has gains in the group $\fF^\times$.  
\item The lift matroid $L(\Omega)$ has a canonical representation in a projective space over $\fF$ if and only if $\Omega$ has gains in the group $\fF^+$.  
\end{enumerate}
\end{thm}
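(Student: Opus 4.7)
My plan is to prove each biconditional by (a) constructing an explicit canonical representation from the given gain function, and (b) extracting a gain function from the given canonical representation by choosing coordinates adapted to a distinguished basis of the matroid.

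For part (i), the ``if'' direction: given $\phi\colon E(\Omega) \to \fF^\times$ with balance class $\cB(\Omega)$, work in $\fF^N$ with standard basis vectors $\bx_v$. Represent each half edge $e_v$ of $\Omega\full$ by $\bx_v$ and each link $e_{uv}$ by $\bx_u - \phi(e_{uv})\bx_v$. Checking that frame circuits of $\G(\Omega)$ become projective dependencies reduces to the cycle calculation showing that $\sum_i c_i\bigl(\bx_{v_{i-1}} - \phi_i\bx_{v_i}\bigr) = 0$ around a circle $v_0 v_1 \cdots v_l = v_0$ has a nontrivial solution exactly when $\phi_1 \phi_2 \cdots \phi_l = 1$; tight handcuffs and thetas without a balanced circle reduce to the same mechanism. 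For the ``only if'' direction, extend the canonical representation to one of $\G(\Omega)$ and note that the half edges form a basis, so we may choose coordinates so that $e_v = \bx_v$. Because $\{e_u, e_v, e_{uv}\}$ is a frame circuit, $e_{uv}$ lies on the projective line through $\bx_u$ and $\bx_v$, and neither coefficient vanishes (otherwise $e_{uv}$ would coincide with a half edge). Normalize to obtain $[\bx_u - g\bx_v]$ with $g \in \fF^\times$ and set $\phi(e_{uv}) := g$; reversing orientation replaces $g$ by $g\inv$. The reverse of the previous cycle calculation shows that $\phi_1\cdots\phi_l = 1$ iff the circle is balanced, so $\phi$ is a gain function in $\fF^\times$ with $\cB(\bgr{\Phi}) = \cB(\Omega)$.

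For part (ii), the ``if'' direction: given $\phi\colon E(\Omega)\to\fF^+$ with balance class $\cB(\Omega)$, work in $\fF^{\{0\}\cup N}$ and represent $e_0$ and every half edge by $\bx_0$, each link $e_{uv}$ by $\bx_u - \bx_v + \phi(e_{uv})\bx_0$. The sum of edge vectors around a circle equals $\bigl(\sum_i\phi_i\bigr)\bx_0$, which produces the correct circuits of $L_0(\Omega)$: balanced circles are dependent among edges alone; unbalanced circles combine with $e_0$; two disjoint or joined unbalanced figures combine into a lift circuit without $e_0$ via the same mechanism. For the ``only if'' direction, extend to a representation of $L_0(\Omega)$, fix a spanning forest $T$ of $\|\Omega\|$, and observe that $T \cup \{e_0\}$ is a basis of $L_0(\Omega)$ (since $T$ is balanced and $e_0 \notin \bcl T = T$). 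After fixing $\bx_0$ to represent $e_0$, use the remaining coordinate freedom to rescale node vectors $\by_v$ so that each tree edge appears as $\by_u - \by_v$ with no $\bx_0$-component---this is exactly a switching along $T$, and is possible because $T$ is balanced. Each non-tree edge $e_{uv}$ then has coordinate vector $\by_u - \by_v + g\bx_0$ for a unique $g \in \fF$; define $\phi(e_{uv}) := g$ and $\phi \equiv 0$ on $T$. A cycle calculation dual to the one in the ``if'' direction confirms $\cB(\bgr{\Phi}) = \cB(\Omega)$.

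The main obstacle I anticipate is the ``only if'' direction of (ii): the tree-switching must be performed compatibly across each component, and one then has to verify that the residual $\bx_0$-coefficient on each non-tree edge is uniquely determined modulo the switchings of $\phi$ that do not alter $\bgr{\Phi}$. The analogous step in (i) is painless, because projective rescaling of the individual basis vectors $\bx_v$ is precisely switching for the multiplicative gain group $\fF^\times$; in the lift case the additive structure couples changes at the two endpoints of each tree edge, and that coupling must be tracked.
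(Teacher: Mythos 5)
Your proposal is correct in substance, but it takes a different route from the paper only in the sense that it does the work the paper delegates: the paper's entire proof of this theorem is a citation, reducing part (i) to Theorem IV.2.1 together with Proposition IV.2.4 and part (ii) to Theorem IV.4.1 together with Proposition IV.4.3, plus the remark that a vector representation over $\fF$ and a projective representation are interchangeable via central projection. What you have written is essentially a reconstruction of the proofs of those four cited results. Your constructions are exactly the canonical ones from Part IV: the frame representation $e_{uv}\mapsto \bx_u-\phi(e_{uv})\bx_v$ with half edges on the basis vectors, and the lift representation $e_{uv}\mapsto \bx_u-\bx_v+\phi(e_{uv})\bx_0$, which is (up to sign convention) Equation \eqref{E:liftrep} of this very paper. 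The telescoping cycle calculations you identify ($c_{i+1}=c_i\phi_i$ forcing $\phi_1\cdots\phi_l=1$ in the multiplicative case; the node terms cancelling to leave $(\sum_i\phi_i)\bx_0$ in the additive case) are the correct central mechanisms, and your extraction of gains in the converse directions---normalizing against the half-edge frame in (i), and against $\bx_0$ plus a switched spanning forest in (ii)---is how the cited propositions proceed. What the citation buys the paper is that all the routine verification is already done elsewhere; what your self-contained version buys is independence from Part IV, at the cost of having to carry out that verification.

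Two places in your sketch would need to be filled in to be a complete proof rather than an outline. First, in both ``if'' directions, checking that the listed circuits become projective dependencies is not by itself enough to conclude you have a matroid representation; you must also show independent sets stay independent, which is usually done by verifying that the rank of the image of an arbitrary finite edge set equals $\#N(S)-b(S)$ (frame case) or $\#N(S)-c(S)+[S\text{ unbalanced or }e_0\in S]$ (lift case). Second, in the ``only if'' direction of (ii), the step ``use the remaining coordinate freedom to rescale node vectors so that each tree edge appears as $\by_u-\by_v$'' presupposes node vectors that the representation does not directly supply; one has to manufacture them component by component from the representation of the contraction $L_0(\Omega)/e_0\cong G(\|\Omega\|)$, exactly as in the inductive construction of Lemma \ref{L:graphicrep} of this paper. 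You correctly flag this as the delicate point; it is resolvable, but it is genuine work rather than mere bookkeeping.
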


\begin{proof}
The first part is a combination of Theorem IV.2.1 and Proposition IV.2.4.  The former says that gains in $\fF^\times$ imply the existence of a canonical frame-matroid representation by vectors over $\fF$, which is equivalent (by projection) to a canonical projective representation; that gives sufficiency.  The latter says that a representation of $\G(\Omega)$ by vectors over $\fF$ implies that $\Omega$ can be given gains in $\fF^\times$.  Since a canonical representation of $G(\Omega)$ is by definition a representation of $\G(\Omega)$ (restricted to $\Omega$), we have necessity.

The second part is a combination of Theorem IV.4.1 and Proposition IV.4.3, which are the lift-matroid analogs of the results used for (i).  The matroid $\G(\Omega)$ is replaced by the extended lift matroid $L_0(\Omega)$.
\end{proof}

%%%
\subsubsection{Non-projective matroids of biased graphs}\label{nonprojective}\

These theorems are the tools we can use to prove that biased-graphic matroids exist of all ranks $n \geq 4$ that cannot be embedded in any projective geometry $\bbP$.  First consider a simple biased-graphic matroid $M\full = \G(\Omega)$ or $L_0(\Omega)$ (both of which have rank $n := \#N$, which may be infinite).  Since $\bbP$ is Desarguesian, i.e., it has coordinates in a skew field $\fF$, by Theorem \ref{T:canonicalrep} $M\full$ can be embedded in $\bbP$ if and only if there is a gain graph $\Phi$ with gains in $\fF^\times$ or $\fF^+$, respectively.  So, any biased graph of rank at least 4 that cannot be given gains in any group will yield two non-projective matroids.  One such graph is Example \ref{I.5.8} below.  Infinitely many non-gainable biased graphs of any finite or infinite order $n\geq4$ can be constructed by methods in \cite{AMQ}, for instance (for finite $n$) by taking a biased expansion of $C_n$ obtained from an $n$-ary quasigroup that has no factorizations, or (for any infinite or large enough finite $n$) by gluing together biased expansion graphs along edges as in \cite[Section 5, especially Corollary 5.5]{AMQ}.

\begin{ex}[Example I.5.8]\label{I.5.8}
The underlying graph is $2C_4$, which consists of four nodes $v_i$ and four double edges, $e_{i-1,i}$ and $f_{i-1,i}$, for $1\leq i\leq 4$, arranged in a quadrilateral (the subscripts are modulo 4).  The balanced circles are $e_{12}e_{23}e_{34}e_{41}$, $f_{12}f_{23}f_{34}f_{41}$, and $f_{12}f_{23}e_{34}e_{41}$.  Example I.5.8 proves that there are no gains for this biased graph (and furthermore it is minor-minimal with that property).
\end{ex}

If we prefer non-projective matroids $G(\Omega)$ or $L(\Omega)$ of biased graphs without half edges, then using thick biased graphs gives matroids that, by Theorem \ref{T:thick}, have only canonical representations.  All the biased graphs mentioned in the previous paragraph are thick, so even without half edges their matroids are not projective.

%%%%%%%%%%%%%%%%%%%%%%%%%%%%%%%%%%%%%%%%%%%%%%%%%%%%%

\pagebreak[2]\section{Menel{\ae}an and Cevian representations of the frame matroid} \label{mencev}

Section IV.2 developed generalizations of the classical theorems of Menelaus and Ceva by means of frame-matroid representations in Desarguesian projective spaces $\bbP(\fF)$ of gain graphs with gains in the multiplicative group of the skew field $\fF$.  Now we give synthetic generalizations of those representations, which apply to any biased graph and any projective geometry, Desarguesian or not.

We begin by stating the classical results, for which one may refer, e.g., to \cite{Alt} or \cite{Cox}.  Both theorems concern a triangle $ABC$ in the affine plane $\bbA^2(\bbR)$.  We consider the edge lines ${AB}$ et al., points $P\in{AB}$, $Q\in{BC}$, $R\in{CA}$, and the signed distances $\overset{\longrightarrow}{AP}, \overset{\longrightarrow}{BP}$, $\overset{\longrightarrow}{BQ}, \overset{\longrightarrow}{CQ}$, and $\overset{\longrightarrow}{CR}, \overset{\longrightarrow}{AR}$.  (For instance, $\overset{\longrightarrow}{AP}$ and $\overset{\longrightarrow}{BP}$ are positive when $P$ is inside the edge.)  Define $\phi(P):=\overset{\longrightarrow}{BP}/\overset{\longrightarrow}{AP}$, $\phi(Q):=\overset{\longrightarrow}{CQ}/\overset{\longrightarrow}{BQ}$, and $\phi(R):=\overset{\longrightarrow}{AR}/\overset{\longrightarrow}{CR}$.

\begin{thm}[Menelaus]\label{T:Men}
The points $P, Q, R$ are collinear if and only if $\phi(P)\phi(Q)\phi(R) = -1$.
\end{thm}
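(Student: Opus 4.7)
The plan is to reduce the assertion to a routine polynomial identity after introducing affine coordinates; the argument is classical, and the only point requiring care is the sign convention.

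First I would place $A$ at the origin and choose the vectors $b := B - A$ and $c := C - A$ as a basis for the underlying $2$-dimensional real vector space. Since $P \in AB$, $Q \in BC$ and $R \in CA$, and assuming each point is distinct from the two vertices of its edge, there are scalars $p, q, r \in \bbR \setminus \{0, 1\}$ with
$$
P = A + p\,b, \qquad Q = B + q\,(c - b), \qquad R = C - r\,c.
$$
Under the convention stated in the excerpt---that $\overset{\longrightarrow}{AP}$ and $\overset{\longrightarrow}{BP}$ are \emph{both} positive when $P$ is interior to $AB$---the scalar ratios become $\phi(P) = (1-p)/p$, $\phi(Q) = (1-q)/q$, and $\phi(R) = (1-r)/r$.

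Next, the collinearity of $P, Q, R$ in the plane is equivalent to vanishing of the $2\times 2$ determinant whose rows are the $\{b, c\}$-coordinates of $Q - P$ and $R - P$. Direct computation gives $Q - P = (1 - p - q)\,b + q\,c$ and $R - P = -p\,b + (1 - r)\,c$, so the collinearity condition is $(1 - p - q)(1 - r) + pq = 0$, which rearranges to the symmetric identity
$$
pq + qr + rp - p - q - r + 1 = 0.
$$
To conclude, I would expand $(1 - p)(1 - q)(1 - r) + pqr$ and observe that it equals exactly the left-hand side above. Hence collinearity is equivalent to $(1 - p)(1 - q)(1 - r) = -pqr$, which, after dividing by $pqr$, is precisely $\phi(P)\phi(Q)\phi(R) = -1$.

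The only real obstacle is bookkeeping of signs: the convention here differs from the commonly used one in which $\overset{\longrightarrow}{AP} + \overset{\longrightarrow}{PB} = \overset{\longrightarrow}{AB}$, and conflating the two would flip each $\phi$-value and land the argument on $+1$ in place of $-1$. A purely synthetic alternative, which drops lines from $A, B, C$ parallel to a direction transverse to the would-be transversal and uses similar triangles to express each $\phi$ as a ratio of the three signed parallel segments, produces the same algebraic identity; the sign $-1$ then arises from the elementary topological fact that a line meets the three extended side-lines of a triangle in an odd number of interior-edge points.
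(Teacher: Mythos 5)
Your main computation is correct and complete: with $A$ at the origin, $P=A+pb$, $Q=B+q(c-b)$, $R=C-rc$, the paper's convention does give $\phi(P)=(1-p)/p$, $\phi(Q)=(1-q)/q$, $\phi(R)=(1-r)/r$, the collinearity determinant is $(1-p-q)(1-r)+pq$, and this expands to $(1-p)(1-q)(1-r)+pqr$, yielding the equivalence with $\phi(P)\phi(Q)\phi(R)=-1$ under your standing nondegeneracy assumption $p,q,r\notin\{0,1\}$. There is nothing in the paper to compare this against: Theorem \ref{T:Men} is quoted as classical background with the reader referred to \cite{Alt} and \cite{Cox}, and no proof is given. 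Your attention to the sign convention is exactly the right instinct here, since reading the same quotient $\overset{\longrightarrow}{BP}/\overset{\longrightarrow}{AP}$ with a single orientation of the line negates each factor and lands the product on $+1$; the paper's ``both positive when interior'' convention is what makes the statement mesh with the gain-graph reformulation (gains $-\phi$ for Menelaus) that follows it.

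One small correction to your closing aside about the synthetic variant: a transversal not through a vertex meets the three side-lines in an \emph{even} number (zero or two) of points interior to the closed edges, hence an \emph{odd} number of exterior ones. Since $\phi$ is positive exactly at interior points, it is that odd count of negative factors that forces the product to be negative. This slip occurs only in a remark about an alternative argument and does not affect the validity of your coordinate proof.
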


\begin{thm}[Ceva]\label{T:Cev}
The lines $PC, QA, RB$ are concurrent if and only if $\phi(P)\phi(Q)\phi(R) = +1$.
\end{thm}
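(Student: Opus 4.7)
Since Menelaus's theorem (Theorem~\ref{T:Men}) is already at hand, the natural plan is to deduce Ceva's theorem from it by a double application of Menelaus to a pair of auxiliary triangles that share the cevian $CP$ as a common side.

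Assume first that the cevians $PC$, $QA$, $RB$ meet at a common point $X$. I would apply Menelaus to the triangle $APC$ cut by the transversal through $B,X,R$ (with $B$ on line $AP$, $X$ on $PC$, and $R$ on $CA$), and then to the triangle $BPC$ cut by the transversal through $A,X,Q$ (with $A$ on line $BP$, $X$ on $PC$, and $Q$ on $CB$). Each application gives a signed-ratio product equal to $-1$. The factors $\overrightarrow{PX}/\overrightarrow{CX}$ along the shared side $PC$ appear identically in the two identities and cancel when one is divided by the other; the ratios along $AB$ recombine, via $\overrightarrow{AB} = -\overrightarrow{BA}$, into a sign multiple of $\phi(P)$; and the remaining factors are precisely $\phi(Q)$ and $\phi(R)$. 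The bookkeeping of signs then yields $\phi(P)\phi(Q)\phi(R) = +1$.

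For the converse, I would use the standard reconstruction trick: assume $\phi(P)\phi(Q)\phi(R) = +1$, set $X := CP \cap QA$, and define $R' := BX \cap CA$. Applying the forward direction to the concurrent cevians $PC$, $QA$, $BR'$ gives $\phi(P)\phi(Q)\phi(R') = +1$. Dividing by the hypothesis forces $\phi(R') = \phi(R)$; since the map $R \mapsto \phi(R)$ is injective on line $CA$, we conclude $R' = R$, so $BR$ also passes through $X$.

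The principal subtlety I anticipate is the degenerate configuration in which two cevians are parallel, corresponding to concurrence at an ideal point. My preferred remedy is to perform the entire argument inside the projective completion $\bbP^2(\bbR) = \bbA^2(\bbR) \cup h_\infty$, where ``concurrent'' simply means sharing a single projective point (ordinary or ideal) and the Menelaus identity applies uniformly; no separate parallel-lines case is then needed. A secondary point requiring care is the choice of orientations on lines $AB$, $BC$, $CA$ implicit in the definition of $\phi$; I would fix these once at the outset to avoid ambiguity in the sign tracking above.
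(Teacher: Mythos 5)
Your argument is correct in outline, but note that the paper itself gives no proof of this statement: Theorems \ref{T:Men} and \ref{T:Cev} are quoted as classical results with references to \cite{Alt} and \cite{Cox}, and their genuine generalizations in the paper are Theorems \ref{IV.2.10} and \ref{IV.2.18}, which are proved by entirely different (analytic, then synthetic) means. So there is no in-paper proof to compare against; what you propose is the standard classical derivation of Ceva from Menelaus, and it does work. The forward direction via Menelaus applied to triangles $APC$ and $BPC$ with transversals $BXR$ and $AXQ$ is sound: the two factors along the shared cevian $PC$ cancel on division, the two factors along $AB$ combine (with one sign flip from reversing the orientation of $AB$) into the $P$-ratio, and the remaining factors are the $Q$- and $R$-ratios. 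The converse by constructing $R':=BX\wedge CA$ and invoking injectivity of $R\mapsto\phi(R)$ on the line $CA$ is also standard and correct. Two points deserve explicit care if this were written out in full. First, the paper's sign convention is nonstandard: it declares both $\overset{\longrightarrow}{AP}$ and $\overset{\longrightarrow}{BP}$ positive when $P$ is interior to the segment, so each endpoint measures distance ``inward''; your sign bookkeeping must be done against that convention (under the usual single-orientation convention the same computation lands on the product being $-1$ for Ceva, which is why the paper's normalization matters). Second, passing to the projective completion handles concurrence at an ideal point only if you also extend $\phi$ to ideal points by $\phi=1$ (consistent with the paper's analytic formula, where $\phi(e_{vw})=1$ corresponds to the ideal point of $\hv\hw$); the Menelaus identity for a transversal meeting a side line at infinity needs that convention, or a short limiting/parallel-lines argument, to ``apply uniformly'' as you claim. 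With those two items pinned down, and the routine exclusion of degenerate configurations ($P$ equal to a vertex, $X$ on a side line), the proof is complete.
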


To convert these results to gain-graphic form, make a graph with nodes $v_A,v_B,v_C$ and edges $e(P)_{v_Av_B}$, $e(Q)_{v_Bv_C}$, $e(R)_{v_Cv_A}$.  Assign gains $\phi(e(P)_{v_Av_B}):=\phi(P)$, $\phi(e(Q)_{v_Bv_C}):=\phi(Q)$, and $\phi(e(R)_{v_Cv_A}):=\phi(R)$ for Ceva and their negatives for Menelaus.  Then the conditions of both theorems become balance of the triangle $e(P)e(Q)e(R)$ and the theorems become special cases of Theorems IV.2.10 (quoted in Theorem \ref{IV.2.10} below) and IV.2.18 (quoted in Theorem \ref{IV.2.18}), which generalize Menelaus and Ceva, respectively, to all dimensions and to arbitrary collections of points in the lines generated by a projective basis, using coordinates to construct the appropriate gains.  The arbitrary collection of points generates the frame matroid (Theorem IV.2.10) and the dual hyperplanes (defined in Section \ref{cev}) generate that same matroid (Theorem IV.2.18).  
(Less comprehensive higher-dimensional Menelaus theorems appeared previously in \cite{Bold, BN, Klein}.  Less comprehensive multidimensional Ceva theorems appeared in \cite{BN, Klein}.  Boldescu \cite{Bold} has a different generalization of Ceva that is not contained in Theorem IV.2.8.)  
Our objective here is to de-coordinatize the constructions and theorems in Part IV to obtain synthetic results.

We note that Menelaus and Ceva are traditionally stated by comparing the product of three of the distances to the product of the three complementary distances, which obscures their true gain-graphic nature.

%%%%%%%
\subsection{Menelaus:  points}\label{men}\

Suppose we have a set $\hN$ of independent points in a projective geometry $\bbP$ (which may be non-Desarguesian).  We call an \emph{edge line} any line of the form $pq$ for $p,q \in \hN$.  The union of all edge lines is $\hE\full(\hN)$; without $\hN$ it is $\hE(\hN) = \hE\full(\hN) \setminus \hN$.  The matroid $M(\hE\full(\hN))$, or any submatroid, is called a \emph{frame matroid} with $\hN$ as its \emph{frame}.  (This is an abstract frame matroid, not presented as the frame matroid $G(\Omega)$ of a biased graph.)

Now let us choose a set $\hE \subseteq \hE\full(\hN)$.  
Following \cite{FrM} we shall set up a biased graph $\Omega(\hN,\hE)=(N,E,\cB)$ whose node set $N$ and edge set $E$ are in one-to-one correspondence, respectively, with $\hN$ and $\hE$ but which are disjoint from $\hE\full(\hN)$ and from each other, and such that the full frame matroid $\G(\Omega)$ is naturally isomorphic to the projective frame matroid $M(\hN\cup\hE)$.  We call this representation of the frame matroid of $\Omega$ \emph{Menel{\ae}an} because it generalizes the theorem of Menelaus.

For the underlying graph $\Gamma=(N,E)$ we let $p\in \hE \setminus \hN$ correspond to a link $e$ whose endpoints $v,w \in N$ are chosen so $p\in \hv \hw$.  Then we write $p=\he$.  
(For $v\in N$ or $e\in E$, we write $\hv$ or $\he$ for the corresponding point in $\bbP$.)  A point $p\in \hE \cap \hN$ corresponds to a half edge at that node $v$ for which $p = \hv$.  
For the bias we need the notion of a \emph{cross-flat}.  This is any projective flat that is disjoint from $\hN$.  We define a circle $C$ in $\Gamma$ to be \emph{balanced} if $\hC$ lies in a cross-flat.  
(This differs from the definition of balance used in \cite{FrM}, but \cite[Lemma 4]{FrM} states that both definitions are equivalent.)  
That completes the definition of $\Omega(\hN,\hE)$.

\begin{thm} \label{TD:men}
{\rm(a)} For any $\hE\subseteq \hE\full(\hN)$, $\Omega(\hN,\hE)$ is a biased graph.  The natural correspondence $e \mapsto \he$ is an isomorphism $G(\Omega(\hN,\hE)) \cong M(\hE)$.

{\rm(b)} The closed, balanced sets of $G(\Omega(\hN,\hE))$ are the edge sets corresponding to the intersections of $\hE$ with the cross-flats of $\hN$.
\end{thm}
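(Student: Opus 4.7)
The plan is to establish (a) in two pieces---the biased-graph axiom and the matroid isomorphism---then deduce (b) from (a). For the biased-graph axiom, that no theta subgraph has exactly two balanced circles, a direct dimension argument on the cross-flats containing the two balanced circles works, but more economically it follows from \cite[Lemma 4]{FrM}, which verifies that the balance definition used there (equivalent to our cross-flat definition by that same lemma) yields a biased graph.

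For the matroid isomorphism $G(\Omega(\hN,\hE)) \cong M(\hE)$, I would compare rank functions under the bijection $e \mapsto \he$. The frame rank of $S \subseteq E$ is $\#N(S) - b(S)$. Writing $S$ as a disjoint union of connected subgraphs $S_i$ on node sets $X_i$, it suffices to show two dimension claims: (i) if $S_i$ is balanced, then $\Span(\hS_i)$ is a cross-flat of projective dimension $\#X_i - 2$, contributing projective rank $\#X_i - 1$; (ii) if $S_i$ is unbalanced, then $\Span(\hS_i) = \Span(\hat X_i)$, of dimension $\#X_i - 1$, contributing rank $\#X_i$. Because $\hN$ is independent, the frame flats $\Span(\hat X_i)$ sit in ``direct sum'' inside $\Span(\hN)$, so by modularity the projective ranks of the components add to $\#X - b(S)$, matching the frame rank.

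Claim (i) I would prove by taking a balanced spanning tree on $X_i$: its $\#X_i - 1$ edge-points are independent in $M(\hE)$ and, by induction on $\#X_i$, span a cross-flat hyperplane of $\Span(\hat X_i)$ of dimension $\#X_i - 2$, after which $\bcl$-closure forces the remaining edges of $S_i$ into this same span. Claim (ii) I would prove using the contrapositive of the cross-flat definition: an unbalanced circle $C \subseteq S_i$ has $\Span(\hC)$ meeting $\hat{N(C)}$ in some frame node, and iteratively adjoining edges of $S_i$ that cover a spanning tree on $X_i \setminus N(C)$ pushes the span up to the full $\Span(\hat X_i)$.

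Part (b) then follows from (a). Under the isomorphism, closed sets of $G(\Omega)$ correspond to intersections $\hE \cap T$ for projective flats $T$. If $S$ is closed and balanced, combining (i) with the direct-sum argument shows $T := \Span(\hS)$ avoids $\hN$, hence is a cross-flat. Conversely, for any cross-flat $T$, the set $\{e \in E : \he \in T\}$ is balanced (every circle in it has its image in the cross-flat $T$) and is closed by the isomorphism. The main obstacle I anticipate is claim (ii): turning ``unbalanced implies span meets $\hN$'' into the sharper assertion that a connected unbalanced subgraph's span is \emph{all} of the frame flat requires an inductive argument leveraging the theta property or inseparability of unbalanced blocks, with care to rule out an intermediate flat that contains only some of the frame nodes.
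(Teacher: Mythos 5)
Your proposal is correct, but it takes a genuinely different route from the paper: the paper's proof of (a) is a one-line citation of \cite[Theorem 1]{FrM} (whose proof is coordinate-free), and (b) is obtained by combining the resulting matroid isomorphism with \cite[Lemma 5]{FrM}, which states that $S$ is balanced iff $\Span\hS$ misses $\hN$. You instead reprove the matroid isomorphism directly by comparing rank functions component by component, and your claims (i) and (ii) are in effect a reproof of \cite[Lemma 5]{FrM}; the component-decomposition and direct-sum argument are sound, the spanning-tree independence and cross-flat claims go through by the leaf/walk-around-the-circle arguments you indicate, and your worry about claim (ii) resolves exactly as you suggest (one captured frame node of an unbalanced circle forces all of $\hat N(C)$, then tree edges propagate to all of $\hat X_i$; the half-edge case is immediate since $\he=\hv\in\hN$). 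What your approach buys is self-containedness and uniformity: it is essentially the same rank-codimension technique the paper uses for the Cevian and orthographic representations (Theorems \ref{T:cev} and \ref{TD:orthorep}), whereas the paper's citation buys brevity. Two small corrections: the biased-graph (linear-class) property should be attributed to \cite[Theorem 1]{FrM} rather than Lemma 4, which only establishes the equivalence of the two balance definitions; alternatively, once your rank function is shown to be a matroid rank of frame type, the linear-class property follows from Proposition II.3.15 as in the proof of Theorem \ref{TD:orthorep}, so you need not cite \cite{FrM} at all.
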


\begin{proof} 
For a synthetic proof of part (a), we note that it is precisely the statement of \cite[Theorem 1]{FrM}, whose proof is coordinate-free.

The cross-flat property (b) has two parts.  By the matroid isomorphism a subset $S$ of $E$ is closed if and only if $\hS$ is the intersection of $\hE$ with a flat of $\bbP$, and furthermore its closure in $\G(\Omega(\hN,\hE))$ corresponds to the intersection of $\hN \cup \hE$ with the projective closure $\Span\hS$.  Lemma 5 of \cite{FrM} states that $S$ is balanced if and only if $\Span\hS$ does not contain any element of $\hN$; in other words, $\hS$ spans a cross-flat.
\end{proof}

\begin{ex}\label{X:nonuniquegraph}
Notice that the construction of $\Omega(\hN,\hE)$ is carried out with respect to an independent set $\hN$ that is given in advance.  If $\hE$ is small, it may not determine $\hN$ uniquely, and different choices of $\hN$ may lead to biased graphs with different edge structure.  

For example, in a projective plane choose a set $\hE=\{p_1,\ldots,p_6\}$ of 6 points so that no three are collinear.    
Let $q_i:=p_{i}p_{i+1} \wedge p_{i+2}p_{i+3}$ (with subscripts modulo 6) and assume that $\hN:=\{q_1,q_3,q_5\}$ and $\hN':=\{q_2,q_4,q_6\}$ are noncollinear triples. (This is possible unless the plane has very small order; we omit the details.)  
Then $\hN$ and $\hN'$ are two different frames for $\hE$.  The edge set of both underlying graphs, $\Gamma$ and $\Gamma'$, is $E =\{e_1,\ldots,e_6\}$ with $p_i \leftrightarrow e_i$ but the edges are attached differently.  In the frame $\hN$ the node set is $N=\{v_1,v_3,v_5\}$ (the nodes correspond to $q_1,q_3,q_5$); since $p_1,p_2 \in q_1q_3$, $e_1$ and $e_2$ are parallel edges with endpoints $v_1,v_5$.  However, in the frame $\hN'$ with node set $N'=\{v_2,v_4,v_6\}$, $p_1 \in q_4q_6$ and $p_2 \in q_2q_6$ so $e_1$ has endpoints $v_4,v_6$, and $e_2$ has endpoints $v_2,v_6$; the same two edges are not parallel.  
Thus the underlying graphs, hence the biased graphs, have different edge structure with respect to the two different frames, so even ignoring the different node names, they are not the same graph.  
(The matroids in both cases are the same because they are the matroid of $\hE$.  The graphs happen to be isomorphic, but further discussion of that is outside our scope.)
\end{ex}

If $N$ is a set, $K_N$ denotes the complete graph with node set $N$.

\begin{cor} \label{PD:menall} 
$\Omega(\hN,\hE\full(\hN))$ is a full biased expansion of $K_N$.
\end{cor}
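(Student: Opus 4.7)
The plan is to verify directly that $\Omega := \Omega(\hN,\hE\full(\hN))$ meets the definition of a full biased expansion of $K_N$, using Theorem \ref{TD:men} as the translation device between the frame-matroid combinatorics of $\Omega$ and the projective geometry of $\hE\full(\hN)$.

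First I would describe $\|\Omega\|$: its node set is $N$; each $\hv\in\hN$ contributes a half edge at $v$ (making $\Omega$ full), and every point of a line $\hv\hw\setminus\{\hv,\hw\}$ (for $v\neq w$) contributes a link joining $v,w$. Axiom IV guarantees at least one such link for every pair, so the natural projection $p$ sending a link with endpoints $v,w$ to the edge $vw\in K_N$ is surjective and fixes the nodes.

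The main task is the unique-lift axiom. Fix a circle $C=e_1\cdots e_l$ in $K_N$ with $e_j=v_jv_{j+1}$ (cyclic indices), an index $i$, and lifts $\te_j\in p\inv(e_j)$ for $j\neq i$, corresponding to projective points $\he_j$. I observe that $P:=\{e_j:j\neq i\}$ is a spanning tree of $C$, hence balanced, and the frame-matroid rank formula gives $\rk P=l-1$; Theorem \ref{TD:men}(a) then says the points $\{\he_j:j\neq i\}$ are projectively independent, so $\Pi:=\Span\{\he_j:j\neq i\}$ is an $(l-2)$-flat, a hyperplane in the $(l-1)$-flat $F:=\Span\{\hat v_1,\dots,\hat v_l\}$. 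To see $\Pi$ avoids $\hN$, I check that for every $v\in N$ the set $\{e_v\}\cup P$ has frame rank $l$ (the half edge either unbalances the unique path-component or creates a new unbalanced component), hence is matroid-independent; the matroid isomorphism then gives $\hv\notin\Pi$, so $\Pi$ is a cross-flat. Since $\hat v_i\notin\Pi$ and $\Pi$ is a hyperplane of $F$, the line $\hat v_i\hat v_{i+1}\subseteq F$ meets $\Pi$ in exactly one point $\he_i$; independence of $\hN$ keeps $\he_i$ out of $\hN$, so $\he_i$ represents a genuine link $\te_i\in p\inv(e_i)$. All $l$ points $\he_1,\dots,\he_l$ now lie in the cross-flat $\Pi$, so $\te_1\cdots\te_l$ is balanced. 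For uniqueness, any alternative lift $\te_i'$ would span together with the remaining $\he_j$'s a cross-flat $\Sigma\supseteq\Pi$ inside $F$; since a cross-flat cannot reach dimension $l-1$ (it would then equal $F\supseteq\hN$), $\Sigma=\Pi$, forcing $\he_i'\in\Pi\cap\hat v_i\hat v_{i+1}=\{\he_i\}$.

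The main obstacle is the twin dimension count that produces $\Pi$: both $\dim\Pi=l-2$ and $\Pi\cap\hN=\eset$ must be extracted from the frame-matroid rank formula together with Theorem \ref{TD:men}(a). Once those are in hand, the rest follows by the modular law of projective geometry and the independence of $\hN$.
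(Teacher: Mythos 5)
Your proof is correct and follows essentially the same route as the paper's: both use the matroid isomorphism of Theorem \ref{TD:men} to see that the lifted path spans an $(l-2)$-flat, observe that this flat is a cross-flat, and apply the modular law to cut the remaining edge line in a single point, which then gives both existence and uniqueness of the balancing edge. The only cosmetic differences are that you verify $\Span\hP\cap\hN=\eset$ by a half-edge rank computation rather than by citing the balance criterion of Theorem \ref{TD:men}(b), and you write out the uniqueness step in slightly more detail than the paper does.
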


\begin{proof}
The projection $p:\Omega(\hN,\hE\full(\hN)) \to K_N$ is obvious.  We have to show that, for any circle $C=e_1 \cdots e_l$ in $K_N$ and any $\te_1 \in p\inv(e_1), \ldots, \te_{l-1} \in p\inv(e_{l-1})$, there is exactly one $\te_l \in p\inv(e_l)$ that makes the circle $\te_1 \cdots \te_l$ balanced.

Let $P=\te_1 \cdots \te_{l-1}$, with endpoints $v$ and $w$ (which are also the endpoints of $e_l$).  Then $\rk(\hP) =l-1$ and $\Span(\hP \cup \hv\hw) = \Span \hN(C)$.  (By $N(C)$ we mean the node set of $C$ and by $\hN(C)$ we mean the corresponding projective point set.)  
So, 
\begin{align*} 
\rk((\Span{\hP}) \wedge \hv \hw) 
&= \rk(\hP) + \rk(\hv \hw) - \rk((\Span{\hP}) \vee \hv \hw) \\
&= (l-1) + 2 - \rk(\hP \cup \hv \hw ) = 1.
\end{align*}
That is, there is a unique point $q\in (\Span{\hP}) \cap \hv\hw$.  This point cannot be in $\hN$, for $(\Span{\hP}) \cap \hN = \eset$ by balance of $P$.  Therefore $q=\hf $ for a link $f \in p\inv(e_l)$.  Note that we have proved $\Span{\hP}$ to be a cross-flat.  

We now have a circle $P \cup \{f\}$, and we know that $\Span(\hP \cup \{\hf\}) = \Span{\hP}$, a cross-flat.  Therefore $P \cup \{f\}$ is balanced, so we may take $\te_l = f$.  There can be no other choice for $\te_l$, because $q$ is the unique point in $(\Span{\hP}) \cap \hv \hw$. 
\end{proof}

Finally, we show that our synthetic Menel{\ae}an representation coincides with the analytic one presented in Section IV.2.5 when the latter is defined.  
Suppose $\Phi=(N,E,\phi)$ is a gain graph with gains in $\fF^\times$.  
In a coordinatized projective space $\bbP_N(\fF)$ (derived from $\fF^N$) there is the natural basis $\hN := \{\hv=[0,\ldots,0,1_v,0,\ldots,0]: v \in N \}$.  Choose the hyperplane $\sum_{v\in N} x_v=0$ to be the ideal hyperplane $h_\infty$.  
The analytic definition of a Menel{\ae}an representation of $\Phi$, from Equation (IV.2.3), is 
\begin{align*} %\label{E:menelpt}
\he &= \hv \quad\text{ for a half edge at $v$,} \\
\he &= 
  \left.\begin{cases} 
\dfrac {1}{1-\phi(e_{vw})}(\hv - \phi(e_{vw}) \hw) &\text{if } \phi(e_{vw}) \neq 1,  \\[10pt] 
h_\infty \wedge \hv\hw &\text{if } \phi(e_{vw}) = 1
  \end{cases} \right\}\text{ for a link $e=e_{vw}$.}
\end{align*}
Then $\cM[\Phi] = \{ \he : e \in E \}$ is the (analytic) projective Menel{\ae}an representation of $G(\Phi)$.  

\begin{thm}\label{T:menequiv}
Consider a gain graph $\Phi=(N,E,\phi)$ with gains in the multiplicative group of a skew field $\fF$, and the Desarguesian projective space $\bbP_N(\fF)$ coordinatized by $\fF$.  
Then $\cM[\Phi]$ is a synthetic Menel{\ae}an representation of $G(\Phi)$.  Stated precisely, the mappings $v \mapsto \hv$ and $e \mapsto \he$ are isomorphisms $\bgr{\Phi} \cong \Omega(\cM[\Phi])$ and $G(\Phi) \cong M(\cM[\Phi])$.
\end{thm}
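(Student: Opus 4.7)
The plan is to establish the biased-graph isomorphism directly, and then deduce the matroid isomorphism from Theorem \ref{TD:men}(a) applied to $\Omega(\hN,\cM[\Phi])$. First I verify that $\hN = \{\hv: v\in N\}$ is the standard projective basis of $\bbP_N(\fF)$ (hence independent), and that the analytic formula places each $\he$ on the expected edge line or node: a half edge at $v$ maps to $\hv \in \hN$; a link $e_{vw}$ maps to a point lying on $\hv\hw$, whether $\phi(e_{vw}) \neq 1$ (by inspection of the formula) or $\phi(e_{vw}) = 1$ (then $\he = h_\infty \wedge \hv\hw$, still on $\hv\hw$). With the natural choice of endpoints for each $\he$, the underlying graph of $\Omega(\hN,\cM[\Phi])$ is identified with $\|\Phi\|$.

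The substantive step is to match the two notions of balance. Let $C = e_1\cdots e_l$ be a circle of $\|\Phi\|$ with consecutive nodes $v_1,\ldots,v_l$ (indices modulo $l$) and gains $\phi_i := \phi(e_{v_iv_{i+1}})$. The key device is a uniform vector representative $\be_i := \hv_i - \phi_i\hv_{i+1}$ of $\he_i$: when $\phi_i \neq 1$ this differs from the analytic formula only by the nonzero scalar $1-\phi_i$, and when $\phi_i = 1$ the vector $\hv_i - \hv_{i+1}$ has coordinate sum zero and so represents exactly $h_\infty \wedge \hv_i\hv_{i+1}$. The rearrangement
\[
\sum_{i=1}^l c_i \be_i \;=\; \sum_{i=1}^l (c_i - c_{i-1}\phi_{i-1})\hv_i
\]
(with cyclic indices) then reduces everything to linear algebra in the independent set $\{\hv_i\}$.

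Vanishing of this sum forces the recursion $c_i = c_{i-1}\phi_{i-1}$ around the cycle, which admits a nonzero solution iff the cyclic product $\phi_1\phi_2\cdots\phi_l = \phi(C)$ equals $1$; hence $\{\be_i\}$ is dependent exactly when $C$ is $\Phi$-balanced. A nearly identical analysis, now setting the right-hand side equal to $\alpha \hv_k$, produces the equation $c_k(1 - \phi_k\phi_{k+1}\cdots\phi_{k-1}) = \alpha$, whose cyclic product is a conjugate of $\phi(C)$ and so equals $1$ iff $\phi(C) = 1$. Thus when $\phi(C) \neq 1$ the span of $\hC$ contains every $\hv_k$ with $k \in N(C)$, while when $\phi(C) = 1$ no $\hv_k$ with $k\in N(C)$ is in that span. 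Since $\Span\hC \subseteq \Span(\hv_1,\ldots,\hv_l)$, no other elements of $\hN$ can appear either, so $\Span\hC$ is a cross-flat precisely when $\phi(C) = 1$. This is the required equivalence of the two notions of balance.

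Combining the two paragraphs, the mappings $v \mapsto \hv$ and $e \mapsto \he$ constitute a biased-graph isomorphism $\bgr{\Phi} \to \Omega(\hN,\cM[\Phi])$, and the matroid isomorphism $G(\Phi) \cong M(\cM[\Phi])$ then follows immediately from Theorem \ref{TD:men}(a) applied to $\Omega(\hN,\cM[\Phi])$. The main obstacle is purely bookkeeping: arranging the cyclic indexing of the $\phi_i$ and $c_i$ around $C$ so that the telescoping computation closes correctly, and handling the apparently case-split analytic formula. The uniform representative $\be_i = \hv_i - \phi_i\hv_{i+1}$ dissolves both difficulties at once, and in a skew field the inverse $(1-\phi_k\cdots\phi_{k-1})^{-1}$ needed to solve for $c_k$ exists precisely when $\phi(C) \neq 1$, so the non-abelian case needs no separate treatment.
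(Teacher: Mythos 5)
Your proof is correct, but it takes a genuinely different route from the paper's. The paper reduces the whole question to the Generalized Theorem of Menelaus from Part IV (quoted as Theorem \ref{IV.2.10}): part (b) of that theorem says directly that $\Span\hS$ contains a basis point $\hv$ exactly when $v \in N_0(S)$, which immediately matches the cross-flat criterion of Theorem \ref{TD:men} with gain-graph balance. You instead re-derive the needed special case of that statement from scratch: the uniform representative $\be_i = \hv_i - \phi_i\hv_{i+1}$ (absorbing the $\phi=1$ case into the generic formula), the telescoping expansion over the independent set $\hN$, and the cyclic recursion $c_i = c_{i-1}\phi_{i-1}$ whose solvability is governed by $\phi(C)$ and its conjugates. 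Your computation is sound, including the noncommutative bookkeeping (scalars on the left, conjugate cyclic products), and the logical frame is the same as the paper's: establish the biased-graph isomorphism first, then pull the matroid isomorphism from Theorem \ref{TD:men}(a). What your version buys is self-containedness --- it removes the one remaining dependence on Part IV in this argument, which is consonant with the paper's stated aim of a coordinate-light redevelopment, though at the cost of reproving (for circles only) a fragment of Theorem IV.2.10. What the paper's version buys is brevity and an explicit bridge between the synthetic construction and the already-established analytic lattice isomorphism, which is after all the content of the equivalence being asserted. One small point to make explicit if you write this up: balance in $\Omega(\hN,\hE)$ is defined only on circles, and a biased graph is determined by its balanced circles, so matching balance circle-by-circle (as you do) indeed suffices; you need not separately compare balance of general edge sets.
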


\begin{proof}
If we show that $\Omega(\cM[\Phi])$ is naturally isomorphic to $\bgr{\Phi}$ (by the correspondences $\hv \leftrightarrow v$ and $\he \leftrightarrow e$), then the matroids are also naturally isomorphic.  The underlying graphs are clearly isomorphic.  We have to show that corresponding edge sets are balanced in both.  
We employ the Generalized Theorem of Menelaus from Part IV, which we quote:

\begin{thm}[Theorem IV.2.10]\label{IV.2.10}
{\rm(a)} The set of flats spanned by $\cM[\Phi]$ is isomorphic to $\Lat G(\Phi)$ under the isomorphism induced by $e \mapsto \be$.
{\rm(b)} If $S \subseteq E$ and $W \subseteq N$, the flat generated by the points $\be$ for $e \in S$ contains $\Span(\hat{W})$ if and only if $W \subseteq N_0(S)$.
\end{thm}

In $\Omega(\cM[\Phi])$ an edge set $\hS$ is balanced if and only if its span is a cross-flat, i.e., it contains none of the basis points $\hv \in \hN$.  Theorem \ref{IV.2.10}(b) implies that a flat spanned by $\hS \subseteq \cM[\Phi]$, corresponding to $S \subseteq E(\Phi)$, is a cross-flat if and only if $S$ is balanced.  This is the same criterion for balance as that in Theorem \ref{TD:men}, given that an edge set is balanced if and only if its matroid closure is balanced 
and that, by Theorem \ref{IV.2.10}(a), $\cM[\Phi]$ represents $G(\Phi)$.  We have shown that the criterion for balance in $\Omega(\cM[\Phi])$ is equivalent to that in $\Phi$; consequently, $\Omega(\cM[\Phi])$ and $\bgr{\Phi}$ are naturally isomorphic.
\end{proof}

A special case of Theorem \ref{T:menequiv} is that, when $\bbP$ is Desarguesian over a skew field $\fF$, the biased graph $\Omega(\hN,\hE\full(\hN))$ of Corollary \ref{PD:menall} is the biased graph of the full group expansion $\fF^\times K_N\full$.  

A more general way biased expansions appear is set out in the next result.
Let $\Delta\subseteq K_N$ be a simple graph on node set $N$.  By an \emph{edge line of $\Delta$} we mean an edge line $l_e$ of $\hN$ that corresponds to an edge $e$ of $\Delta$.  Let $\hE(\Delta)$ be the union of the edge lines of $\Delta$.  
Call $\hE\subseteq \hE(\hN)$ \emph{cross-closed with respect to $\Delta$} if for any cross-flat $t$ generated by points in $\hE$, $t \cap \hE(\Delta) \subseteq \hE$.  
(Equivalently, for any edge line $l_e$ of $\Delta$, $t \cap l_e \subseteq \hE$.)

\begin{cor}  \label{CD:mencross}
{\rm(I)}  Suppose $\hE\subseteq\hE(\hN)$ is contained in the union of edge lines of $\Delta$.  Then $\hE$ is cross-closed with respect to $\Delta$ if and only if $\Omega(\hN,\hE)$ is a biased expansion of a closed subgraph $\Delta'$ of $\Delta$. 

{\rm(II)}  Suppose further that $\Delta$ is inseparable and has order at least $3$ and that $\bbP$ is coordinatized by a skew field $\fF$.  Then $\hE$ is cross-closed with respect to $\Delta$ if and only if $\Omega(\hN,\hE) = \bgr{\fH\Delta'}$, where $\Delta'$ is some closed subgraph of $\Delta$ and $\fH$ is some subgroup of $\fF^\times$.
\end{cor}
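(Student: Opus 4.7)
The plan is to derive (I) directly from the cross-flat characterization of balance in Theorem~\ref{TD:men}, mirroring the rank argument of Corollary~\ref{PD:menall}, and then to extract (II) from (I) via Theorem~\ref{T:menequiv} and Lemma~\ref{L:subgroupexp}.

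For the forward direction of (I), let $\Delta'$ be the subgraph of $\Delta$ whose edges $e$ satisfy $l_e \cap \hE \neq \eset$, and let $p\colon \Omega(\hN,\hE) \to \Delta'$ send each $\tilde e$ to the unique $\Delta$-edge whose line contains $\hat{\tilde e}$. Given a circle $C = e_1\cdots e_l$ in $\Delta'$ with $e_l$ having endpoints $v,w$, and a partial lift $\tilde e_1,\ldots,\tilde e_{l-1}$ forming a path $P$ from $v$ to $w$, I reuse the modular-law rank computation in the proof of Corollary~\ref{PD:menall}: $P$ is balanced, so $\Span \hat P$ is a cross-flat of rank $l-1$ meeting $l_{e_l}$ in a unique point $q \notin \hN$. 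Because $l_{e_l}$ is an edge line of $\Delta$, cross-closure of $\hE$ forces $q \in \hE$, giving the required unique balanced extension $\tilde e_l$. Closedness of $\Delta'$ in $\Delta$ follows by the same move: any path in $\Delta'$ joining the endpoints of some $e \in E(\Delta)$ lifts to a balanced path in $\Omega(\hN,\hE)$ whose span is a cross-flat intersecting $l_e$ in a point, and cross-closure places that point in $\hE$, so $e \in \Delta'$.

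For the reverse direction of (I), assume $\Omega(\hN,\hE)$ is a biased expansion of a closed subgraph $\Delta'$ of $\Delta$, and suppose $q \in t \cap l_e$ for some cross-flat $t$ spanned by points of $\hE$, some $e \in E(\Delta)$, and $q \notin \hN$. By Theorem~\ref{TD:men}(b), $t \cap \hE$ corresponds to a closed balanced edge set $S_0$ in $\Omega(\hN,\hE)$ with $\Span \hat{S_0} = t$. In the ambient full frame matroid $G(\Omega(\hN,\hE\full(\hN)))$ (Corollary~\ref{PD:menall}), the edge $\tilde q$ with $\hat{\tilde q} = q$ lies in $\bcl S_0$, so there is a balanced circle $P \cup \{\tilde q\}$ with $P \subseteq S_0 \subseteq \hE$ a path from $v$ to $w$ in $\Omega(\hN,\hE)$. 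Projecting, $p(P)$ is a path in $\Delta'$ joining the endpoints of $e$; closedness of $\Delta'$ gives $e \in \Delta'$. The biased expansion property then yields a unique $\tilde e \in \hE$ over $e$ extending $P$ to a balanced circle, and the uniqueness of $\Span \hat P \cap l_e$ forces $\hat{\tilde e} = q$, so $q \in \hE$.

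For Part (II), the reverse implication is immediate from (I), since $\bgr{\fH\Delta'}$ is a biased expansion of the closed subgraph $\Delta'$. For the forward implication, Part (I) gives that $\Omega(\hN,\hE)$ is a biased expansion of closed $\Delta'$, and Theorem~\ref{T:menequiv} identifies $\Omega(\hN,\hE\full(\hN))$ with $\bgr{\fF^\times K_N\full}$, so $\Omega(\hN,\hE) = \bgr{\Phi}$ for a gain subgraph $\Phi$ of $\fF^\times K_N\full$ with underlying graph $\Delta'$ and gains in $\fF^\times$. Applying Lemma~\ref{L:subgroupexp} then converts $\Phi$, up to switching, into $\fH\Delta'$ for a subgroup $\fH \leq \fF^\times$. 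The main obstacle I anticipate is obtaining a single subgroup $\fH$ uniformly over $\Delta'$ when $\Delta'$ is not itself inseparable of order at least $3$; the inseparability of the ambient $\Delta$ should force compatibility through triangles of $\Delta$ that meet each block of $\Delta'$, so that the local subgroups produced by Lemma~\ref{L:subgroupexp} coincide.
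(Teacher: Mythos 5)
Your argument for (I) follows the paper's proof essentially step for step: the same definition of $\Delta'$ via $E' = \{e \in E(\Delta) : l_e \cap \hE \neq \eset\}$, the same cross-flat--spanned-by-a-lifted-path computation (with cross-closure supplying membership of the intersection point) for both closedness of $\Delta'$ and the expansion property, and the same projection-plus-uniqueness argument for the converse, where your use of $\bcl S_0$ inside the full expansion in place of the paper's choice of a minimal spanning subset $Q$ of $t$ is an immaterial variation. For (II) the paper's entire proof is the single sentence that it follows from (I) and Lemma \ref{L:subgroupexp}, so the obstacle you flag---that Lemma \ref{L:subgroupexp} requires $\Delta'$ itself, not merely the ambient $\Delta$, to be inseparable of order at least $3$---is not addressed by the paper either; you are right that it is a genuine issue when $\Delta'$ is a proper closed subgraph, and your sketched repair is no less complete than what the paper offers.
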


\begin{proof}  
We prove (I).  Let $\tE'$ denote the edge set of $\Omega(\hN,\hE)$.  

To prove necessity, suppose $\hE$ is cross-closed with respect to $\Delta$.  
We identify $\Delta'$ by its edge set $E' := \{ e \in E(\Delta) : \hE \cap l_e \neq \eset \}$.
If $t$ is a cross-flat, we identify a subgraph $\Delta_t$ (with node set $N$) by its edge set $E_t := \{ e \in E(\Delta) : t \cap l_e \cap \hE \neq \eset \}$.  Evidently, $\Delta' = \bigcup_t \Delta_t$, where the union is taken over all cross-flats generated by finite subsets of $\hE$.

The first step is to prove $E'$ is closed in $\Gamma$.  
Let $P=e_1e_2\cdots e_k$ be a path in $\Delta'$ with edges $e_i=e_{v_{i-1}v_i}$, whose endpoints $v_0,v_k$ are joined by an edge $e$ of $\Delta$ so that $e \in \clos P$, and choose a point $q_i \in l_{e_i} \cap \hE$ for $i=1,\ldots,k$.  Define $t:=\Span\{q_1,\ldots,q_k\}$; $t$ is a cross-flat because its dimension is (at most) $k-1$ but if it contained any point $\hv$ it would span all of $\hv_0,\hv_1,\ldots,\hv_k$ and have dimension $k$.  Then $P \subseteq E_t$ and $t \wedge l_{e_i} = q_i$.  Define $u:=\Span\{\hv_0,\ldots,\hv_k\}$, of dimension $k$.  $t$ is a proper subspace of $u$ with dimension $k-1$ because $t \vee \hv_0 = u$.  The edge line $l_e$ is a 1-dimensional subspace of $u$.  Thus $t \cap l_e$ is a point, which is in $\hE$ by cross-closure; it follows that $e \in E_t \subseteq E'$.  This proves that $E'$ is closed in $\Delta$.

Now we prove that $\Omega(\hN,\hE)$ is a biased expansion of $\Delta'$; it is similar to the preceding but more involved.  
Suppose we have $P$ and $e$ in $E'$ as before and arbitrary edges $\te_i \in p\inv(e_i)\cap \tE'$ for $i=1,\ldots,k$.  These edges correspond to points $q_i \in \hE \cap l_{e_i}$.  The flat $t$ spanned by $q_1,\ldots,q_k$ is a cross-flat that intersects $l_e$ in a point $q_e$, and by cross-closure $q_e \in \hE$.  In $\Omega(\hN,\hE)$, $q_e$ corresponds to an edge $\te \in p\inv(e)$ that is in $\tE'$ because $q_e\in\hE$ and that forms a balanced circle with $\te_1,\ldots,\te_k$ because $t$ is a cross-flat.  Clearly, $q_e$ is uniquely determined by $q_1,\ldots,q_k$, so the requirement for a biased expansion that a unique edge in the fiber $p\inv(e)$ forms a balanced circle in $\Omega(\hN,\hE)$ is satisfied.  We conclude that $\Omega(\hN,\hE) \downa \Delta'$.

For sufficiency suppose $\hE$ has the property that $\Omega(\hN,\hE)$ is a biased expansion of a closed subgraph $\Delta' \subseteq \Delta$.  
Let a point set $Q \subseteq \hE$ span a cross-flat $t$ such that $t \cap l_e$, for some $e \in E(\Delta)$, is a point $q \in \hE$ but not in $t$; we show this leads to a contradiction.  
We may assume $Q$ is a minimal subset of $t$ that spans $q$, so $Q \cup \{q\}$ is a minimal dependent set and thus finite.  Let $Q = \{q_1,\ldots,q_k\}$, let $q_i$ correspond to $\te_i \in \hE$, and let $e_i=p(\te_i)$; then $e_i \in E(\Delta')$ by the definition of $\hE$.  Since $q\in l_e$, $q$ corresponds to some edge $\te \in \tE'$ such that $p(\te) = e$.  It follows (since $t \cap \hN = \eset$) that $\{\te_1,\ldots,\te_k,\te\}$ is a frame circuit in $\Omega(\hN,\hE)$.  It is also balanced because $Q \cup \{e\}$ lies in the cross-flat $t$; therefore it is a balanced circle in $\Omega(\hN,\hE)$.  The projection of a balanced circle in $\Omega(\hN,\hE)$ is a circle in $\Delta$.  Thus, $e \in E(\Delta')$ since  $\Delta'$ is closed and all $e_i \in E(\Delta')$.

Now we have $\te_1,\ldots,\te_k,\te$ forming a balanced circle in $\Omega(\hN,\hE(\Delta))$.  The edges $\te_1,\ldots,\te_k$ make a path in $\Omega(\hN,\hE)$ whose projection, with $e$, makes a circle in $\Delta'$.  By the definition of a biased expansion of $\Delta$, there is only one edge in $\Omega(\hN,\hE(\Delta))$ that forms a balanced circle with $\te_1,\ldots,\te_k$; $\te$ is that edge.  By the definition of a biased expansion of $\Delta'$, there is an edge in $\tE'$ that forms a balanced circle with $\te_1,\ldots,\te_k$; $\te$ must be that edge because $\Omega(\hN,\hE) \subseteq \Omega(\hN,\hE(\Delta))$.  Therefore, $\te \in \tE'$, and it follows that $q \in \hE'$.

Part (II) follows from part (I) and Lemma \ref{L:subgroupexp}.
\end{proof}

%%%%%%%
\subsection{Ceva:  hyperplanes}\label{cev}\

In the dual representation (that is projective duality, not matroid duality), which we call \emph{Cevian} because it generalizes Ceva's theorem, an edge corresponds to a projective hyperplane instead of a point.  

We think the frame representation by hyperplanes rather than points is the more natural because of the simpler form $x_j=x_i\phi(e_{ij})$ of the representation formula for gain graphs in a Desarguesian projective geometry with coordinates $x_k$ that correspond to the nodes (as shown in Sections IV.2.5 vs.\ IV.2.6).  
Nevertheless, a biased graph has both representations if it has either one, say Menel{\ae}an in $\bbP$ and Cevian in the dual geometry $\bbP^*$, since the Menel{\ae}an representation in $\bbP$ dualizes to a Cevian representation in $\bbP^*$ with the same matroid structure, and vice versa.

A point basis $\hN$ of $\bbP$ generates a hyperplane basis $\hN^* := \{ p^* : p \in \hN\}$ where $p^* := \Span( \hN \setminus p ).$  (A hyperplane basis is a set of hyperplanes whose intersection is empty and that are independent, so that none contains the intersection of any others.)  A hyperplane basis is a point basis of the dual geometry $\bbP^*$; the concepts are equivalent.  For $p$ in an edge line $\hv\hw$ of $\hN$, let 
\begin{equation}
h(p) := \begin{cases}
\Span(\hN \setminus \{\hv,\hw\} \cup \{p\})	&\text{if } p \notin \hN, \\
p^* := \Span(\hN \setminus p)	&\text{if } p \in  \hN.
\end{cases}
\label{E:cevianhyp}
\end{equation}
We call this an \emph{apical hyperplane} of $\hN$ with $p$ as \emph{apex}.  In particular, $\hv^*$ is a \emph{node hyperplane}.  Let $\hE^*{}\full(\hN)$ be the set of all apical hyperplanes, including those with apices in $\hN$, and let $\hE^*(\hN)$ be the set of those whose apices are not in $\hN$; thus, $\hE^*{}\full(\hN) = \hE^*(\hN) \cup \hN^*$.  

Any set $\cA$ of hyperplanes has a matroid structure given by, for example, the rank function $\rk\cS := \codim\bigcap\cS$ for $\cS\subseteq\cA$.  (It is not the same as the matroid of the apices; see Example \ref{X:cevianapices}.)

In the Cevian representation of a frame matroid of a biased graph an edge $e_{vw}$ will correspond to a hyperplane of the form $\bh(e):=h(\be)$ for some $\be \in \hv\hw\setminus \{\hv,\hw\}$.  Let $\cA$ be any subset of $\hE^*{}\full(\hN)$; we call $\cA$ a \emph{Cevian arrangement} of hyperplanes.  
We show it represents $G(\Omega)$ for a biased graph $\Omega$ constructed from $\cA$, by which we mean that the matroid of $\cA$ is naturally isomorphic to $G(\Omega)$, or equivalently that the intersection lattice $\cL(\cA) := \{ \bigcap\cS : \cS\subseteq\cA \}$, which is partially ordered by reverse inclusion, corresponds isomorphically to $\Lat \Omega$ and that the rank function in the latter equals codimension in $\cL(\cA)$.  

The setup is similar to that of Section \ref{men}.  The underlying graph $\Gamma=(N,E)$ has node set $N$ in bijection with $\hN$ and $\hN^*$ by $v \leftrightarrow \hv \leftrightarrow \hv^*$.  The edge set is in bijection with $\cA$ by the apical hyperplane function $\bh: E \to \cA$.  A hyperplane $h(\be)$ with apex $\be\in\hv\hw\setminus\hN$ becomes a link $e_{vw}$.  For a hyperplane $\hv^*$, the edge is a half edge $e_v$.  The intended rank function of this graph is easy to state:  
\begin{equation}
\rk_\bbP(S) := \codim \bigcap \bh(S), 
\label{E:cevrank}
\end{equation}
where $S\subseteq E$ and $\bigcap \bh(S) := \bigcap_{e \in S} \bh(e)$.  The class of balanced circles should be the class $\cB := \{ C \in \cC(\Gamma): \rk_\bbP C < \#C \}$.   In order to define a biased graph $\Omega(\cA)$, $\cB$ must be a linear class: no theta subgraph can contain exactly two circles in $\cB$.   Moreover, $\cA$ represents $G(\Omega(\cA))$ if and only if the rank function defined by Equation \eqref{E:cevrank} agrees with the rank function of the frame matroid.  Thus, we have to prove $\cB$ is a linear class and that \eqref{E:cevrank} gives the correct rank.

Some necessary notation:  For an intersection flat $t \in \cL(\cA)$, define $\cA(t) := \{ h(\be) \in \cA: h(\be) \supseteq t \}$.  This implies a mapping $\bH$ from $\cL(\cA)$ to the power set $\cP(E)$ by $t \mapsto \bh\inv(\cA(t))$, the edge set that naturally corresponds to $\cA(t)$.  

We also need a formula for deletion of a pendant edge; that is, a link that has an endpoint with no other incident edges.

\begin{lem}[Pendance Reduction]\label{L:pendant}
An edge set $S$ that has a pendant edge $e$ satisfies $\rk_\bbP S = \rk_\bbP(S\setminus e) +1$.
\end{lem}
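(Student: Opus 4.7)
The plan is to show that adjoining the hyperplane $\bh(e)$ to the intersection of hyperplanes $\bh(f)$ for $f \in S\setminus e$ strictly drops the dimension. Since intersecting a subspace with a hyperplane drops codimension by at most $1$, producing \emph{any} witness that $\bh(e)$ does not contain $T := \bigcap \bh(S\setminus e)$ will give $\codim(T \cap \bh(e)) = \codim T + 1$, which is the claim.

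First I would name the pendant endpoint: write $e = e_{vw}$ where $v$ is incident to no other edge of $S$. The key witness will be the basis point $\hv$ itself. To show $\hv \in T$, I would run through the possibilities for an edge $f \in S\setminus e$: if $f = f_{xy}$ is a link with $v \notin \{x,y\}$, then $\bh(f) = \Span(\hN \setminus \{\hat x,\hat y\}\cup\{\hat f\})$ contains $\hv$ trivially; if $f = f_x$ is a half edge at some $x \ne v$, then $\bh(f) = \hat x^* = \Span(\hN\setminus \hat x)$ again contains $\hv$. No other cases occur, because pendance of $e$ at $v$ means no edge of $S\setminus e$ is incident to $v$. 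Hence $\hv \in \bh(f)$ for every $f \in S\setminus e$, so $\hv \in T$.

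Next I would check that $\hv \notin \bh(e)$. Since $e$ is a link with apex $\be \in \hv\hw \setminus \{\hv,\hw\}$, we have $\bh(e) = \Span(\hN\setminus\{\hv,\hw\} \cup \{\be\})$, a hyperplane of $\Span\hN$ (dimension $\#N-2$). If $\hv$ belonged to $\bh(e)$, then $\bh(e)$ would contain the set $\hN\setminus\{\hw\}\cup\{\be\}$, which has $\#N$ elements and is independent (the point $\be$ lies on $\hv\hw$ but not in $\Span(\hN\setminus\{\hw\})$, since that would force $\hw$ into $\Span(\hN\setminus\{\hw\})$, contradicting independence of $\hN$). Those $\#N$ points would then span a flat of dimension $\#N-1$, exceeding $\dim\bh(e) = \#N-2$, a contradiction.

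Combining the two observations, $\hv \in T \setminus \bh(e)$, so $T \not\subseteq \bh(e)$ and therefore $\codim(T \cap \bh(e)) = \codim T + 1$. By the definition of $\rk_\bbP$ in \eqref{E:cevrank}, this gives $\rk_\bbP S = \rk_\bbP(S\setminus e) + 1$. The only conceptual obstacle is verifying that $\hv \notin \bh(e)$, which hinges on the basic independence of $\hN$ together with the hypothesis that the apex $\be$ of the link $e$ lies strictly between $\hv$ and $\hw$ on the edge line; once that dimension check is in place, the rest is immediate from hyperplane codimension.
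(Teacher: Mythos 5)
Your proof is correct and takes essentially the same route as the paper's: both arguments use the basis point $\hv$ at the pendant endpoint as the witness that $\bigcap\bh(S\setminus e)$ contains a point (namely $\hv$) not lying in $\bh(e)$, and then conclude via the fact that intersecting with one additional hyperplane can raise codimension by at most $1$. You simply spell out the case analysis over half edges and links and the independence check for $\hv\notin\bh(e)$ in more detail than the paper does.
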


\begin{proof}
Let $e=e_{vw}$ with $v$ the endpoint at which $e$ is the only incident edge and let $W := N(S)$; then $W \setminus v = N(S \setminus e)$ (unless $S = \{e\}$; that case is easy).  It follows that $t_\bbP(S\setminus e) \subset t_\bbP(S))$ since $t_\bbP(S) \subseteq \Span(\hN \setminus (\hW\setminus\hv) \not\supseteq t_\bbP(S\setminus e$.  Thus, $\rk_\bbP S > \rk_\bbP(S\setminus e)$.  But deleting one hyperplane from $\bh(S)$ can only decrease the intersection rank by 1; the lemma follows.
\end{proof}

\begin{thm}\label{T:cev}
For any $\cA \subseteq \hE^*{}\full(\hN)$, $\Omega(\cA)$ is a biased graph.  The natural correspondence $e \mapsto h(\be)$ is an isomorphism $G(\Omega(\cA)) \cong M(\cA)$.

The intersection flats of $\cA$ correspond to the closed sets of $G(\Omega(\cA))$ through the natural correspondence $\bH$.  
A connected edge set $S$ of $\Omega(\cA)$ is unbalanced if and only if $\bigcap \bh(S) \in \cL(\hN^*)$.
\end{thm}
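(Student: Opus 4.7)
The plan is to derive Theorem~\ref{T:cev} from the Menel{\ae}an Theorem~\ref{TD:men} by projective duality. Under the canonical duality $\bbP \leftrightarrow \bbP^*$, the point basis $\hN$ of $\bbP$ dualizes to the point basis $\hN^* = \{\hv^* : v\in N\}$ of $\bbP^*$, and a short check shows the apical hyperplanes of $\hN$ with apex on $\hv\hw$ are exactly the points of $\bbP^*$ lying on the edge line $\hv^*\hw^*$; concretely, they form the pencil of hyperplanes of $\bbP$ through $\hv^* \cap \hw^* = \Span(\hN\setminus\{v,w\})$. Consequently $\hE^*{}\full(\hN)$, viewed in $\bbP^*$, is exactly the Menel{\ae}an edge-line set of the dual basis $\hN^*$, and a Cevian arrangement $\cA$ becomes a point set in $\bbP^*$ to which Section~\ref{men} applies.

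Next I would verify that the underlying graph of $\Omega(\cA)$ constructed in Section~\ref{cev} coincides with that of the Menel{\ae}an biased graph $\Omega(\hN^*, \cA)$ constructed in $\bbP^*$, since the link/half-edge rules in the two constructions are identical (apex on $\hv\hw$ in $\bbP$ iff dual point on $\hv^*\hw^*$ in $\bbP^*$). To reconcile the two candidate bias classes I would use the duality identity $\rk_\bbP S = \codim_\bbP\bigcap\bh(S) = \rk_{\bbP^*}\{\bh(e): e\in S\}$. Combined with the matroid isomorphism of Theorem~\ref{TD:men}(a) applied in $\bbP^*$, this gives, for a circle $C$, that $\rk_\bbP C < \#C$ iff $\rk_{\bbP^*}\hC = \#C - 1$ iff $\hC$ spans a cross-flat in $\bbP^*$. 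So the Cevian class $\cB$ and the Menel{\ae}an class agree, hence $\Omega(\cA) = \Omega(\hN^*,\cA)$ as biased graphs; Theorem~\ref{TD:men} then delivers the first two assertions: $\Omega(\cA)$ is a biased graph and $e \mapsto \bh(e)$ is a matroid isomorphism $G(\Omega(\cA)) \cong M(\cA)$. The flat correspondence $\bH$ is the dualization of Theorem~\ref{TD:men}(b) (extended from balanced closed sets to all closed sets via the matroid isomorphism): subspaces of $\bbP^*$ spanned by subsets of $\cA$ dualize to the intersection flats $\bigcap\bh(S)$ in $\bbP$.

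For the final assertion, let $S$ be a connected edge set of $\Omega(\cA)$ with $W := N(S)$. For every $e\in S$ we have $\bh(e) \supseteq \Span(\hN\setminus W) = \bigcap_{v\in W}\hv^*$, hence $\bigcap\bh(S) \supseteq \Span(\hN\setminus W)$, a flat of codimension $|W|$ lying in $\cL(\hN^*)$. By the matroid isomorphism, $\codim\bigcap\bh(S) = \rk_\bbP S$ equals $|W|$ if $S$ is unbalanced and $|W|-1$ if $S$ is balanced. Thus when $S$ is unbalanced the containment is an equality, so $\bigcap\bh(S) = \Span(\hN\setminus W)\in\cL(\hN^*)$, giving sufficiency. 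Conversely, suppose $S$ is balanced; then $S$ has no half edges and every $e=e_{vw}\in S$ is a link with apex outside $\hN$, so $\hu\in\bh(e_{vw})$ iff $u\notin\{v,w\}$, yielding $\bigcap\bh(S)\cap\hN = \hN\setminus W$. If $\bigcap\bh(S) = \Span(\hN\setminus V)$ for some $V\subseteq N$, independence of $\hN$ forces $\hN\setminus V = \hN\setminus W$ and hence $V=W$, giving $\codim\bigcap\bh(S)=|W|$, contradicting $\codim = |W|-1$. The principal obstacle I anticipate is the clean reconciliation of the rank-based Cevian balance with the cross-flat Menel{\ae}an balance; once the duality identity $\rk_\bbP = \rk_{\bbP^*}$ is in place, the rest is careful bookkeeping of subspaces and rank formulas.
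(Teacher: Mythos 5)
Your duality argument is correct, and it is genuinely a different route from the one the paper takes: the paper explicitly remarks that ``there is a proof by projective duality since a Cevian representation in $\bbP$ is a Menel{\ae}an representation in $\bbP^*$'' and then deliberately declines it in favor of a direct proof via the codimension rank function, Pendance Reduction (Lemma \ref{L:pendant}), and an explicit theta-graph argument for linearity of $\cB$. Your route rests on two standard duality facts that you should state and check explicitly but which do hold: (i) the apical hyperplanes with apex on $\hv\hw$ are precisely the pencil through $\hv^*\wedge\hw^* = \Span(\hN\setminus\{\hv,\hw\})$, and the apex map $p\mapsto h(p)$ is a bijection from $\hv\hw$ onto that pencil (a hyperplane containing $\Span(\hN\setminus\{\hv,\hw\})$ cannot contain the line $\hv\hw$, so it meets it in exactly one point), whence $\hE^*{}\full(\hN)$ is exactly the Menel{\ae}an edge-point set of the basis $\hN^*$ in $\bbP^*$; and (ii) $\codim\bigcap\bh(S) = \rk_{\bbP^*}\bh(S)$. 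Granting these, Theorem \ref{TD:men} applied in $\bbP^*$ immediately yields the linear-class property and the matroid isomorphism, so you avoid re-proving Pendance Reduction and the theta-graph lemma; the cost is that the argument leans on the coordinate-free proof of \cite[Theorem 1]{FrM} through Theorem \ref{TD:men}, whereas the paper's direct proof is self-contained and demonstrates how to compute with intersections of Cevian hyperplanes, which is used again in the affinographic Section \ref{orthoaffino}. Your proof of the final assertion (comparing $\bigcap\bh(S)$ with $\bigcap\hW^*$ and counting codimensions) is essentially the paper's own argument, slightly cleaner in the balanced case via the observation $\bigcap\bh(S)\cap\hN = \hN\setminus\hW$; note only that, as in the paper, the rank computations should be reduced to finite connected subsets using the finitary hypothesis when $W$ is infinite.
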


\begin{proof}
There is a proof by projective duality since a Cevian representation in $\bbP$ is a Menel{\ae}an representation in $\bbP^*$.  However, we prefer to give a direct proof using the correspondence between edges and Cevian hyperplanes and the definition of rank as codimension.

It follows from pendance reduction that the rank of a forest $S \subseteq E$ is $\rk_\bbP S = \#S$.  Then it follows that the rank of a circle $C$ is either $\#C$ or $\#C-1$.  Therefore, the balanced circles are those with rank $\rk_\bbP C = \#C-1$.  

Next, we prove that a theta graph, composed of internally disjoint paths $P, Q, R$ with the same endpoints, cannot contain exactly two balanced circles.  Suppose $P\cup Q$ and $Q \cup R$ are balanced.  Let $e\in P$ and $e'' \in R$.  Then $P\cup Q \cup R \setminus \{e,e''\}$, being a tree, has rank $\#P+\#Q+\#R-2$.  
By balance, $\rk_\bbP(P\cup Q) = \rk_\bbP(P\cup Q \setminus e)$ and $\rk_\bbP(Q\cup R) = \rk_\bbP(Q\cup R \setminus e'')$.  It follows that $\bh(P\cup Q) = \bh(P\cup Q \setminus e)$ and $\bh(Q\cup R) = \bh(Q\cup R \setminus e'')$, and consequently that $\bh(P\cup Q \cup R) = \bh(P\cup Q \cup R \setminus \{e,e''\})$.  We conclude that $\rk_\bbP(P\cup Q \cup R) = \#P+\#Q+\#R-2$.  
Let $e'\in Q$.  Then $\rk_\bbP(P\cup Q \cup R \setminus e') \leq \#P+\#Q+\#R-2$, which implies that $\rk_\bbP(P\cup R) \leq \#P+\#R-1$ by pendance reduction.  However, $P \cup R$ is a circle so $rk_\bbP(P \cup R) \geq \#P + \#R - 1$; thus we have equality:  $rk_\bbP(P \cup R) = \#P + \#R - 1$.  That is the definition of balance of $P\cup R$.  It follows that no theta graph can have exactly two balanced circles; i.e., $\Omega(\cA)$ is a biased graph.

We now prove matroid isomorphism by proving that the rank functions $\rk_\bbP$ and $\rk$ (in $G(\Omega)$) agree.  (Since our matroids are finitary, it is only necessary to consider finite sets of edges or hyperplanes.)  First, suppose $(W,S) \subseteq \|\Omega(\cA)\|$.  Clearly, $\bigcap \bh(S) \supseteq \bigcap(\hN^*-\hW^*)$, so $\rk_\bbP S \leq \rk_\bbP\hW^* = \#W$.  Now suppose $(W,S)$ is connected (and $S \neq \eset$).  Then $\rk_\bbP S \geq \#W-1$ because $S$ contains a spanning tree $T$.  If $S$ is balanced, then $S = \bcl T$ by Equation \eqref{E:clos} so that $\rk_\bbP S = \rk_\bbP \bcl T = \rk_\bbP T = \#T = \#W-1 = \rk S$, because $\bcl$ adds edges $e$ to $T$ only through balanced circles $C$, in which $\rk_\bbP C = \#C-1 = \rk_\bbP(C \setminus e)$.  If $S$ is unbalanced, then it contains an unbalanced circle $C_0$ and we can choose $T \supseteq C_0\setminus e$ for an edge $e \in C_0$.  Then $\rk_\bbP(C_0\cup T) = \rk_\bbP C_0 + \#(T\setminus C_0)$ (by pendance reduction) $= \#T+1 = \#W$.  This shows that $\rk_\bbP$ agrees with $\rk$ on finite sets, from which we conclude that $M(\cA)$ and $G(\Omega(\cA))$ are isomorphic under the mapping $\bH$.

It is easy to prove by similar reasoning that an edge set $S$ is closed in $\Omega(\cA)$ if and only if $\bH(S) = \cA(t)$ for the intersection flat $t := \bigcap\bH(S)$.  

Our previous arguments show that, for a connected subgraph $(W,S)$, $\bigcap \bh(S) = \bigcap \hW^*$ if $S$ is unbalanced.  If $S$ is balanced, then its rank is $\#W-1$ so $\bigcap\bh(S)$ is either not the intersection of node hyperplanes $\hv^*$, or is the intersection $\bigcap(\hW^*\setminus\hw^*)$ for some $w \in W$.  If the latter, there is an edge $e \in S$ incident with $w$ and the hyperplane $\bh(e)$ is not contained in $\bigcap(\hW^*\setminus\hw^*)$, a contradiction.  Therefore, $\bigcap \bh(S) \notin \cL(\hN^*)$.
\end{proof}

Theorem \ref{T:cev} has a corollary similar to Corollary \ref{PD:menall}, whose statement and proof we omit.  We cannot omit a proof that our synthetic Cevian representation agrees with the analytic Cevian representation of $G(\Phi)$ developed in Section IV.2.6 when the latter exists.  
Consider a skew field $\fF$, a gain graph $\Phi=(N,E,\phi)$ with gains in $\fF^*$, and the Desarguesian projective space $\bbP_N(\fF)$ coordinatized by $\fF$ with basis $\hN$.  The analytic representation begins with the apices:
\begin{align} \label{E:cevahyp}
\begin{aligned}
\be &= \hv \quad\text{ for a half edge at $v$,} \\
\be &= 
  \left.\begin{cases} 
\dfrac {1}{1+\phi(e_{vw})}(\phi(e_{vw}) \hv + \hw) &\text{if } \phi(e_{vw}) \neq -1,  \\[10pt] 
h_\infty \wedge \hv\hw &\text{if } \phi(e_{vw}) = -1
  \end{cases} \right\}\text{ for a link $e=e_{vw}$}
\end{aligned}
\end{align}
($\be$ was called $p^*(e)$ in Section IV.2.6), and then defines the hyperplane $\bh(e) := h(\be)$ by \eqref{E:cevianhyp} with $p:=\be$.  The resulting hyperplanes are the analytic Cevian representation $\cC[\Phi]$.  We quote the relevant part of the Generalized Theorem of Ceva from Part IV:

\begin{thm}[Theorem IV.2.18, first part] \label{IV.2.18}
The set of flats of the projective family $\cC(\Phi)$, ordered by reverse inclusion, is isomorphic to $\Lat G(\Phi)$ under the natural correspondence induced by $\bh$.
\end{thm}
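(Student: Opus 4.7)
The plan is to deduce this from the synthetic Theorem \ref{T:cev} by showing that the analytic family $\cC[\Phi]$ is a Cevian arrangement in the sense of Section \ref{cev}, and that the biased graph $\Omega(\cC[\Phi])$ it generates coincides with $\bgr{\Phi}$. Once both are verified, Theorem \ref{T:cev} immediately identifies $\cL(\cC[\Phi])$, ordered by reverse inclusion, with $\Lat G(\Omega(\cC[\Phi])) = \Lat G(\Phi)$ via the natural correspondence $\bH$ induced by $\bh$.

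First, inspection of \eqref{E:cevahyp} shows that each hyperplane $\bh(e) = h(\be)$ is apical with respect to the standard basis $\hN$ of $\bbP_N(\fF)$: for a half edge at $v$, $\be = \hv$ and $\bh(e) = \hv^*$; for a link $e_{vw}$, the apex $\be$ is an explicit point of the edge line $\hv\hw$ (landing on $h_\infty$ exactly when $\phi(e_{vw}) = -1$). So $\cC[\Phi] \subseteq \hE^*{}\full(\hN)$. Moreover, since $\be \in \hv\hw$ with $v,w$ the endpoints of $e$ in $\Phi$, the synthetic construction of $\Omega(\cC[\Phi])$ reads off the same endpoints; the half-edge cases match the same way. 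Hence the underlying graphs of $\bgr{\Phi}$ and $\Omega(\cC[\Phi])$ are identical on the nose.

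The main step is to match the balance classes. A circle $C = e_1 e_2 \cdots e_l$, with consecutive nodes $v_0, v_1, \ldots, v_{l-1}, v_0$ and $e_i$ joining $v_{i-1}$ to $v_i$, is balanced in $\Omega(\cC[\Phi])$ iff $\codim \bigcap_i \bh(e_i) = l-1$ and balanced in $\bgr{\Phi}$ iff $\phi(e_1) \phi(e_2) \cdots \phi(e_l) = 1$. I would verify coincidence by restricting to the coordinate subspace indexed by $N(C)$ and writing the defining equation of each $\bh(e_i)$ from \eqref{E:cevahyp} as a linear relation that couples $x_{v_{i-1}}$ and $x_{v_i}$ with coefficients encoding $\phi(e_i)$ (all coordinates outside $N(C)$ being set to zero). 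Chaining the relations cyclically around $C$ produces the compatibility condition $\phi(C) = 1$, which is precisely the condition for the codimension of $\bigcap_i \bh(e_i)$ to drop from the generic value $l$ down to $l-1$. The corner case $\phi(e_{vw}) = -1$, where $\be$ falls on $h_\infty$, is a limiting form of the same calculation.

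The principal obstacle is keeping side conventions correct over a skew field: scalars must be applied consistently on the left or right in the hyperplane equations so that the cyclic composition of relations around $C$ reproduces the product $\phi(e_1)\phi(e_2)\cdots\phi(e_l)$ in its correct noncommutative order. A cleaner but less elementary alternative is to observe that projective dualization sends $\cC[\Phi]$ to a Menel{\ae}an point configuration for a gain graph obtained from $\Phi$ by a controlled change of gains, and then to invoke the Menel{\ae}an Theorem \ref{IV.2.10} already quoted above; this converts the Cevian lattice identification into the already-proven Menel{\ae}an one, completing the proof.
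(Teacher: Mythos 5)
Your proposal is correct in outline, but it takes a genuinely different route from the paper, because the paper does not prove this statement at all: Theorem \ref{IV.2.18} is imported verbatim from Part IV, where it is established analytically (by coordinate computation, prior to and independent of the synthetic machinery here). You instead re-derive it from the synthetic Theorem \ref{T:cev}, by checking that $\cC[\Phi]$ is a Cevian arrangement over the standard basis $\hN$ and that the balance classes of $\Omega(\cC[\Phi])$ and $\bgr{\Phi}$ coincide. That balance-matching step is exactly the content of Theorem \ref{T:cevequiv}, whose proof in the paper \emph{relies on} Theorem \ref{IV.2.18}; so you must (and do) avoid circularity by verifying the coincidence directly. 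Your chaining computation is the right one: each $\bh(e_{vw})$ contains $\hN\setminus\{\hv,\hw\}$, so its equation couples only $x_v$ and $x_w$ (in the form $x_w = x_v\phi(e_{vw})$ noted in Section \ref{cev}), the intersection over a circle $C$ automatically contains $\Span(\hN\setminus\hN(C))$, and the codimension drops from $\#C$ to $\#C-1$ precisely when the cyclic compatibility condition $\phi(C)=1$ holds; the case $\phi(e_{vw})=-1$ is the same apex $[\phi(e_{vw})\hv+\hw]$ without the normalizing factor, so it needs no separate treatment. Your alternative via projective duality to a Menel{\ae}an configuration plus Theorem \ref{IV.2.10} is also legitimate (the required gain change is negation, cf.\ Example \ref{X:cevianapices}), and matches the paper's remark that a Cevian representation in $\bbP$ is a Menel{\ae}an one in $\bbP^*$. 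What your route buys is a synthetic-first logical order in which Theorem \ref{T:cevequiv} becomes an immediate byproduct rather than a separate deduction; what it costs is that the skew-field side conventions in the hyperplane equations, which you flag but do not resolve, must actually be pinned down for the noncommutative product $\phi(e_1)\cdots\phi(e_l)$ to come out in the right order --- this is bookkeeping rather than a gap, but it is the entire technical content of the step.
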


\begin{thm}\label{T:cevequiv}
Consider a gain graph $\Phi=(N,E,\phi)$ with gains in the multiplicative group of a skew field $\fF$, and the Desarguesian projective space $\bbP_N(\fF)$ coordinatized by $\fF$.  
Then $\cC[\Phi]$ is a synthetic Cevian representation of $G(\Phi)$.  Stated precisely, the mappings $v \mapsto \hv^*$ and $e \mapsto h(e)$ are isomorphisms $\bgr{\Phi} \cong \Omega(\cC[\Phi])$ and $G(\Phi) \cong M(\cC[\Phi])$.

Conversely, given a biased graph $\Omega$, a synthetic Cevian representation of $\G(\Omega)$ in a projective geometry $\bbP(\fF)$ over a division ring $\fF$ is an analytic Cevian representation as in Equations \eqref{E:cevahyp}.
\end{thm}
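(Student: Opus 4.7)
The plan parallels the proof of Theorem~\ref{T:menequiv}, with Theorem~\ref{IV.2.18} replacing Theorem~\ref{IV.2.10}. For the forward direction I would first check that the bijections $v\mapsto\hv^*$ and $e\mapsto h(\be)$ give an isomorphism of the underlying graphs of $\bgr{\Phi}$ and $\Omega(\cC[\Phi])$: a half edge at $v$ corresponds to the node hyperplane $\hv^*$ in both constructions, and for a link $e=e_{vw}$ the apex $\be$ defined by \eqref{E:cevahyp} lies on $\hv\hw\setminus\hN$, so $h(\be)$ is an apical hyperplane that $\Omega(\cC[\Phi])$ realizes as a link with endpoints $v$ and $w$. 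Theorem~\ref{IV.2.18} then supplies the matroid isomorphism $G(\Phi)\cong M(\cC[\Phi])$ under the same correspondence. Balance coincides in the two biased graphs because in each one a circle $C$ is balanced exactly when its matroid rank equals $\#C-1$ rather than $\#C$: in $\Omega(\cC[\Phi])$ this is the defining condition $\rk_\bbP C<\#C$, and in $\bgr{\Phi}$ it is the standard characterization of a balanced circle as a circuit of $G(\Phi)$.

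For the converse I would start from a synthetic Cevian representation of $\G(\Omega)$, consisting of an independent set $\hN=\{\hv:v\in N\}$ and, for each link $e=e_{vw}$, an apical hyperplane $h(\be)$ with $\be\in\hv\hw\setminus\{\hv,\hw\}$; half edges are already realized by the node hyperplanes $\hv^*$ in both pictures. Choose homogeneous coordinates on $\bbP(\fF)$ so that $\hN$ is the standard basis, and take $h_\infty:=\{[\bx]:\sum_{v\in N}x_v=0\}$ as the ideal hyperplane. Because $\be$ lies on $\hv\hw$ and differs from both endpoints, a unique $\phi(e_{vw})\in\fF^\times$ satisfies $\be=[\phi(e_{vw})\hv+\hw]$; I adopt this as the gain function. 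A direct calculation verifies \eqref{E:cevahyp}: when $\phi(e_{vw})\neq-1$ the vector $(1+\phi(e_{vw}))\inv(\phi(e_{vw})\hv+\hw)$ is just a normalized representative of $\be$, and when $\phi(e_{vw})=-1$ the coordinates of $-\hv+\hw$ sum to zero, so $\be$ lies on $h_\infty$ and therefore coincides with $h_\infty\wedge\hv\hw$.

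Finally I would confirm that $\Phi:=(\|\Omega\|,\phi)$ satisfies $\bgr{\Phi}=\Omega$. By construction, the analytic Cevian arrangement $\cC[\Phi]$ coincides pointwise with the synthetic arrangement we began with, so the forward direction just proved yields $G(\Phi)\cong M(\cC[\Phi])=G(\Omega)$ on the common edge set. Since balance of a circle is determined by its rank in the frame matroid, $\bgr{\Phi}$ and $\Omega$ have the same balanced circles and so are equal as biased graphs.

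The main technical delicacy I anticipate is the degenerate case $\phi(e_{vw})=-1$, which forces a deliberate choice of $h_\infty$ in coordinates to match the analytic formula; apart from this, the proof is essentially bookkeeping driven by Theorem~\ref{IV.2.18}.
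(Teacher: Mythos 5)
Your proposal is correct and follows essentially the same route as the paper's proof: establish the underlying-graph isomorphism, invoke Theorem~\ref{IV.2.18} (together with the rank characterization of balance from Theorem~\ref{T:cev}) for the matroid and biased-graph isomorphisms, and obtain the converse by reading Equations~\eqref{E:cevahyp} in reverse to recover gains in $\fF^\times$. The paper's argument is just a terser version of the same bookkeeping, including the degenerate case $\phi(e_{vw})=-1$ handled by the choice of ideal hyperplane.
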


\begin{proof}
The underlying graphs are obviously isomorphic under the specified mappings.  By Theorems \ref{T:cev} and \ref{IV.2.18}, the matroids are isomorphic.  Therefore the rank functions $\rk_{\Phi}$ and $\rk_{\Omega(\cC[\Phi])}$ correspond under the graph isomorphism so the biased graphs are also isomorphic.

It is clear that, given a synthetic Cevian representation $\cC$ of $\Omega$ based on the independent set $\hN \subseteq \bbP(\fF)$, one can use \eqref{E:cevahyp} in reverse to assign gains on $\Omega$ from $\fF^\times$ that make a gain graph $\Phi$ such that $\cC = \cC[\Phi]$ and $\bgr{\Phi}=\Omega$.
\end{proof}

\begin{ex}\label{X:cevianapices}
The apices of a Cevian representation are not a Menel{\ae}an representation.  Consider the representations in the plane over a skew field.  In the former the criterion for collinearity is a gain product of $-1$ (Menelaus' Theorem) and in the other there is concurrency when the gain product is $+1$ (Ceva's Theorem) not accepted.  The two criteria agree when the characteristic is $2$ but not otherwise.  
\end{ex}

%%%%%%%%%%%%%%%%%%%%%%%%%%%%%%%%%%%%%%%%%%%%%%%%%%%%%

\pagebreak[2]\section{Two dual representations of the lift matroid} \label{ortho}

Just as with the frame matroid, there are projectively dual representations of the lift matroid, one by points and the other by hyperplanes.  Here the most interesting viewpoints are affine instead of projective, which, if anything, makes the dual representations look more different from each other because the dual of the ideal hyperplane is a mere point.

%%%%%%%
\subsection{Points in parallel lines}\label{orthopar}\

%%%
\subsubsection{Projective orthography}\label{orthoproj}\

In a projective geometry $\bbP$ choose a base hyperplane $\bbP'$ and a projective representation, $\bz': E(\Delta) \to \bbP'$, of the graphic matroid $G(\Delta)$ of a nonempty simple graph $\Delta$ in $\bbP'$.  
(Since graphic matroids are regular, that is possible if and only if the dimension is large enough, i.e., $\dim\bbP \geq \rk\Delta = \#N-c(\Delta)$.)  
Also choose a point $\he_0$ in $\bbP \setminus \bbP'$.  The point determines a family of lines in $\bbP$: one line $\bz'(e)\he_0$ for each edge $e\in E(\Delta)$.  Call these lines the \emph{edge lines} in $\bbP$ and write $\hE(\Delta,\he_0)$ for the union of the edge lines.  
Given a subset $\hE$ of $\hE(\Delta,\he_0)\setminus \he_0$, we construct a biased graph $\Omega(\Delta,\bz',\hE,\he_0)$, abbreviated $\Omega_0(\hE)$, whose extended lift matroid is isomorphic to the projective dependence matroid $M(\hE \cup \{\he_0\})$; that is, $\hE \cup \{\he_0\}$ will be a representation of that extended lift matroid, with $e_0$ represented by $\he_0$.  

For the underlying graph $\Gamma$, let $N = N(\Delta)$ and let $E$ be a set in one-to-one correspondence with $\hE$ by $e \mapsto \he$.  (It is often convenient to take $E = \hE$.)  If $e\in E$, the \emph{projection} $p(e)$ is the edge of $\Delta$ whose edge line contains $\he$.
A \emph{cross-flat} is a projective flat that does not contain $\he_0$.  We define a circle $C$ in $\Gamma$ to be \emph{balanced} if $\hC$ spans a cross-flat.  Let $\cB$ be the class of balanced circles.  

\begin{thm}  \label{TD:orthorep}
This definition gives a biased graph $\Omega_0(\hE) = (\Gamma,\cB) = (N,E,\cB)$ whose extended lift matroid $L_0(\Omega_0(\hE))\cong M(\hE \cup \{\he_0\})$ by the correspondence $e \mapsto \he$ and $e_0 \mapsto \he_0$.  
\end{thm}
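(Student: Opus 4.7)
The plan is to transfer everything to the base hyperplane $\bbP'$ via central projection from $\he_0$, and then compare the projective rank function on $\hE \cup \{\he_0\}$ with the rank function of $L_0(\Omega_0(\hE))$. Let $\pi\colon \bbP \setminus \{\he_0\} \to \bbP'$ denote central projection. By construction each edge line projects to the single point $\bz'(p(e))$, so for any $S \subseteq E$, $\pi(\hS) = \bz'(p(S))$, which has projective rank $\rk_G(S)$ because $\bz'$ represents $G(\Delta)$ and parallel edges of $\|\Omega\|$ project to the same edge of $\Delta$. Since $\he_0 \notin \bbP'$, $\rk_\bbP(\hS \cup \{\he_0\}) = \rk_G(S) + 1$, and therefore $\rk_\bbP \hS = \rk_G(S)$ when $\he_0 \notin \Span \hS$ and $\rk_G(S) + 1$ when $\he_0 \in \Span \hS$.

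Next I would promote the balance criterion from circles to arbitrary $S \subseteq E$: \emph{$S$ is balanced if and only if $\he_0 \notin \Span \hS$.} The ``only if'' is immediate from $\Span \hat C \subseteq \Span \hS$ for each circle $C \subseteq S$. For the converse, first observe that any forest $F \subseteq E$ has $\he_0 \notin \Span \hF$: $p(F) \subseteq E(\Delta)$ is a forest in $\Delta$, so $\pi(\hF) = \bz'(p(F))$ is independent of rank $\#F$ in $\bbP'$, and the rank formula then forces $\he_0 \notin \Span \hF$. Given a balanced $S$, fix a maximal forest $T \subseteq S$; for each $e \in S \setminus T$ the unique circle $C_e \subseteq T \cup \{e\}$ is balanced, so $\Span \hat{C_e}$ has projective dimension $\#C_e - 2$, matching that of $\Span \hat{C_e \setminus e}$. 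Hence $\he \in \Span \hat{C_e \setminus e} \subseteq \Span \hat T$, so $\Span \hS = \Span \hat T$, a cross-flat.

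To check that $\cB$ is a linear class, take a theta subgraph with internally disjoint paths $P, Q, R$ and suppose $P \cup Q$ and $Q \cup R$ are balanced but $P \cup R$ is not. The whole theta is then unbalanced, so the balance criterion and the rank formula give $\rk_\bbP \hat{P \cup Q \cup R} = \rk_G(P \cup Q \cup R) + 1 = \#P + \#Q + \#R - 1$. Applying the modular law in the projective subspace lattice to the two balanced subcircles forces $\Span \hat{P \cup Q} \wedge \Span \hat{Q \cup R}$ to have projective rank $\#Q - 1$; but that intersection contains $\Span \hat Q$, which has rank $\#Q$ (since $Q$ is a forest), a contradiction. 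Hence $\Omega_0(\hE)$ is a biased graph.

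The matroid isomorphism now drops out: the preceding steps give $\rk_\bbP \hS = \rk_G(S) + [S \text{ unbalanced}] = \rk_{L_0}(S)$ for $S \subseteq E$, and $\rk_\bbP(\hS \cup \{\he_0\}) = \rk_G(S) + 1 = \rk_{L_0}(S \cup \{e_0\})$, so the natural correspondence $e \mapsto \he$, $e_0 \mapsto \he_0$ is rank-preserving and yields $L_0(\Omega_0(\hE)) \cong M(\hE \cup \{\he_0\})$. The main obstacle I expect is the ``if'' direction of the balance criterion, which requires two separate arguments (projection for forests, and closing up through balanced circles for general sets); once that is in hand, the theta check and rank comparison are short calculations with the rank formula.
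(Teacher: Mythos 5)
Your proposal is correct, and its core --- the rank formula $\rk\hS = \rk_\Gamma S + \epsilon$ obtained by projecting from $\he_0$ onto $\bbP'$, together with the forest/fundamental-circle argument showing a balanced set spans a cross-flat --- is exactly the content of the paper's Lemma \ref{LD:orthorank}. (Minor quibble: you swap the labels ``only if'' and ``converse'' in the balance criterion, but you prove the hard direction with the right argument, so nothing is lost.) Where you genuinely diverge is in establishing that $\cB$ is a linear class: the paper gets this for free by citing Proposition II.3.15, which says that whenever the lift-matroid rank formula built from $(\Gamma,\cB)$ is a matroid rank function, $\cB$ must be linear; since $r(S) := \rk\hS$ is automatically a matroid rank function, the conclusion follows in one line. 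You instead run a direct theta-graph check: assuming $P\cup Q$ and $Q\cup R$ balanced and $P\cup R$ not, the modular law in the projective subspace lattice forces $\Span\hat{(P\cup Q)} \wedge \Span\hat{(Q\cup R)}$ to have rank $\#Q-1$, contradicting that it contains the rank-$\#Q$ flat $\Span\hat Q$. That computation is sound (it parallels the theta argument the paper does give for the Cevian Theorem \ref{T:cev}), and it buys self-containedness: your proof does not lean on Part II, which is arguably more in the spirit of this paper's synthetic program, at the cost of a little extra work. Your final rank comparison giving the isomorphism $L_0(\Omega_0(\hE)) \cong M(\hE\cup\{\he_0\})$ is the same bookkeeping the paper performs implicitly when it observes that $r$ ``has the form of a lift-matroid rank function.''
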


We say that $\hE$ is a (projective) \emph{synthetic orthographic representation} of a biased graph $\Omega$ if $\Omega \cong \Omega_0(\hE)$.  The representation mapping $E(\Omega) \to \hE$ is $e \mapsto \he$ when $e$ and $\he$ correspond to the same edge in $\Omega_0(\hE)$.

\begin{proof}  
We first prove a lemma.  Write $\rk_\Gamma$ for rank in $G(\Gamma)$.

\begin{lem}  \label{LD:orthorank}
For any $S\subseteq E$, 
\begin{equation*}
\rk(\hS) = \begin{cases}
\rk_\Gamma(S) 	&\text{if $S$ is balanced}, \\
\rk_\Gamma(S)+1 &\text{if $S$ is unbalanced.}
\end{cases}
\end{equation*}
\end{lem}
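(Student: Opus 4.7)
The plan is to reduce the rank computation to the representation $\bz'$ of $G(\Delta)$ by projecting from $\he_0$, then use the balance condition to decide whether this projection loses a dimension or not.

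First I would set up the fundamental identity: for any $S\subseteq E$, since each $\he$ lies on the line $\he_0\,\bz'(p(e))$, one has
\[
\Span(\hS\cup\{\he_0\}) \;=\; \Span(\bz'(p(S))\cup\{\he_0\}).
\]
Because $\bz'(p(S))\subseteq\bbP'$ and $\he_0\notin\bbP'$, adjoining $\he_0$ strictly raises the rank by $1$, giving $\rk\Span(\hS\cup\{\he_0\}) = \rk\Span(\bz'(p(S))) + 1$. Since $\bz'$ represents $G(\Delta)$, this equals $\rk_\Delta(p(S))+1$. Finally, $p(S)$ and $S$ induce the same adjacency on $N$ (one edge versus several parallel edges give the same components), so $\rk_\Delta(p(S)) = \#N - c(p(S)) = \#N - c(S) = \rk_\Gamma(S)$. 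Therefore
\[
\rk\Span(\hS\cup\{\he_0\}) \;=\; \rk_\Gamma(S)+1
\]
unconditionally. Consequently $\rk\Span(\hS)$ is either $\rk_\Gamma(S)+1$ (if $\he_0\in\Span(\hS)$) or $\rk_\Gamma(S)$ (if $\he_0\notin\Span(\hS)$).

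It then remains to prove the equivalence $\he_0\in\Span(\hS)\iff S$ is unbalanced. The easy direction: if $S$ contains an unbalanced circle $C$, then by the definition of balance $\Span(\hC)$ is not a cross-flat, so $\he_0\in\Span(\hC)\subseteq\Span(\hS)$. For the converse (which I expect to be the main obstacle), I would assume $S$ is balanced and build $\Span(\hS)$ from a spanning forest. Pick a spanning forest $T$ of $(N(S),S)$. Because $p$ is injective on $T$ (a forest has no parallel edges), $\bz'(p(T))$ is an independent set in $\bbP'$; projecting back through $\he_0$ gives that $\hT\cup\{\he_0\}$ is independent, so $\he_0\notin\Span(\hT)$ and $\rk\Span(\hT)=|T|$.

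The closing step is to show $\Span(\hS)=\Span(\hT)$, whence $\he_0\notin\Span(\hS)$. For each chord $e\in S\setminus T$ let $C_e=T_e\cup\{e\}$ be the fundamental circle. Since $C_e$ is balanced, applying the unconditional identity to $C_e$ gives $\rk\Span(\hC_e\cup\{\he_0\})=|C_e|$, and since $\he_0\notin\Span(\hC_e)$ we get $\rk\Span(\hC_e)=|C_e|-1=|T_e|=\rk\Span(\hT_e)$. Hence $\Span(\hC_e)=\Span(\hT_e)$, so $\he\in\Span(\hT_e)\subseteq\Span(\hT)$. This yields $\Span(\hS)=\Span(\hT)$, contradicting $\he_0\in\Span(\hS)$; thus if $\he_0\in\Span(\hS)$ then $S$ must be unbalanced, completing the lemma.
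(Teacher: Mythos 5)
Your proof is correct and follows essentially the same route as the paper's: both establish $\Span(\hS\cup\{\he_0\})=\Span(\bz'(p(S))\cup\{\he_0\})$ to get $\rk\hS=\rk_\Gamma S+\epsilon$ with $\epsilon$ recording whether $\he_0\in\Span\hS$, handle the unbalanced case via an unbalanced circle, and settle the balanced case by a spanning forest together with fundamental circles. The only cosmetic difference is that you phrase the balanced case as $\Span\hS=\Span\hT$ with $\he_0\notin\Span\hT$, whereas the paper argues directly that $\rk\hS=\rk\hF=\rk_\Gamma S$.
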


\begin{proof}  
It is clear that $\rk\big(\bz'(p(S)) \cup \{\he_0\}\big) = \rk \bz'(p(S)) + 1$ and that $\rk \bz'(p(S)) =\rk_\Delta p(S) = \rk_\Gamma S$.  It is also clear that $\Span (\hS \cup \{\he_0\}) = \Span\big(\bz'(p(S)) \cup \{\he_0\}\big)$.  Therefore,
$$
\rk\hS = \rk_\Gamma S + \epsilon,
$$
where $\epsilon = 0$ if $\he_0 \notin \Span\hS$ but $\epsilon = 1$ if $\he_0 \in \Span \hS$.  

If $S$ is unbalanced, it contains an unbalanced circle $C$ and $\he_0 \in \Span \hC$; then $\he_0 \in \Span\hS$ so $\epsilon = 1$. 

If $C$ is a balanced circle, then $\he_0 \not\in \Span \hC$; thus $\epsilon = 0$.  Thus Lemma \ref{LD:orthorank} is proved for a circle.  

Consider a forest $S$.  Since $\# S = \rk_\Gamma S$, we know that $\rk\hF$ must also equal $\rk_\Gamma S$.  Thus, $\hF$ is independent.  

Finally, consider a general balanced edge set $S$.  Take a maximal forest $F\subseteq S$.  For $e \in S \setminus F$, let $C_e$ be the fundamental circle of $e$ with respect to $F$.  Each $C_e$ is balanced; thus from the circle case and the independence of the forest $C \setminus e$ we infer $\he \in \Span(\hC \setminus \he)$.  Therefore $\hS \subseteq \Span \hF$, so $\rk \hS = \rk \hF = \rk_\Gamma S$.  In other words, $\epsilon = 0$.  This completes the proof of Lemma \ref{LD:orthorank}.
\end{proof}
  
Now we complete the proof of the theorem.  If we define $r(S) = \rk \hS$, then $r$ is a matroid rank function.  On the other hand, $r$ has the form of a lift-matroid rank function, in fact the one that is associated with $(\Gamma,\cB)$ if the latter is indeed a biased graph.  Proposition II.3.15 states that if this rank function defines a matroid, then $\cB$ is a linear class; hence, $\Omega_0(\hE)$ is a biased graph.
\end{proof}

\begin{cor}  \label{CD:orthobal}
An edge set $S\subseteq E$ is balanced in $\Omega_0(\hE)$ if and only if $\hS$ is contained in a cross-flat.  
$S$ is a closed, balanced set of $L(\Omega_0(\hE))$ if and only if $\hS = \hE \cap t$ for some cross-flat $t$.
\hfill $\square$
\end{cor}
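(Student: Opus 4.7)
The plan is to extract both parts as direct consequences of Theorem \ref{TD:orthorep} and Lemma \ref{LD:orthorank}. The key observation, already embedded in the proof of that lemma, is that for any $S\subseteq E$ one has $\he_0 \in \Span\hS$ if and only if $S$ is unbalanced; equivalently, $\Span\hS$ is a cross-flat if and only if $S$ is balanced. Both assertions of the corollary will follow by unwinding definitions around this dichotomy.

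For the first statement, if $S$ is balanced then by the observation $\Span\hS$ is itself a cross-flat containing $\hS$. Conversely, if $\hS$ is contained in some cross-flat $t$, then $\Span\hS \subseteq t$, so $\he_0 \notin \Span\hS$, and the observation forces $S$ to be balanced.

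For the second statement, I would use that Theorem \ref{TD:orthorep} gives an isomorphism
$$
L(\Omega_0(\hE)) = L_0(\Omega_0(\hE))\setminus e_0 \;\cong\; M(\hE \cup \{\he_0\}) \setminus \he_0 = M(\hE),
$$
under which closed sets correspond to closed sets. The closed sets of $M(\hE)$ are exactly the intersections $\hE\cap t$ as $t$ ranges over projective flats of $\bbP$. Suppose $S$ is closed and balanced in $L(\Omega_0(\hE))$; then $\hS$ is closed in $M(\hE)$, meaning $\hS = \hE \cap \Span\hS$, while balance together with the first part forces $\Span\hS$ to be a cross-flat, so $t := \Span\hS$ works. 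Conversely, if $\hS = \hE \cap t$ for a cross-flat $t$, then $\hS \subseteq t$, so $S$ is balanced by the first part, and the form $\hE\cap t$ exhibits $\hS$ as closed in $M(\hE)$, hence $S$ as closed in $L(\Omega_0(\hE))$.

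There is no substantive obstacle here. The only point that warrants care is checking that the matroid isomorphism of Theorem \ref{TD:orthorep} transports the closure operator correctly, which is automatic for isomorphic matroids, and that on balanced edge sets the closure in $L(\Omega_0(\hE))$ agrees with the one inherited from $L_0(\Omega_0(\hE))$ (it does, because for balanced $S$ not containing $e_0$ the formula $\clos_{L_0}S=\bcl S$ already lies in $E$).
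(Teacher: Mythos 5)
Your proof is correct, and it follows exactly the route the paper intends: the corollary is stated with no written proof (just $\square$) as an immediate consequence of Theorem \ref{TD:orthorep} and the dichotomy $\he_0 \in \Span\hS \Leftrightarrow S$ unbalanced established inside the proof of Lemma \ref{LD:orthorank}, which is precisely what you use. Your added check that closure in $L(\Omega_0(\hE))$ agrees with the restriction of closure in $L_0(\Omega_0(\hE))$ on balanced sets is a reasonable piece of extra care, not a deviation.
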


Now recall the definitions of $\bbP$ and $\hE(\Delta,\he_0)$ from the beginning of this subsection.  We define an \emph{affine part} of a projective geometry to be what remains after deletion of any hyperplane.

\begin{prop}  \label{PD:orthoall}
Let $\Omega := \Omega_0(\hE(\Delta,\he_0))$.

{\rm (I)}  The biased graph $\Omega$ is a biased expansion of $\Delta$.  

{\rm (II)}  Suppose $\bbP$ has coordinate skew field $\fF$.  Assume $z'$ represents $\Delta$ in an affine part of $\bbP'$; if $\Delta$ is finite, this means $\Delta$ is $\#\fF$-colorable.  If $\bbP$ has coordinate skew field $\fF$ and $\Delta$ is $\#\fF$-colorable, then
$\Omega = \bgr{\fF^+ \Delta}$ i.e., $\Omega$ has gains in $\fF^+$.
\end{prop}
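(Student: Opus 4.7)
For (I), I would verify the biased expansion property directly. Fix a circle $C = e_1 e_2 \cdots e_l$ in $\Delta$ together with chosen lifts $\te_1, \ldots, \te_{l-1}$ covering $e_1, \ldots, e_{l-1}$, let $q_j \in \bbP$ denote the point representing $\te_j$, and set $\hP := \{q_1, \ldots, q_{l-1}\}$. Since the path $P := \te_1 \cdots \te_{l-1}$ is a tree, the forest case of Lemma~\ref{LD:orthorank} gives $\rk \hP = l - 1$ and $\he_0 \notin \Span \hP$, so $\Span \hP$ is a cross-flat of dimension $l - 2$. The heart of the argument is a dimension count for the join $\Span \hP \vee l_{e_l}$ with $l_{e_l} := \bz'(e_l) \he_0$. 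Each $q_j$ together with $\he_0$ spans $l_{e_j}$, so $\bz'(e_j) \in \Span \hP \vee \{\he_0\}$ for every $j < l$, and the graphic dependence of $\bz'(C)$ in $G(\Delta)$ places $\bz'(e_l) \in \Span\bz'(C \setminus e_l) \subseteq \Span \hP \vee \{\he_0\}$. Hence the join equals $\Span \hP \vee \{\he_0\}$, of dimension $l - 1$, and the modular law forces the meet $\Span \hP \wedge l_{e_l}$ to be a single point $q$, necessarily distinct from $\he_0$. This $q$ is the representative of a unique lift $\te_l \in p^{-1}(e_l)$, and $\te_1 \cdots \te_l$ is balanced because its point set still spans the cross-flat $\Span \hP$. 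Uniqueness holds because any alternative lift whose representative lies outside $\Span \hP$ would expand the span to $\Span \hP \vee \{\he_0\}$, which contains $\he_0$.

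For (II), I would set up coordinates to exhibit $\fF^+$-gains. Take $\bbP = \bbP_{N \cup \{0\}}(\fF)$, $\bbP' = \{x_0 = 0\}$, and $\he_0 = [e_0]$, with $e_0$ and $y_v$ the standard basis vectors for coordinates $0$ and $v \in N$. The hypothesis that $z'$ represents $\Delta$ in an affine part of $\bbP'$ (equivalent, for finite $\Delta$, to a proper $\#\fF$-coloring) allows, after switching, the normalization $\bz'(e_{uv}) = [y_u - y_v]$. Every point on the edge line $\bz'(e_{uv})\he_0$ other than $\he_0$ then has a unique representation $[y_u - y_v + t e_0]$ with $t \in \fF$, so the parameter $t$ defines a gain $\phi(\te_{u \to v}) := t$ valued in $\fF^+$. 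To conclude $\Omega = \bgr{\fF^+ \Delta}$, I would verify that balance in $\Omega$ coincides with additive gain-balance. For a circle $\te_1 \cdots \te_l$ over $C = e_1 \cdots e_l$ oriented as $u_i \to u_{i+1}$, the vector representatives $w_i := y_{u_i} - y_{u_{i+1}} + t_i e_0$ satisfy $\sum_i w_i = (\sum_i t_i) e_0$. If $\sum t_i \neq 0$ this exhibits $\he_0$ in the projective span, so the circle is unbalanced; if $\sum t_i = 0$, the one-dimensionality of the cycle space of $C$ prevents $e_0$ from lying in the linear span, so the circle is balanced.

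\textbf{Main obstacle.} The pivot of (I) is the equality $\Span \hP \vee l_{e_l} = \Span \hP \vee \{\he_0\}$, which requires extracting $\bz'(e_l)$ from the graphic dependence in $G(\Delta)$. In (II), the delicate verification is that $\sum t_i = 0$ forces $\he_0 \notin \Span\{q_1, \ldots, q_l\}$; the key ingredient is the one-dimensional cycle space of $C$, which compels any linear dependence among the $y_{u_i} - y_{u_{i+1}}$ to have equal coefficients, so any hypothetical expression $e_0 = \sum c_i w_i$ would force $c_i = c$ constant and then $1 = c \sum t_i = 0$, a contradiction.
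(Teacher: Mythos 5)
Your proposal is correct and follows essentially the paper's route: part (I) is the same cross-flat dimension count that the paper uses for the Menel{\ae}an analogue (Corollary \ref{PD:menall}) and leaves implicit here by citing Corollary \ref{CD:orthobal}, and part (II) is the same coordinate normalization (placing $\he_0$ on a distinguished coordinate axis and reading the gain off as the $e_0$-coordinate of $\he$), with your explicit verification that gain-balance matches cross-flat balance filling in what the paper asserts without detail. The only cosmetic point is that your phrase ``after switching'' in (II) should be ``after a projective transformation,'' i.e., the projective uniqueness of graphic-matroid representations that the paper invokes at the same step.
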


\begin{proof}  
(I) is a consequence of the definitions and Corollary \ref{CD:orthobal}.  

(II) follows by taking the deleted hyperplane $h_\infty'$ in $\bbP'$ and choosing the ideal hyperplane $h_\infty := h_\infty' \vee \he_0$ in $\bbP$; then $\he_0$ is an ideal point and $\bz'(E(\Delta))$ is disjoint from $h_\infty $.  Then choose coordinates for $\bbP$ so that $h_\infty$ is the infinite hyperplane in that coordinate system, in the affine part $\bbP\setminus h_\infty$ the hyperplane $\bbP'\setminus h_\infty'$ is a coordinate hyperplane, say for coordinate $z_0$, and in $\bbP'$ each $\bz'(e_{ij})=\hv_j-\hv_i$.  (This is possible because graphic matroid representation is projectively unique.)  For an edge $e$ of $\Omega$ that projects to $e_{ij}\in E(\Delta)$, the gain of $e$ (in the direction $v_iv_j$) is the $z_0$-coordinate of $\he$.

The colorability criterion comes from the Critical Theorem of Crapo and Rota \cite[Chapter 16]{CR}.   Assume $\bbP=\bbP(\fF)$.  According to the Critical Theorem, a finite graph $\Delta$ has a matroid representation in $\bbP'$ that avoids a hyperplane if and only if (i) $\bbP'$ is large enough, i.e., $\dim\bbP \geq \rk\Delta$, and (ii) $\fF$ is infinite or $\fF=\bbF_q$ and the chromatic polynomial of $\Delta$ satisfies $\chi_\Delta(q)>0$; equivalently, $\Delta$ is $\#\fF$-colorable.  This is true for every representation $\bz'$ and every hyperplane to be avoided.  (The Critical Theorem could be proved for non-Desarguesian projective geometries---all of which are planes, which unfortunately makes it less interesting---the proof by M\"obius inversion is the same and the order of $\fF$ is replaced by the order of the plane.)
\end{proof}

We can expand on this observation by considering a subset $\hE \subseteq \hE(\Delta,\he_0)$.  We call $\hE$ \emph{cross-closed with respect to $\Delta$} if, whenever $t$ is a cross-flat spanned by a subset of $\hE$, then $t \cap \hE(\Delta,\he_0)\subseteq \hE$.

\begin{cor}  \label{CD:orthocross}  
{\rm (I)}  Suppose $\hE\subseteq \hE(\Delta,\he_0)$.
Then $\hE$ is cross-closed if and only if $\Omega_0(\hE)$ is a biased expansion of a closed subgraph of $\Delta$.

{\rm (II)}  Suppose $\Delta$ is inseparable and is represented in an affine part of $\bbP'$, and suppose $\bbP$ has coordinates in a skew field $\fF$.  Then $\hE$ is cross-closed if and only if $\Omega_0(\hE) = \bgr{\Phi}$, where $\Phi\subseteq \fF^+ \Delta$ and $\Phi$ switches to a group expansion $\fH\Delta'$ of a closed subgraph $\Delta'$ of $\Delta$ by a subgroup $\fH\leq \fF^+$.
\end{cor}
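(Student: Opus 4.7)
The plan is to mirror the proof of Corollary \ref{CD:mencross} almost verbatim, swapping the Menel{\ae}an cross-flat machinery for its orthographic counterpart supplied by Lemma \ref{LD:orthorank} and Theorem \ref{TD:orthorep}.

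For necessity in (I), assume $\hE$ is cross-closed and set $E' := \{e \in E(\Delta) : \hE \cap l_e \neq \eset\}$, with $\Delta'$ the corresponding subgraph of $\Delta$. To show $E'$ is closed in $\Delta$, take a path $P = e_1 \cdots e_k$ in $\Delta'$ whose endpoints are joined by an edge $e \in E(\Delta)$, and select $\te_i \in \hE \cap l_{e_i}$. The set $\{\te_1,\ldots,\te_k\}$ is the edge set of a path in $\Omega_0(\hE)$, hence balanced, so by Lemma \ref{LD:orthorank} it has projective rank $k$; thus $t := \Span\{\te_1,\ldots,\te_k\}$ is a $(k-1)$-dimensional cross-flat. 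Because $\bz'(e) \in \Span \bz'(P)$, the edge line $l_e$ sits inside the $k$-dimensional flat $t \vee \he_0 = \Span(\bz'(P),\he_0)$, and the modular law in this flat forces $t \cap l_e$ to be a single point distinct from $\he_0$. Cross-closure puts this point into $\hE$, giving $e \in E'$. The same argument, applied to a circle of $\Delta'$ with all but one covering edge specified, both produces and pins down the final balanced covering edge, so $\Omega_0(\hE) \downa \Delta'$.

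For sufficiency in (I), suppose $\Omega_0(\hE)$ is a biased expansion of a closed subgraph $\Delta'$. Let $Q \subseteq \hE$ span a cross-flat $t$ meeting an edge line $l_e$ at a point $q$, and assume $q \notin \hE$ for contradiction. Choose $Q = \{q_1,\ldots,q_k\}$ minimal with $q \in \Span Q$, so $Q \cup \{q\}$ is a minimal dependent set of the projective representation contained in $t$. By Lemma \ref{LD:orthorank} together with the matroid isomorphism of Theorem \ref{TD:orthorep}, $Q \cup \{q\}$ corresponds to a balanced circle of $\Omega_0(\hE(\Delta,\he_0))$. Projecting via $p$ yields a circle $e_1\cdots e_k\cdot e$ in $\Delta$ with all of $e_1,\ldots,e_k \in E(\Delta')$, so closedness of $\Delta'$ forces $e \in E(\Delta')$. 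Now $\Omega_0(\hE(\Delta,\he_0)) \downa \Delta$ by Proposition \ref{PD:orthoall}(I) and $\Omega_0(\hE) \downa \Delta'$ by assumption; since $\Omega_0(\hE) \subseteq \Omega_0(\hE(\Delta,\he_0))$, the two biased-expansion uniqueness clauses force the unique edge in $p\inv(e)$ that closes $\te_1,\ldots,\te_k$ to a balanced circle to coincide in both graphs. That edge corresponds to $q$, so $q \in \hE$, a contradiction.

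Part (II) follows by combining (I) with Proposition \ref{PD:orthoall}(II) and Lemma \ref{L:subgroupexp}. Coordinatizing $\bbP$ by $\fF$ and placing $\he_0$ at infinity as in Proposition \ref{PD:orthoall}(II) realizes $\Omega_0(\hE(\Delta,\he_0))$ as $\bgr{\fF^+\Delta}$; restricting the gain function to the edges corresponding to $\hE$ produces a gain subgraph $\Phi \subseteq \fF^+\Delta$ with $\bgr{\Phi} = \Omega_0(\hE)$. Part (I) identifies $\bgr{\Phi}$ as a biased expansion of a closed $\Delta'$, and Lemma \ref{L:subgroupexp}, applied to $\Delta'$ (or to its inseparable pieces), promotes $\Phi$ up to switching to a subgroup expansion $\fH\Delta'$ for some $\fH \leq \fF^+$. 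The main obstacle I anticipate is the sufficiency half of (I): one has to simultaneously invoke the biased-expansion uniqueness in both $\Omega_0(\hE) \downa \Delta'$ and $\Omega_0(\hE(\Delta,\he_0)) \downa \Delta$ and track which edges cover which under $p$, through the projective-dependence-matroid identification of Theorem \ref{TD:orthorep}. The necessity half amounts to a routine modular-lattice dimension count analogous to the one in Corollary \ref{CD:mencross}(I), and (II) is bookkeeping once (I) and Proposition \ref{PD:orthoall}(II) are in hand.
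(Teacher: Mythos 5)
Your proposal is correct and takes essentially the same route as the paper: the paper's own proof of Corollary \ref{CD:orthocross} simply states that the argument is ``very similar to that of Corollary \ref{CD:mencross}'' with edge lines $l_e=\he_0\bz'(e)$ replacing $l_e=\hv\hw$, and leaves the adjustments to the reader. You have carried out precisely those adjustments (using Lemma \ref{LD:orthorank} and Theorem \ref{TD:orthorep} in place of the Menel{\ae}an machinery, and Proposition \ref{PD:orthoall}(II) plus Lemma \ref{L:subgroupexp} for part (II)), and the details check out.
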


\begin{proof}  
The proof is very similar to that of Corollary \ref{CD:mencross}.  The difference is that instead of edge lines $l_e = \hv\hw$ for $e_{vw} \in E(\Delta)$ that join node points we have edge lines $l_e=\he_0\bz'(e)$ that are projected (from $\he_0$) to edge points.  Nevertheless, the relationships between lines are similar, so we permit the reader to make the necessary adjustments.
\end{proof}

%%%
\subsubsection{Affine orthography}\label{orthoaff}\

There is an affine version of this representation.  Suppose $\bz'(E(\Delta))$ can be contained in an affine part of $\bbP'$, that is, $\bbA' = \bbP' \setminus h'$ for a hyperplane $h'$ of $\bbP'$.  
(By the Critical Theorem, mentioned in the proof of Proposition \ref{PD:orthoall}(II), if $\bbP=\bbP(\fF)$ and $\Delta$ is finite, this assumption is satisfied if and only if $\Delta$ is $\#\fF$-colorable.)  
Then we can treat $h_\infty := h' \vee \he_0$ as the ideal hyperplane, replacing $\bbP$ and $\bbP'$ by the affine spaces $\bbA = \bbP' \setminus h_\infty$ and $\bbA'$.  We may call $\hE$ an \emph{affine synthetic orthographic representation} of $\Omega\cong\Omega_0(\hE)$.  The now-ideal point $\he_0$ implies a direction in $\bbA$ transverse to $\bbA'$ and the edge lines are parallel lines in the direction of $\he_0$, one through $\bz'(e)$ for each edge $e\in E(\Delta)$.  A cross-flat is now an affine flat that does not projectively span $\he_0$; equivalently (if we restrict to cross-flats spanned by subsets of $\hE$), it is not parallel to any edge line.  Theorem \ref{TD:orthorep} remains true, of course, and so do Corollary \ref{CD:orthobal} and Proposition \ref{PD:orthoall} if $\bbA$ replaces $\bbP$ in the latter.

%%%
\subsubsection{Desarguesian orthography}\label{orthodes}\

In the case of a Desarguesian space $\bbP(\fF)$ our construction is equivalent to the analytic one in Section IV.4.1 by projection, a fact we now explain.  Analytically, for a gain graph $\Phi$ with gains in $\fF^+$ we represent the extended lift matroid $L_0(\Phi)$ in $\fF^{1+N} := \fF \times \fF^N$ by a mapping
\begin{equation}
\bz_\Phi(e) = \begin{cases} 
(\phi(e_{vw}), \hw - \hv) &\text{for a link } e_{vw}, \\
(1, \mathbf0) &\text {for a half edge or } e_0
\end{cases}
\label{E:liftrep}
\end{equation}
of $E(\Phi)$ into $\fF^{1+N}$.  
We call $\bz_\Phi$ an \emph{analytic orthographic representation of $\Omega$} if it represents an $\fF^+$-gain graph $\Phi$ such that $\bgr{\Phi} \cong \Omega$.  

Vector coordinates for $\fF^{1+N}$ are $(x_0,\bx')$; thus, $x_0 = \phi(e)$ or $1$ for $e \in E(\Phi) \cup \{e_0\}$.  Homogeneous coordinates are $[x_0,\bx']$ for the same point in the projective space $\bbP(\fF)$ that results from treating the lines of $\fF^{1+N}$ as points and vector subspaces as flats in the usual manner.
Thus $\bz_\Phi$ gives rise to a projective representation $\bar\bz_0$ of $L_0(\Phi)$.  
The content of the next theorem is that what we get is a synthetic representation and that all synthetic representations of biased graphs in $\bbP(\fF)$ arise in this way.  

\begin{thm}\label{T:orthoagrees}
In a Desarguesian geometry over $\fF$, the synthetic orthographic representations of extended lift matroids of biased graphs are the same as the projections from the origin of the analytic orthographic representations of the gain graphs $\Phi$ with gains in $\fF^+$.
\end{thm}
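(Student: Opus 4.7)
The plan is to verify both inclusions: that a projected analytic representation is synthetic, and that every synthetic representation in $\bbP(\fF)$ arises this way. The key bridge in both directions is a careful choice of homogeneous coordinates on $\bbP(\fF)$ together with the projective uniqueness of graphic-matroid representations.

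\medskip\noindent\textbf{Projection of analytic gives synthetic.} Given an $\fF^+$-gain graph $\Phi$ with $\bgr\Phi = \Omega$, take the analytic representation $\bz_\Phi$ of \eqref{E:liftrep} living in $\fF \times \fF^N$, and let $\bar\bz_0$ be its central projection from the origin into $\bbP(\fF)$. I set $\bbP' := \{[0,\bx']: \bx' \in \fF^N\}$ and $\he_0 := [1,\mathbf0]$. Because $\bz_\Phi(e_0) = (1,\mathbf0)$ we have $\bar\bz_0(e_0)=\he_0$. The restriction of $\bar\bz_0$ to the spanning-tree edges of $\Delta = \|\Omega\|/{\sim}$ gives $[0,\hw-\hv]$, which is the standard projective representation $\bz'$ of $G(\Delta)$ inside $\bbP'$. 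Each projected point $\bar\bz_0(e_{vw}) = [\phi(e_{vw}),\hw-\hv]$ lies on the edge line $\he_0\vee\bz'(e_{vw})$, so $\bar\bz_0(E(\Phi))$ is a subset $\hE$ of $\hE(\Delta,\he_0)\setminus\{\he_0\}$ as required by Section \ref{orthoproj}. By Theorem~\ref{TD:orthorep}, $\hE$ defines a biased graph $\Omega_0(\hE)$ whose extended lift matroid is isomorphic to $M(\hE\cup\{\he_0\})$, and the isomorphism sends $e\mapsto\bar\bz_0(e)$, $e_0\mapsto\he_0$. The analytic results (Theorem IV.4.1) say the same matroid is represented analytically. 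Hence $\bgr\Phi$ and $\Omega_0(\hE)$ have the same underlying graph and the same $L_0$, so they have the same class of balanced circles (a circle is balanced iff it is dependent in $L$), i.e.\ $\Omega_0(\hE)=\bgr\Phi$. This gives one direction.

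\medskip\noindent\textbf{Synthetic yields an analytic lift.} Conversely, let $\hE\subseteq\hE(\Delta,\bz',\he_0)$ be a synthetic orthographic representation of $\Omega$. I choose coordinates on $\bbP(\fF)$ so that $\he_0=[1,\mathbf0]$ and $\bbP' = \{x_0=0\}$. Using that the graphic matroid representation is projectively unique, I apply a projective automorphism fixing $\he_0$ and stabilizing $\bbP'$ to bring $\bz'$ into the standard form $\bz'(e_{vw})=[0,\hw-\hv]$, where $(\hv)_{v\in N}$ is a basis of $\bbP'$. Every point of the edge line $\he_0\vee\bz'(e_{vw})$, other than $\he_0$, has unique homogeneous coordinates of the form $[\alpha,\hw-\hv]$ for some $\alpha\in\fF$; so for each edge $e$ of $\Omega$ I define $\phi(e_{vw}):=\alpha_e\in\fF^+$ where $\he=[\alpha_e,\hw-\hv]$ (and $\phi$ is orientation-reversed to $-\alpha_e$ in the direction $w\to v$, consistent with the sign convention on $\hw-\hv$). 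This is the gain graph $\Phi=(N,E,\phi)$, and by construction $\bz_\Phi(e)=(\phi(e_{vw}),\hw-\hv)$ projects to $\he$, so $\bar\bz_0 = $ the inclusion $\hE\hookrightarrow\bbP(\fF)$.

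\medskip\noindent\textbf{Balance agreement.} It remains to check that $\bgr\Phi=\Omega$, i.e.\ that the gain-graph class of balanced circles coincides with the synthetic class given by cross-flats. For a circle $C=e_1\cdots e_l$ traversed coherently, the standard telescoping identity in $\fF^N$ gives $\sum_i \pm(\hw_i-\hv_i)=\mathbf0$, so
\[
\sum_{i} \pm \bz_\Phi(e_i) \;=\; \Big(\sum_i \pm \phi(e_i),\mathbf0\Big).
\]
This is a dependence showing $\he_0\in\Span\hC$ iff $\sum_i\pm\phi(e_i)\ne 0$ (and otherwise $\hC$ is itself dependent with the $\he_0$-coordinate summing to $0$). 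Thus the circle is balanced in the synthetic sense (its span is a cross-flat) exactly when $\phi(C)=0$, the gain-graph balance condition. So $\bgr\Phi=\Omega_0(\hE)=\Omega$, completing the second direction.

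\medskip\noindent\textbf{Expected obstacle.} The only nontrivial step is the coordinate normalization for the synthetic side: one must invoke projective uniqueness of $G(\Delta)$-representations to bring $\bz'$ to the canonical $[0,\hw-\hv]$ form while simultaneously fixing $\he_0$ and $\bbP'$. Once coordinates are aligned, the balance agreement falls out of the telescoping identity above; the rest is bookkeeping between $L_0(\Omega_0(\hE))$ (Theorem~\ref{TD:orthorep}) and the analytic representation Theorem IV.4.1.
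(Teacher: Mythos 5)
Your proposal is correct and follows essentially the same route as the paper: one direction by central projection of the analytic representation (with the edge lines arising as projections of the planes through $\he_0$ and the fibers), the other by normalizing $\bz'$ to standard coordinates via the projective uniqueness of graphic-matroid representations (the paper does this through Lemma \ref{L:graphicrep}) and reading the gain off the $x_0$-coordinate. Your explicit telescoping check that cross-flat balance matches $\phi(C)=0$ is a useful elaboration the paper leaves implicit; the only slip is calling the standard representation $\bz'(e_{vw})=[0,\hw-\hv]$ a ``restriction'' of $\bar\bz_0$, when it is really the projection of the underlying graphic representation on which $\bz_\Phi$ is based.
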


\begin{proof}
Suppose given an analytic representation $\bz_\Phi : E(\Phi) \to \fF^{1+N}$ with image $\hE$, based on $\bz': E(\Delta) \to \fF^N$.  We treat $\fF^N$ as the subspace $\{0\}\times\fF^N$ of $\fF^{1+N}$.  Projection from the origin turns $\fF^{1+N}$ into a projective space $\bbP$ with hyperplane $\bbP'$ as the image of $\fF^N$.  We use the notation $\bar v$ for the projective point derived from a vector $v$ and similar notation for mappings, i.e., $\bar\bz: E(\Phi) \to \bbP$ and $\bar\bz': E(\Delta) \to \bbP$.  For instance, the projective point $\bar\he_0$ is not in $\bbP'$ since the vector $\he_0=(1,\mathbf0) \notin \fF^N$, and $\bar\bz'(E(\Delta)) \subseteq \bbP'$ since $\bz'(E(\Delta)) \subseteq \fF^N$.

Under projection, linear dependence and independence become projective dependence and independence; therefore $\bar\bz'$ is a projective representation of $G(\Delta)$ and $\bar\bz_\Phi$ is a projective representation of $L_0(\Phi)$.  The plane spanned by $\he_0$ and the image of $p\inv(e)$ for an edge $e \in E(\Delta)$ becomes a line through $\bar\bz'(e)$ in the direction of $\bar\he_0$; it is the edge line $l_e$ of a synthetic affinographic representation.  Thus we have the entire structure of a synthetic representation of $L_0(\Phi)$.

Conversely, let $\bz: E(\Omega) \to \hE$ be a synthetic representation of $L_0(\Omega)$ in $\bbP$, a projective space over $\fF$, with associated representation $\bz': E(\Delta) \to \bbP'$.  We can assume, by enlarging $\bbP$ if necessary, that $\bbP'$ is large enough to contain an independent set $\hN := \{\hv : v \in N\}$ with respect to which $\bz'(E(\Delta))$ has standard coordinates (Lemma \ref{L:graphicrep} shows that and more), and we can assume $\bbP' = \Span(\hN\cup\{\he_0\})$ by restricting $\bbP$ to $\bbP' \vee \he_0$.  Since $\bbP$ is coordinatized and spanned by the independent set $\hN\cup\{\he_0\}$, we can assume the homogeneous coordinates of $\bbP$ represent $\he_0$ as $[1,\mathbf0]$.  As $\bbP$ is the quotient of $\fF^{1+N}$ under central projection, all the points of interest in $\bbP$, that is, $\hN, \hE, \he_0$, can be pulled back to vectors with the same linear dependencies as the projective versions have in $\bbP$.  By the choice of coordinate system, each point $\he$ representing an edge $e_{vw}$ of $\Omega$ has homogeneous coordinates $[x_0,\bw-\bv]$ and the vector has linear coordinates $(x_0,\bw-\bv)$.  Because we chose standard coordinates for $\bz'(E(\Delta))$ there is no ambiguity in the value of $x_0$ except that it could be replaced by its negative, which corresponds to reversing the direction of $e$.  Thus, if we assign $e$ the gain $\phi(e_{vw}) := x_0$ and, negatively, $\phi(e_{wv}) := -x_0$, we obtain an unambiguous $\fF^+$-gain graph $\Phi$ whose biased graph is $\Omega$ and whose analytic representation $\bz_\Phi$ in $\fF^{1+N}$ has the property that $\bar\bz_\Phi(e)$, the projectivization of $\bz_\Phi(e)$, equals $\he$.
\end{proof}

%%%%%%%
\subsection{Affinographic arrangements}\label{orthoaffino}\

%%%
\subsubsection{Affinographic hyperplanes}\label{affino}\

In an affine geometry $\bbA$, consider a family $\cA$ of hyperplanes that fall into parallel classes $\cA_i$ for $i \in I$, an index set.  In the projective completion $\bbP$ the hyperplanes of $\cA$ have ideal parts that are hyperplanes of the ideal hyperplane $h_\infty$; call this family of ideal hyperplanes $\cA_\infty$.  All the members of a parallel class $\cA_i$ have the same ideal part, call it $h_i$; thus $\cA_\infty = \{ h_i : i \in I \}$.  The family $\cA_\infty$ has a matroid structure $M(\cA_\infty)$ (it is the matroid of the dual points $h_i^*$ in the dual of the projective geometry $h_\infty$).  
If this matroid is graphic, isomorphic to $M(\Delta)$ for a graph $\Delta$, we call $\cA$ a \emph{synthetic affinographic hyperplane arrangement} (``arrangement'' here is a synonym for ``family'').  
The edges of $\Delta$ must be links, not half edges.  There may be more than one such $\Delta$; later we explain why that does not matter.  

In Section IV.4.1 we showed that the lift matroid of a gain graph $\Phi$ with gain group $\fF^+$ has a representation by a \emph{coordinatized affinographic hyperplane arrangement} $\cA[\Phi]$, whose hyperplane equations have the form $x_j-x_i = \phi(e_{ij})$.  In the present section we prove that synthetic affinographic arrangements are precisely the synthetic expression and generalization of those coordinatized affinographic arrangements.

First, given $\cA$, we construct a biased graph $\Omega(\cA)$ of which $\cA$ is a lift representation by hyperplanes; to say it precisely, such that $M(\cA_\bbP) = L_0(\Omega(\cA))$.  Here $\cA_\bbP$, the \emph{projectivization} of the affine arrangement $\cA$, is defined to be $\{ h_\bbP : h \in \cA\} \cup \{h_\infty\}$, $h_\bbP$ in turn being defined as the projective closure of $h$.  The underlying graph $\|\Omega(\cA)\|$ is constructed from a graph $\Delta$ such that $G(\Delta) \cong M(\cA_\infty)$.
$\Delta$ has one edge for each ideal part $h_i \in \cA_\infty$; we get $\|\Omega(\cA)\|$ if we replace each edge $e_i \in E(\Delta)$, corresponding to an ideal hyperplane $h_i$, by one edge for each $h \in \cA_i$.  Thus, if $\Delta = (N,E(\Delta))$, then $N(\Omega(\cA))=N$ and there is a natural surjection $p: E(\Omega(\cA)) \to E(\Delta)$, which is a graph homomorphism if we take $p|_N$ to be the identity.  

To define the class $\cB(\Omega(\cA))$ of balanced circles, we must know which edge sets are circles in $\Omega(\cA)$.  We get that information from $\Delta$: a circle in $\Omega(\cA)$ is either a digon $\{e,f\}$, where $p(e)=p(f)$, or a bijective preimage under $p$ of a circle in $\Delta$.  Define a digon in $\Omega(\cA)$ to be unbalanced, and let any other circle $C$ be balanced if and only if the intersection of its corresponding hyperplanes, $\bigcap_{e \in C} h_e$, is nonempty.  This defines $\cB(\Omega(\cA))$ so we have defined $\Omega(\cA)$, but we have not proved it is a biased graph.

\begin{thm}\label{T:affinobias}
Let $\cA$ be a synthetic affinographic arrangement of hyperplanes in an affine geometry $\bbA$.  Then $\Omega(\cA)$ is a biased graph, and $\cL(\cA) \cong \Latb\Omega(\cA)$ under the correspondence $h_e \mapsto e \in E(\Omega(\cA))$.  Furthermore, $\cL(\cA_\bbP) \cong \Lat L_0(\cA_\bbP)$ under the same correspondence augmented by $h_\infty \mapsto e_0$.  The intersection of $\cS \subseteq \cA$ is nonempty if and only if the corresponding edge set $S \subseteq E(\Omega(\cA))$ is balanced.
\end{thm}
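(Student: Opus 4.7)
The strategy is to reduce the theorem to the synthetic orthographic representation theorem (Theorem~\ref{TD:orthorep}) via projective duality, working in the dual $\bbP^*$ of the projective completion $\bbP$ of $\bbA$. First I would dualize: each hyperplane $h \in \cA$ becomes a point $h^* \in \bbP^*$, and I set $\hE := \{h^* : h \in \cA\}$ and $\he_0 := h_\infty^*$. Since the matroid of a set of hyperplanes equals the matroid of their dual points, the correspondences $h \leftrightarrow h^*$ and $h_\infty \leftrightarrow \he_0$ give $M(\cA_\bbP) = M(\hE \cup \{\he_0\})$.

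Next I would exhibit the orthographic structure in $\bbP^*$. For each parallel class $\cA_i$, all its members share the ideal part $h_i = h_\bbP \cap h_\infty$, so the $\bbP^*$-line $\ell_i$ joining $\he_0$ to any $h^* \in \hE_i := \{h^* : h \in \cA_i\}$ is exactly the pencil of hyperplanes of $\bbP$ containing the codim-$2$ flat $h_i$; in particular $\hE_i \cup \{\he_0\} \subseteq \ell_i$. Pick any hyperplane $\bbP' \subset \bbP^*$ with $\he_0 \notin \bbP'$ and set $\bz'(e_i) := \ell_i \cap \bbP'$. Projection from $\he_0$ identifies the pencil of lines through $\he_0$ in $\bbP^*$ with the dual geometry $(h_\infty)^*$ by $\ell_i \mapsto h_i^*$, so $\{\bz'(e_i)\}_{i \in I}$ has the same matroid as $\{h_i^*\} \subset (h_\infty)^*$, which by hypothesis equals $M(\cA_\infty) \cong G(\Delta)$. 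Hence $\bz' : E(\Delta) \to \bbP'$ is a projective representation of $G(\Delta)$ and $\hE \subseteq \hE(\Delta,\he_0) \setminus \{\he_0\}$ lies in its union of edge lines, so Theorem~\ref{TD:orthorep} applies and yields a biased graph $\Omega_0(\hE)$ with $L_0(\Omega_0(\hE)) \cong M(\hE \cup \{\he_0\}) = M(\cA_\bbP)$.

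I would then identify $\Omega_0(\hE)$ with $\Omega(\cA)$. Their underlying graphs coincide by construction. For balance, duality gives, for any edge set $S$ with $\hS := \{h_e^* : e \in S\}$, the equivalence $\he_0 \in \Span_{\bbP^*}\hS$ iff $h_\infty \supseteq \bigcap_{e \in S} h_{e,\bbP}$ iff $\bigcap_{e \in S} h_e = \eset$ in $\bbA$. For a non-digon circle $C$ this matches the two definitions of balance; for a digon $\{e,f\} \subseteq \cA_i$, $\Span\{\he,\hf\} = \ell_i \ni \he_0$, so $\Omega_0(\hE)$ also declares it unbalanced, in agreement with $\Omega(\cA)$. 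Thus $\Omega(\cA) = \Omega_0(\hE)$, giving the biased-graph property and the $L_0$-matroid isomorphism. The lattice claims then follow from Corollary~\ref{CD:orthobal}: closed balanced sets of $L_0(\Omega(\cA))$ correspond to intersections $\hE \cap t$ for cross-flats $t$ of $\bbP^*$, which dualize to projective flats of $\bbP$ with affine points, whose affine parts fill out the nonempty elements of $\cL(\cA)$, yielding $\cL(\cA) \cong \Latb\Omega(\cA)$; adjoining $h_\infty \leftrightarrow e_0$ extends this to $\cL(\cA_\bbP) \cong \Lat L_0(\Omega(\cA))$. The final balance-nonemptiness equivalence is the same span-meet duality applied to arbitrary $\cS$.

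The main obstacle I anticipate is the identification $\Omega_0(\hE) = \Omega(\cA)$: the matroid identity $M(\{\bz'(e_i)\}) \cong G(\Delta)$ requires the quotient/projection calculation that identifies $\bbP^*/\he_0$ with $(h_\infty)^*$ via $\ell_i \mapsto h_i^*$, and the digon case needs separate treatment since unbalance of digons in $\Omega(\cA)$ is stipulated by definition rather than derived from hyperplane intersections, so the orthographic cross-flat criterion must be shown to agree trivially.
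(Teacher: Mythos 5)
Your proposal is correct, but it takes a genuinely different route from the paper's. You reduce the whole theorem to Theorem~\ref{TD:orthorep} and Corollary~\ref{CD:orthobal} by passing to the dual geometry $\bbP^*$, where the arrangement becomes a point set on the pencil of lines through $\he_0 = h_\infty^*$; the paper instead gives a direct proof inside $\bbP$, built on an Isthmus Reduction lemma (Lemma~\ref{L:affinoisth}), a balance computation for circles, a linear-class argument parallel to that of Theorem~\ref{T:cev}, and a separate nonemptiness argument for forests (which, notably, is the one step the paper itself carries out in $\bbP^*$). The paper explicitly acknowledges in its closing subsection on duality that the two lift representations are projective duals and that your kind of reduction is available, but states that it deliberately avoided it in order to illustrate direct work with affinographic hyperplanes and because the affine viewpoint is the interesting one there. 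Your approach buys brevity and conceptual unity (one theorem proved, the other inherited), at the cost of some care you correctly anticipate: the identification of $\bbP^*/\he_0$ with $(h_\infty)^*$ sending the line $\ell_i$ to $h_i^*$, which is what makes $\{\bz'(e_i)\}$ represent $G(\Delta)$; the check that each $h^*$ lies on exactly one edge line so the underlying graphs agree; and the digon case, where the stipulated unbalance in $\Omega(\cA)$ must be matched against the cross-flat criterion (your observation that two distinct points of $\ell_i$ span $\ell_i \ni \he_0$ settles it). The one point worth stating explicitly if you write this up is the translation $\he_0 \in \Span\hS \Leftrightarrow \bigcap_{e\in S} h_{e,\bbP} \subseteq h_\infty \Leftrightarrow \bigcap_{e\in S} h_e = \eset$, valid for arbitrary (not just finite) $S$ because flats of $\bbP^*$ are by definition order-dual to flats of $\bbP$; with that in hand your lattice identifications and the final balance--nonemptiness equivalence all follow as you describe.
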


We note that $\cL(\cA) \cong \Latb\Omega(\cA)$ is another way of asserting a matroid isomorphism $M(\cA_\bbP) \cong L_0(\Omega(\cA))$.

\begin{proof}
We write $E := E(\Omega(\cA))$ and $t_\bbA(S) := \bigcap_{e \in S} h_e$ if $S\subseteq E$.  Recall that the intersection poset of a hyperplane arrangement is partially ordered by reverse inclusion; $\cA_\infty$ is an upper interval in $\cA_\bbP$.  The cover relation is $x \gtrdot y$ ($x$ covers $y$).

The first step is to establish an isthmus reduction formula for an affinographic arrangement.  An \emph{isthmus} in a graph is an edge whose deletion converts one component into two.  

\begin{lem}[Isthmus Reduction]\label{L:affinoisth}
If $S \subseteq E$ has an isthmus $e$ and $t_\bbA(S\setminus e) \neq \eset$, then $t_\bbA(S) \neq \eset$ and $t_\bbA(S)$ is a hyperplane in $t_\bbA(S\setminus e)$.  
\end{lem}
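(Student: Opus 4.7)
The plan is to translate the graph-theoretic isthmus condition on $e \in S$ into an affine-geometric non-parallelism condition relating $h_e$ to the flat $t_\bbA(S \setminus e)$, and then to invoke the standard affine dimension count for the intersection of a flat with a non-parallel hyperplane. Throughout I will use the matroid isomorphism $M(\cA_\infty) \cong G(\Delta)$ built into the definition of a synthetic affinographic arrangement.

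First I would observe that since $e$ is an isthmus of $S$ in $\|\Omega(\cA)\|$, no circle of $\Omega(\cA)$ contained in $S$ passes through $e$. Because digons are circles of $\Omega(\cA)$, this forces $e$ to have no parallel edge in $S$, so $p$ is injective on $S$ at $e$ and $p(e)$ lies in $p(S)$ with unique preimage $e$. Next I claim $p(e)$ is not in the graphic closure of $p(S\setminus e)$ in $\Delta$: otherwise $p(S)$ would contain a circle $C$ of $\Delta$ through $p(e)$, and since $\Omega(\cA)\downarrow\Delta$ in the set-theoretic sense described in Section \ref{affino} (any bijective preimage of a circle of $\Delta$ is a circle of $\Omega(\cA)$), we could lift $C$ to a circle of $\Omega(\cA)$ using $e$ for $p(e)$ and any preimages in $S$ for the other edges, contradicting the isthmus hypothesis.

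Next I would push this conclusion through the matroid isomorphism. The ideal part $h_e^\infty$ of $h_e$ equals the ideal part of every hyperplane parallel to $h_e$, namely $h_{p(e)}\in\cA_\infty$, and similarly for every $f \in S\setminus e$. Hence
\[
t_\bbA(S\setminus e)_\infty \;=\; \bigcap_{f\in S\setminus e} h_f^\infty \;=\; \bigcap_{i \in p(S\setminus e)} h_i.
\]
Closure in $M(\cA_\infty)$ corresponds to containment of intersections, so the closure statement in $G(\Delta)$ derived above translates to $h_e^\infty \not\supseteq t_\bbA(S\setminus e)_\infty$. Equivalently, the affine flat $t_\bbA(S\setminus e)$ is neither parallel to $h_e$ nor contained in $h_e$.

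Finally I would conclude via the dimension formula in the projective completion. Passing to $\bbP$, we have $t_\bbA(S\setminus e)_\bbP \cap (h_e)_\bbP$ satisfying the modular identity, so its projective dimension is $\dim t_\bbA(S\setminus e) - 1$; the non-parallelism just proved shows this intersection contains an ordinary (non-ideal) point, so it meets $\bbA$, giving $t_\bbA(S) = t_\bbA(S\setminus e)\cap h_e \neq \eset$ and $\dim t_\bbA(S) = \dim t_\bbA(S\setminus e) - 1$, i.e., a hyperplane in $t_\bbA(S\setminus e)$. The main technical point to get right is the double translation: lifting circles of $\Delta$ to circles of $\Omega(\cA)$ (which requires careful use of the digon convention so that the isthmus rules out parallels as well as ordinary circles), and the dictionary between matroid closure in $M(\cA_\infty)$ and parallelism of affine flats. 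Once those are in hand, the geometric step is essentially a one-line application of the modular law.
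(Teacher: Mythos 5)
Your proof is correct and takes essentially the same route as the paper's: both arguments reduce the isthmus hypothesis to the statement that $p(e)\notin\clos_\Delta(p(S\setminus e))$, transfer that through the isomorphism $M(\cA_\infty)\cong G(\Delta)$ to the non-containment $t_\infty(S\setminus e)\not\subseteq (h_e)_\bbP\cap h_\infty$, and finish with a corank count in the projective completion. The only difference is presentational --- the paper packages the final count as a length-two interval (a Hasse-diagram diamond) in the geometric intersection lattice where you invoke the modular law directly, and you are in fact somewhat more explicit than the paper in deriving $p(e)\notin\clos_\Delta(p(S\setminus e))$ from the isthmus hypothesis by ruling out digons and lifted circles.
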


\begin{proof}
We show that $t_\bbA(S) \neq \eset$.  The proof takes place in $\bbP$.  Define $t_\bbP(S) := \bigcap_{e \in S} (h_e)_\bbP$; if $t_\bbA(S) \neq \eset$ this is the projective completion of $t_\bbA(S)$.  Observe that $t_\infty(R)$ covers or equals $t_\bbP(R)$ for every edge set $R$.  Let $t_\infty(S) := t_\bbP(S) \cap h_\infty$.  Because $\cA_\infty$ is the graphic arrangement of $\Delta$ and $e \notin \clos_\Delta(S \setminus e)$, $t_\infty(S) \gtrdot t_\infty(S \setminus e)$ in $\cL(\cA_\infty)$.   As $(h_e)_\bbP$ is a hyperplane, $t_\bbP(S)$ covers or equals $t_\bbP(S \setminus e)$.  Because $t_\bbA(S \setminus e) \neq \eset$, $t_\infty(S \setminus e) > t_\bbP(S \setminus e)$.  We now have the Hasse diagram
$$
\xymatrix{
	&t_\infty(S) \ar[ld]_{\gtrdot,=} \ar[rd]^\gtrdot & \\
t_\bbP(S) \ar[rd]^{\gtrdot,=}	&	&t_\infty(S \setminus e) \ar[ld]_{\gtrdot} \\
	&t_\bbP(S \setminus e) & \\
}
$$
in which the path on the right has length 2.  The intersection lattice is geometric so the path on the left has the same length; it follows that $t_\infty(S) \gtrdot t_\bbP(S) \gtrdot t_\bbP(S \setminus e)$.  We conclude that $t_\bbP(S)$ is not contained in the ideal hyperplane, so $t_\bbA(S) \neq \eset$, and that $t_\bbP(S)$ is a hyperplane in $t_\bbP(S \setminus e)$.  It follows that $t_\bbA(S)$ is a hyperplane in $t_\bbA(S\setminus e)$.
\end{proof}

\begin{lem}\label{L:affinobal}
Let $C$ be a circle $C$ in $\Omega(\cA)$.  If $C$ is balanced, then $t_\bbA(C) = t_\bbA(C\setminus e) \neq \eset$ for each $e \in C$ and $\codim t_\bbA(C) = \#C-1$.  If $C$ is unbalanced, then $t_\bbA(C) = \eset$.
\end{lem}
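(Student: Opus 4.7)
The plan is to handle the unbalanced and balanced cases separately. For an unbalanced digon $C=\{e,f\}$ with $p(e)=p(f)$, the hyperplanes $h_e,h_f$ lie in a common parallel class $\cA_i$ and are distinct, since the construction of $\Omega(\cA)$ puts its edges in bijection with the hyperplanes of $\cA$; two distinct parallel affine hyperplanes are disjoint, so $t_\bbA(C)=\eset$. For a non-digon unbalanced $C$, the conclusion $t_\bbA(C)=\eset$ is the very defining clause of $\cB(\Omega(\cA))$. This disposes of the unbalanced case.

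For a balanced circle $C$ of length $n$, the restriction $p|_C$ is a bijection onto a circle in $\Delta$, so the ideal parts $\{(h_f)_\infty : f\in C\}$ form a circuit of size $n$ in $M(\cA_\infty)\cong G(\Delta)$. Fix $e\in C$ and set $P:=C\setminus e$, a path of length $n-1$ in $\|\Omega(\cA)\|$. First I build $P$ up one edge at a time starting from a single edge $e_1$ with $t_\bbA(\{e_1\})=h_{e_1}$; each newly added edge is a pendant, hence an isthmus of the growing subpath, so Lemma \ref{L:affinoisth} applies inductively and yields $t_\bbA(P)\neq\eset$ with codimension $n-1$. The circuit structure at infinity gives $(h_e)_\infty \supseteq t_\infty(P)$, since in any hyperplane matroid each circuit member contains the intersection of the other circuit hyperplanes; equivalently $t_\infty(C)=t_\infty(P)$, a flat of codimension $n$ in $\bbP$.

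It remains to upgrade this to the affine statement $t_\bbA(P)\subseteq h_e$, which will give $t_\bbA(C)=t_\bbA(P)=t_\bbA(C\setminus e)$ with codimension $n-1$, completing the lemma. I pass to the projective completion and use dimension counting: balance gives $t_\bbA(C)\neq\eset$, so $t_\bbP(C)$ is not contained in $h_\infty$ and hence strictly contains $t_\bbP(C)\cap h_\infty=t_\infty(C)$, forcing $\codim_\bbP t_\bbP(C)=n-1$. Since $t_\bbP(C)\subseteq t_\bbP(P)$ and both have codimension $n-1$, they coincide; hence $(h_e)_\bbP\supseteq t_\bbP(P)$ and therefore $h_e\supseteq t_\bbA(P)$, as required.

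The main obstacle is precisely this final projective dimension count: it is where the balance hypothesis $t_\bbA(C)\neq\eset$ is genuinely consumed, to force $t_\bbP(C)$ to remain affine rather than slip into $h_\infty$ and coincide with $t_\infty(P)$. Everything else is a routine application of Isthmus Reduction together with the translation between circuits in $M(\cA_\infty)$ and hyperplane dependencies at infinity.
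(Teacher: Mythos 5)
Your proof is correct and follows essentially the same route as the paper's: isthmus reduction along the path $C\setminus e$, the identity $t_\infty(C)=t_\infty(C\setminus e)$ coming from the graphic circuit at infinity, and a dimension/covering argument in the projective completion that uses balance ($t_\bbA(C)\neq\eset$) to force $t_\bbP(C)=t_\bbP(C\setminus e)$. The only cosmetic differences are that you treat the digon case explicitly and phrase the final step as a codimension count rather than via the covering relations in the intersection lattice.
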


\begin{proof}
We defined $C$ to be balanced if $t_\bbA(C)$ is nonempty, so the task is to prove that a balanced circle has $t_\bbA(C) = t_\bbA(C\setminus e)$ and $\codim t_\bbA(C) = \#C-1$.  Let $e \in C$.  
Since $\cA_\infty$ represents $G(\Delta)$, $t_\infty(C) = t_\infty(C \setminus e)$.  By isthmus reduction $t_\bbA(C \setminus e) \neq \eset$, so $t_\bbP(C \setminus e) \lessdot t_\infty(C \setminus e)$.  As $t_\infty(C) \geq t_\bbP(C) \geq t_\bbP(C \setminus e)$, $t_\bbP(C)$ equals either $t_\infty(C)$ or $t_\bbP(C \setminus e)$.  In the former case $t_\bbA(C) = \eset$.  In the latter case $t_\bbA(C) = t_\bbA(C \setminus e)$ and $\codim t_\bbA(C) = \#C-1$.
\end{proof}

The proof that $\cB(\Omega(\cA))$ is a linear class is also carried out in $\bbP$.  It is similar to that for Theorem \ref{T:cev}.

The next step is to prove that for a balanced edge set $S$, $t_\bbA(\bcl S) = t_\bbA(S) \neq \eset$.  Since $\bcl(\bcl S) = \bcl S$ if $S$ is balanced (Proposition I.3.5) and consequently $\bcl S = \bcl F$ for any maximal forest $F \subseteq S$, it suffices to prove that for a forest $F \subseteq E(\Omega)$, $t_\bbA(\bcl F) = t_\bbA(F) \neq \eset$.  An edge $e \in (\bcl F) \setminus F$ is in a balanced circle $C$ such that $C \setminus e \subseteq F$.  Since $t_\bbA(C) = t_\bbA(C \setminus e)$, we can infer that $t_\bbA(F \cup \{e\}) = t_\bbA(F)$.  It easily follows that $t_\bbA(\bcl F) = t_\bbA(F)$, using induction or Zorn's lemma according as $(\bcl F) \setminus F$ is finite or infinite.  

Thus the question reduces to proving that $t_\bbA(F) \neq \eset$.  We do so in the dual geometry $\bbP^*$.  
The hyperplanes $(h_e)_\bbP$ and $h_\infty$ become points $q_e$ and $q_\infty$; for $S \subseteq E$ let $S_\bbP := \{ q_e : e\in S\}$.  
The intersection $h_e \cap h_\infty$, which is a hyperplane of $h_\infty$, becomes the line $q_eq_\infty$.  
The arrangement $\cA_\infty$ becomes the point set $\bar E := \{\bar q_e := q_eq_\infty : e \in E\}$ in the projection $\bbP^*/q_\infty$ (that is the projective geometry of all lines through $q_\infty$ in $\bbP^*$).  Thus $M(\cA_\infty) = M(\bar E)$.  This matroid is $G(\Delta)$ under the correspondence $\bar q_e \leftrightarrow p(e) \in E(\Delta)$.  It is also the simplification of the contraction matroid $M(E_\bbP \cup \{q_\infty\})/q_\infty$.

The statement that $t_\bbA(F) = \eset$, which is equivalent to $\bigcap_{e \in F} (h_e)_\bbP \subseteq h_\infty$, becomes the statement that $F_\bbP$ spans $q_\infty$ in $\bbP^*$.  That means $F_\bbP \cup q_\infty$ is dependent in $M(F_\bbP \cup \{q_\infty\})$.
Now consider our hypothesis that $F$ is a forest.  The mapping $p|_F$ is injective into $E(\Delta)$ with image $p(F)$ that is a forest in $\Delta$.  Hence, $M(F_\bbP \cup \{q_\infty\})/q_\infty \cong G(\Delta)|p(F)$ (the vertical bar denotes a restriction matroid), which is a free matroid because $p(F)$ is a forest.  
However, the point set $\bar F$, as the contraction of $F_\bbP \cup \{q_\infty\}$, is dependent in $M(F_\bbP \cup \{q_\infty\})/q_\infty$.  This is a contradiction.  Therefore, $t_\bbA(F) \neq \eset$.

We now know that $t_\bbA(S) \neq \eset$ if $S$ is balanced. The inverse implication follows from the definition of balance in $\Omega(\cA)$.

It remains to prove that $\cL(\cA_\bbP) \cong \Lat L_0(\Omega(\cA))$, and that $cL(\cA) \cong \Latb \Omega(\cA)$.  The latter is an immediate corollary of Lemma \ref{L:affinobal}, since a closed, balanced set in $\Omega(\cA)$ is the same as a set $S$ such that $t_\bbA(S)\neq\eset$ and $t_\bbA(S\cup \{e\}) \subset t_\bbA(S)$ for any $e \notin S$.  

The lattice $\Lat L_0(\Omega(\cA))$ has a particular structure (see Theorem II.3.1(A)(b)), namely, $\Latb \Omega(\cA)$ is a lower ideal and its complement is essentially $\Lat \|\Omega(\cA)\|$, which is isomorphic to $\Lat \Delta$ since $\Delta$ is the simplification of $\|\Omega(\cA)\|$.  The precise relationship is that each closed set $S \in \Lat \|\Omega(\cA)\|$ appears in $\Lat L_0(\Omega(\cA))$ as the set $S \cup \{e_0\}$.  As $e_0$ corresponds to $h_\infty$, this upper ideal of $\Lat L_0(\Omega(\cA))$ corresponds to $\cL(\cA_\infty)$, whose structure is that of $\Lat \Delta$ by assumption.  It is easy to verify that the order relations in $\cL(\cA_\bbP)$ agree with those in $\Lat L_0(\Omega(\cA))$.  
That completes the proof.
\end{proof}

The non-uniqueness of $\Delta$ calls for some discussion.  Whitney proved that two graphs of finite order have the same matroid if and only if one can be obtained from the other by three operations, each of which preserves the edge sets of circles (though not necessarily their cyclic order in the circle).  Whitney's \emph{2-operations} are:
\begin{enumerate}[(Wh1)]
\item Combine two components by identifying one node in the first component with one node in the other component.  (The identified node will be a separating node.)
\item The reverse of (Wh1), i.e., splitting a component in two at a separating node.
\item (\emph{Whitney twist}.)  Reversing the attachment of one side of a 2-separation.  Suppose $\{u,v\}$ is a pair of nodes that separates a subgraph $\Delta_1$ from another subgraph $\Delta_2$.  Split $u$ and $v$ into $u_1,v_1$ in $\Delta_1$ and $u_2,v_2$ in $\Delta_2$.  Then recombine by identifying $u_2$ with $v_1$ and $v_2$ with $u_1$.
\end{enumerate}
Graphs related by Whitney's 2-operations are called \emph{2-isomorphic}.  
It is clear from Whitney's proof that the same theorem applies to graphs of infinite order.  
So, $\Delta$ is determined precisely up to Whitney 2-isomorphism.  It is also easy to see that Whitney's theorem applies to the extended lift matroid; thus, $\Omega(\cA)$ is determined only up to 2-isomorphism; but once $\Delta$ has been chosen, $\Omega(\cA)$ and $L_0(\Omega(\cA))$ are completely determined.  

We wish to establish the relationship between synthetic and coordinatized affinographic arrangements; we begin by reviewing the latter in some detail (from Section IV.4.1).
Given $\fF$, suppose we have a gain graph $\Phi=(N,E,\phi)$ (without half edges; $N$ and $E$ may be infinite) with gain group $\fF^+$.  This gain graph has a representation in the $\#N$-dimensional affine space $\bbA^N(\fF)$ by hyperplanes $h_e: x_j-x_i=\phi(e_{ij})$ for each edge $e = e_{ij} \in E$.  Write $\cA[\Phi]$ for this family of hyperplanes; we call it an \emph{affinographic hyperplane arrangement} (``coordinatized'' to distinguish it from the synthetic analog) because it consists of affine translates of graphic hyperplanes $h_{ij}: x_j-x_i=0$.  
The hyperplanes of a parallel class of edges, i.e., all edges $e_{ij}$ for fixed nodes $v_i,v_j$, are a parallel class of hyperplanes whose intersection in the ideal part of $\bbP^N(\fF)$ (the projective completion of $\bbA^N(\fF)$) is a relative hyperplane $h_{ij\infty}$.  These ideal parts $h_{ij\infty}$ are a graphic arrangement associated with $\Delta$, the simplification of $\|\Phi\| := (N,E)$; specifically, they are the ideal parts of the standard graphic arrangement $\cH[\Delta] = \{ h_{ij} : v_iv_j \in E(\Delta) \}$ in $\bbA^N(\fF)$.  
The essential property of $\cA[\Phi]$ for our purposes is that $\cL(\cA[\Phi]) \cong \Latb\Phi$ under the natural correspondence $h_e \leftrightarrow e$; in addition there are the projectivization $\cA_\bbP[\Phi]$ and the ideal part $\cA_\infty[\Phi]$, which satisfy $\cL(\cA_\bbP[\Phi]) \cong \Lat L_0(\Phi)$ and $\cL(\cA_\infty[\Phi]) \cong \Lat G(\|\Phi\|) \cong \Lat G(\Delta)$.  (The latter two isomorphisms can also be expressed as matroid isomorphisms.)

\begin{thm}\label{T:affinodes}
In an affine geometry $\bbA(\fF)$ over a skew field $\fF$, synthetic affinographic hyperplane arrangements in $\bbA(\fF)$ are the same as coordinatized affinographic arrangements.
\end{thm}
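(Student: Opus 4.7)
The plan is to verify the two inclusions implicit in ``are the same as'': first, that every coordinatized affinographic arrangement $\cA[\Phi]$ satisfies the synthetic definition, and second, that every synthetic affinographic arrangement $\cA$ in $\bbA(\fF)$ arises as $\cA[\Phi]$ for some $\fF^+$-gain graph $\Phi$, up to a change of affine coordinates.

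The forward direction is essentially bookkeeping. Given a gain graph $\Phi$ with gains in $\fF^+$ and simplification $\Delta$ of $\|\Phi\|$, the hyperplanes of $\cA[\Phi]$ indexed by a fixed simple edge $e_{ij}\in E(\Delta)$ are all affine translates of the graphic hyperplane $x_j-x_i=0$, so they share a common ideal part, namely the ideal part of that graphic hyperplane. The whole collection $\cA_\infty[\Phi]$ is therefore the ideal trace of the standard graphic arrangement of $\Delta$, which is a classical representation of $G(\Delta)$. Hence $M(\cA_\infty[\Phi]) \cong G(\Delta)$ is graphic, so $\cA[\Phi]$ meets the synthetic definition.

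For the converse I would start with a synthetic affinographic arrangement $\cA$ in $\bbA(\fF)$ together with a graph $\Delta$ such that $M(\cA_\infty)\cong G(\Delta)$ via an indexing of the parallel classes $\cA_i$ by edges of $\Delta$. The crucial step is to invoke the projective uniqueness of representations of graphic matroids over a skew field---namely Lemma \ref{L:graphicrep} cited in the proof of Theorem \ref{T:orthoagrees}---and change coordinates on $\bbA(\fF)$ so that for each edge $e_{jk}\in E(\Delta)$ the ideal hyperplane indexed by $e_{jk}$ coincides with the ideal part of the standard graphic hyperplane $h_{jk}\colon x_k-x_j=0$. In these coordinates any $h \in \cA_{jk}$ is parallel to $h_{jk}$ and therefore has equation $x_k-x_j=c_h$ for a uniquely determined scalar $c_h \in \fF$. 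Setting $\phi(e):=c_h$, with the edge $e$ of $\Omega(\cA)$ that corresponds to $h$ oriented from $v_j$ to $v_k$, yields a well-defined $\fF^+$-gain graph $\Phi$ with $\cA[\Phi]=\cA$.

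The main obstacle will be the coordinate normalization: producing coordinates in which $\cA_\infty$ actually becomes the standard graphic arrangement of $\Delta$. This is exactly where projective uniqueness of graphic representations---and the 2-isomorphism freedom in the choice of $\Delta$ noted after Theorem \ref{T:affinobias}---enters the argument. Once the normalization is in place, the remaining assignment of gains, the check that reversing the orientation of $e$ merely negates $\phi(e)$, and the identity $\cA[\Phi]=\cA$ are routine; any two choices of $\Delta$ within a 2-isomorphism class yield gain graphs producing the same arrangement, which is consistent with the ambiguity already flagged by the authors in the discussion after Theorem \ref{T:affinobias}.
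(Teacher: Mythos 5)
Your proposal is correct and follows essentially the same route as the paper: the forward direction is the same bookkeeping, and the converse normalizes coordinates so that $\cA_\infty$ becomes the ideal trace of the standard graphic arrangement $\cH[\Delta]$ and then reads off the gains as the constant terms $x_k-x_j=c_h$. The only presentational difference is that the paper makes the normalization step explicit by applying the \emph{dual} of Lemma \ref{L:graphicrep} (since the objects being positioned are hyperplanes of $h_\infty$ rather than points), a detail you compress into ``projective uniqueness of graphic representations.''
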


We mean that, given a synthetic affinographic arrangement $\cA$, we can choose the coordinate system so as to express $\cA$ by affinographic equations.  The construction of suitable coordinates will be part of the proof.

One more remark before the proof.  Since every affine geometry beyond planes is coordinatized by a skew field, what is the purpose of Theorem \ref{T:affinodes}?  There are several answers.  First, it is not \emph{a priori} true that the synthetic definition has no examples that do not coordinatize.  A more fundamental answer is that by defining synthetic affinographic arrangements we are axiomatizing affinographic arrangements.  And there is a third answer:  the theorem lets us treat affinographic arrangements of lines in a non-Desarguesian plane; for that see \cite[\qlift]{BGPP}.

\begin{proof}
We already explained how a coordinatized affinographic arrangement is a synthetic affinographic arrangement.  
For the converse let $\cA$ be a synthetic affinographic arrangement in an affine geometry $\bbA(\fF)$ over $\fF$; we construct coordinates that put $\cA$ in coordinatized form.

A \emph{standard representation} of a graphic matroid $G(\Delta)$ is a representation in $\fF^N$, with unit basis vectors $\hv$ for $v \in N$, such that an edge $e_{v_iv_j}$ is represented by $\he = \hv_j-\hv_i$ (or its negative).  This representation is in a vector space; a standard representation in an affine geometry $\bbA$ is obtained by choosing an origin $o$, so $\bbA$ becomes a vector space, and forming a standard representation in that vector space.

\begin{lem}\label{L:graphicrep}
Given a skew field $\fF$, a possibly infinite simple graph $\Delta = (N,E(\Delta))$, and an embedding $e \mapsto \be$ of the matroid $G(\Delta)$ in the ideal  hyperplane $h_\infty$ of a projective space $\bbP(\fF)$, there are an origin $o \in \bbA := \bbP(\fF) \setminus h_\infty$ and a standard representation $e \mapsto \he$ of $G(\Delta)$ in $\bbA$ such that $o\he \wedge h_\infty = \be$.
\end{lem}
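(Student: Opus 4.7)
The plan is to pick a spanning forest of $\Delta$, walk out from chosen roots to inductively define the $\hv_v$, and then verify that chord edges automatically satisfy the ideal-direction condition via the graphic circuit relations. First, pick a spanning forest $T \subseteq \Delta$ with one root $r_k$ per connected component; since $T$ is a basis of $G(\Delta)$, the points $\{\be : e \in T\}$ are projectively independent in $h_\infty$. Pick an origin $o \in \bbA$, turning $\bbA$ into a vector space with $o$ as zero, its displacement space canonically identified with the vector space $V_\infty$ underlying $h_\infty$. For each tree edge $e \in T$, choose a nonzero vector $u_e \in V_\infty$ projecting to $\be$. Pick affine points $\hv_{r_k} \in \bbA$ for the roots so that $\{o, \hv_{r_1}, \hv_{r_2}, \ldots\}$ together with $\{u_e : e \in T\}$ is affinely independent in $\bbA$---possible provided $\dim \bbP \geq |N|$. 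Then walk each tree: for a tree edge $e_{v_iv_j}$ directed away from its root with $\hv_i$ already defined, set $\hv_j := \hv_i + \epsilon_e\, u_e$, where $\epsilon_e \in \{\pm 1\}$ records whether the walk matches $e$'s chosen orientation.

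The critical step will be chord verification. For a chord $e_0 = e_{v_iv_j}$, its fundamental circle $C_0 \subseteq T \cup \{e_0\}$ is a matroid circuit of $G(\Delta)$, so any vector lifts of $\{\be : e \in C_0\}$ satisfy a linear dependence $\sum_{e \in C_0} \mu_e\, u_e = 0$ with every $\mu_e \neq 0$; and by construction the induced displacement is $\hv_j - \hv_i = \sum_{e \in C_0 \cap T} \epsilon_e\, u_e$. Because $G(\Delta)$ is graphic---hence a regular matroid uniquely representable over any field---one can, as a single global adjustment made at the outset, rescale the tree-edge vectors $u_e$ so that in every fundamental-circuit relation the tree-edge coefficients $\mu_e$ agree with the walk signs $\epsilon_e$. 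With this rescaling, the walk sum becomes a nonzero scalar multiple of some vector lift of $\be_0$ and hence projects to $\be_0$, so the projective line $\hv_i\hv_j$ meets $h_\infty$ at $\be_0$. Defining $\he := \hv_j - \hv_i$ for every edge then gives $o\he \wedge h_\infty = \be$. Affine independence of $\{o\} \cup \{\hv_v\}_{v \in N}$ is automatic, since each $\hv_v - o$ is a linear combination of the independent generators $\{\hv_{r_k} - o\}$ and $\{u_e : e \in T\}$ arranged at the start.

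The hard part is the global rescaling step: a single choice of the tree-edge lifts $u_e$ must simultaneously produce the signed-$\pm 1$ pattern in every fundamental-circuit relation, and this is precisely what the regularity (equivalently, the graphic nature) of $G(\Delta)$ guarantees. A secondary concern is dimensional---$\dim \bbP \geq |N|$ is needed so that $|N|+1$ affinely independent points fit in $\bbA$---which can be arranged by enlarging the ambient projective space if necessary, exactly as in the proof of Theorem \ref{T:orthoagrees}.
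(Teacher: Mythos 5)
Your construction follows the same skeleton as the paper's: fix an origin $o$, choose a spanning forest, walk outward from the roots setting $\hv_j := \hv_i + \he$ for tree edges, and represent each chord by the telescoping sum along its tree path. The genuine difference is that you isolate and actually address the chord verification, which is the crux of the lemma: for a chord $e_0$ the direction of $\hv_j - \hv_i$ lies in the span of the tree-path directions but is \emph{not} automatically the prescribed point $\be_0$. Already for a triangle, if the two tree-edge points are chosen arbitrarily on their lines $o\be$, the sum meets $h_\infty$ at an arbitrary third point of the ideal line, not necessarily the given one (unless $\#\fF=2$). The paper's proof makes exactly such arbitrary choices and then asserts the conclusion from the agreement of dependencies, so your global rescaling of the tree-edge lifts---arranged so that every fundamental-circuit relation carries the $\pm1$ coefficient pattern of the walk signs---is a necessary step, not a pedantic one, and unique representability of graphic matroids is the right tool to justify it. Two caveats: you invoke regularity/unique representability without proof, and the standard statements (Brylawski--Lucas) are for commutative fields while the lemma is over a skew field; for graphic matroids the required diagonal equivalence of the given representation with the standard one can be proved directly (induction over fundamental circuits, or an ear decomposition), and a sentence to that effect would close the argument. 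Your dimension bookkeeping (enlarging $\bbP$ so that $\#N$ independent node vectors fit, with one extra independent direction per component root) matches what the paper does with the auxiliary point $\bv_0$.
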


In other words, $\bE$ should be the projection of $\hE$ into $h_\infty$ from center $o$.

\begin{proof}
We can treat each component of $\Delta$ separately, so assume it is connected.  Let $T$ be the edge set of a spanning tree with root node $v_0$ and set $N_i :=\{ v \in N : \text{distance}_T(v_0,v) = i \}$.  Enlarge $\bbP(\fF)$ if necessary so there is a point $\bv_0$ in $h_\infty \setminus \Span \bE$.  Finally, choose a point $o \in \bbA$ to serve as an origin; that makes $\bbA$ into a vector space $\bbA_o$ over $\fF$ with addition defined by the parallelogram law.  

Now we inductively construct a basis $\{ \hv : v \in N \}$ for a subspace of $\bbA$ and representing points $\he$, $e \in E(\Delta)$, such that $\he = \hv_j - \hv_i$ for each edge $e_{v_iv_j} \in E(\Delta)$.  The first step is to choose $\hv_0 \in o\bv_0 \setminus \{o,\bv_0\}$.  Then for each edge $e_{v_0v_1} \in T$ such that $v_1 \in N_1$, choose $\he \in o\be \setminus \{o,\be\}$ and define $\hv_1 := \hv_0 + \he$; thus $\he = \hv_1-\hv_0$.  At the $i$th step, $\hv_{i-1}$ has been chosen for all nodes $v_{i-1} \in N_{i-1}$.  For every edge $e_{v_{i-1}v_i} \in T$ such that $v_{i-1}\in N_{i-1}$ and $v_i \in N_i$, choose $\he \in o\be \setminus \{o,\be\}$ and define $\hv_i := \hv_{i-1} + \he$; thus $\he = \hv_i-\hv_{i-1}$.  (If there are infinitely many edges of the form $e_{v_{i-1}v_i} \in T$ we apply the Axiom of Choice.)  Induction extends the definitions of $\hv$ and $\he$ to all nodes and to all edges in $T$.  Note that in the algebraic formula $\he = \hv_i-\hv_{i-1}$ we treat $e$ as oriented from $v_{i-1}$ to $v_i$.  If we reverse the orientation, the representing vector of $e$ is negated to $\hv_{i-1}-\hv_i$.

For an edge $e_{v_iv_j} \notin T$ there is a path $P=f_1\cdots f_l \subseteq T$ joining $v_i$ and $v_j$.  Orient each $f_j \in P$ in the direction from $v_i$ to $v_j$ and define $\he := \hf_1+\cdots+\hf_l$ with $\hf_j$ defined according to the orientation of $f$.  Then $\he = \hv_j-\hv_i$.  That completes the construction of a standard representation of $G(\Delta)$ in $\bbA_o$, since the linear dependencies of the vectors $\he$ are the same as the projective dependencies of the ideal points $\be$.  That completes the proof.

If $\hN$ does not span $\bbA_o$, we may extend it to a basis of $\bbA_o$ in order to provide a complete coordinate system.  However, that is not required for the proof.
\end{proof}

To prove Theorem \ref{T:affinodes} we dualize Lemma \ref{L:graphicrep}.  The dual     lemma says that a hyperplanar embedding $\cA_\infty$ of $G(\Delta)$ in $h_\infty$ extends to a hyperplanar representation $\cH[\Delta]$ in $\bbP(\fF)$ in which $e$ corresponds to an affine hyperplane $h_e$ whose ideal part $(h_e)_\infty$ is the given representative of $e$ in $h_\infty$, and whose hyperplanes have equations $x_i=x_j$ in a coordinate system for $\bbA(\fF)$.  
A hyperplane in the synthetic affinographic arrangement $\cA$ belongs to a parallel class whose ideal part $h_k$ is an element of $\cA_\infty$ corresponding, say, to $e \in E(\Delta)$; therefore it is parallel to one of the hyperplanes of $\cH[\Delta]$ and has equation of the form $x_j-x_i=c$, a constant.  Then we put an edge $e_{ij}$ in $\Phi$ with gain $\phi(e_{ij}) = c$.  Taking $N(\Phi) := N$, this defines the right gain graph for $\cA$, i.e., $\cA[\Phi] = \cA$.  Producing $\Phi$ completes the proof of the theorem.
\end{proof}

%%%
\subsubsection{Expansions}\label{affinoexpan}\

We turn now to the example of group and biased expansions.  Take a subgroup $\fH$ of $\fF^+$; the group expansion $\Phi = \fH\Delta$ gives an affinographic arrangement $\cA[\Phi]$ in $\fF^N$.  Viewed abstractly, we have an affinographic hyperplane representation of the biased graph $\bgr{\fH\Delta}$.  Now we reverse the process.  

\begin{cor}\label{C:affinoexpansion}
Suppose $\cA$ is a synthetic affinographic hyperplane arrangement that is a lift representation of a biased expansion graph $\Omega\downa\Delta$ in an affine geometry $\bbA(\fF)$ over a skew field $\fF$.  Assume $\Delta$ is simple, inseparable, and of order at least $3$.  Then there are a subgroup $\fH \subseteq \fF^+$ and an $\fF^+$-gain graph $\Phi$ such that $\Phi$ switches to $\fH\Delta$ and, with suitable coordinates in $\bbA(\fF)$, $\cA = \cA[\Phi]$.
\end{cor}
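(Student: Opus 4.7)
The plan is to deduce Corollary~\ref{C:affinoexpansion} as an almost immediate combination of Theorem~\ref{T:affinodes} and Lemma~\ref{L:subgroupexp}. The body of real work for both of those has already been done, so the proof here is a chain of invocations.

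First, I apply Theorem~\ref{T:affinodes} to the synthetic affinographic arrangement $\cA$ sitting in $\bbA(\fF)$. The theorem produces a coordinate system on $\bbA(\fF)$ and an $\fF^+$-gain graph $\Phi$ such that $\cA = \cA[\Phi]$ in those coordinates. Because $\cA$ is a lift representation of $\Omega$, the matroid-and-lattice isomorphism of Theorem~\ref{T:affinobias} forces $\bgr{\Phi} = \Omega(\cA) = \Omega$; in particular $\|\Phi\| = \|\Omega\|$, and the biased structure of $\bgr{\Phi}$ coincides with that of $\Omega$, including its biased-expansion projection onto $\Delta$.

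Second, I feed $\Phi$ into Lemma~\ref{L:subgroupexp}, taking the ambient gain group $\fG$ to be $\fF^+$. The hypotheses of that lemma match the standing hypotheses of the corollary: $\Delta$ is inseparable, simple, and of order at least $3$, and $\Omega = \bgr{\Phi}$ is a biased expansion of $\Delta$ by virtue of $\Omega \downa \Delta$. The lemma concludes that there is a subgroup $\fH \leq \fF^+$ with $\Omega = \bgr{\fH\Delta}$ and that $\Phi$ is a switching of $\fH\Delta$. Combining the two steps yields the statement: the selected coordinates make $\cA = \cA[\Phi]$, and $\Phi$ switches to $\fH\Delta$ for a subgroup $\fH \leq \fF^+$.

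I do not anticipate any genuine obstacle, since both ingredients are in hand. The one point deserving care is the compatibility of underlying-graph structures, namely that the $\Phi$ produced by Theorem~\ref{T:affinodes} has underlying graph identical to $\|\Omega\|$ and inherits the fiber structure over $\Delta$ needed to invoke Lemma~\ref{L:subgroupexp}. This is built into the construction of $\Omega(\cA)$ used in Theorem~\ref{T:affinobias}, which reads the parallel-class structure of $\cA$ off of $\Delta$ via $\cA_\infty \cong M(\Delta)$; so the edges of $\Phi$ fall into the fibers $p\inv(e)$ exactly as the hyperplanes of $\cA$ fall into their parallel classes, and no further reconciliation is required.
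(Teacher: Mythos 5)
Your first and last steps match the paper's proof (Theorem \ref{T:affinodes} to get $\cA=\cA[\Phi]$, then Lemma \ref{L:subgroupexp} to get the subgroup expansion), but the middle step --- ``the matroid-and-lattice isomorphism of Theorem \ref{T:affinobias} forces $\bgr{\Phi}=\Omega(\cA)=\Omega$'' --- is a genuine gap. The hypothesis that $\cA$ is a lift representation of $\Omega$ says only that $M(\cA)\cong L(\Omega)$ as matroids; combined with Theorem \ref{T:affinobias} this gives $L(\Omega)\cong L(\Omega(\cA))$ over the identified ground set, and a lift-matroid isomorphism does not by itself yield an isomorphism of biased graphs. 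There is no a priori reason the fibers $p\inv(e)$ of the given $\Omega$ line up with the parallel classes $\cA_i$, nor that the node/balance structure of $\Omega$ agrees with that of $\Omega(\cA)$. (Your closing remark that compatibility ``is built into the construction of $\Omega(\cA)$'' only describes $\Omega(\cA)$ itself; it says nothing about the abstractly given $\Omega$, which is the object the corollary is about.) Passing from ``the matroid is represented'' to ``the biased graph itself acquires gains'' is exactly the content of canonicity, and that is the step you have skipped.

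The paper closes this gap as follows: it splits into cases. If $\Omega$ is a trivial expansion the subgroup is trivial. Otherwise $\Omega$ is thick, so by Theorem \ref{T:thick} the lift representation is canonical (i.e., extends to a representation of $L_0(\Omega)$), and then Theorem \ref{T:canonicalrep}(ii) yields that $\Omega$ has gains in $\fF^+$; only then does Lemma \ref{L:subgroupexp} apply. To repair your argument you should replace the asserted identification $\bgr{\Phi}=\Omega$ with this canonicity argument (or supply an independent proof that the representation identifies the fiber and balance structure of $\Omega$ with that of $\Omega(\cA)$, which amounts to reproving Theorem \ref{T:thick} in this setting).
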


\begin{proof}
Theorem \ref{T:affinodes} implies that $\cA = \cA[\Phi]$ for some gain graph $\Phi$ with gains in $\fF^+$.  Since $\cA$ is a lift representation of a biased expansion of $\Delta$, $\|\Phi\|$ must simplify to $\Delta$.  We want to prove $\Phi$ switches to a subgroup expansion of $\Delta$.  If $\Omega$ is a trivial expansion, the subgroup is trivial.  Otherwise, $\Omega$ is thick and the representation is canonical (Theorem \ref{T:thick}), which implies that $\Omega$ has gains in $\fF^+$ (Theorem \ref{T:canonicalrep}).  A biased expansion with gains in a group $\fG$ has to switch to a subgroup expansion (Lemma \ref{L:subgroupexp}).
\end{proof}

For the corresponding result about non-Desarguesian planes see \cite[\Paffsubnet]{BGPP}.

%%%
\subsubsection{Duality}\label{orthodual}\

Finally, we briefly explain how the point and hyperplanar lift-matroid representations are dual to each other.  
Summarizing the dual correspondence: the point representation $e \mapsto \he$ of $G(\Delta)$ in $\bbP'$ dualizes to the hyperplanar representation $e \mapsto (h_e)_\bbP$ in $\bbP$; the extra point $\he_0$ corresponds to $h_\infty$; the edge line $l_{ij} = \bz'(v_iv_j)\vee\he_0$ for $v_iv_j \in E(\Delta)$ corresponds to $(h_e)_\infty = (h_e)_\bbP \wedge h_\infty = h_{ij}$; and, last but far from least, the representing point $\he$ of an edge $e \in E$ is dual to the affine or projective hyperplane $h_e$ or $(h_e)_\bbP$.

We did not use this fact to prove anything about them, for three reasons.  First, we think it is good to have direct proofs, thereby illustrating how one works with the representations.  Second, we found the projective interpretation of the orthographic representations was stronger but the affine interpretation of affinographic hyperplanes is the more interesting.  Third, the reader will have noticed that the proofs share a dependence on Lemma \ref{L:graphicrep}, but less so for the orthographic than for the affinographic representations, a fact that suggests the separate proofs are not themselves mere duals of each other.  Furthermore, since the corollaries and developments are not precisely parallel, by dualizing some results one can get a few more properties that we did not mention.

%%%%%%%%%%%%%%%%%%%%%%%%%%%%%%%%%%%%%%%%%%%%%%%%%%%%%%%%%%%%%%%%%%%%%%%%%%%%%%%%%%

\pagebreak[2]


\begin{thebibliography}{99}

\bibitem{Alt} N.~Altshiller-Court, 
``College Geometry: A Second Course in Plane Geometry for Colleges and Normal Schools'',
Johnson, Richmond, Va., 1925;    
Second ed., ``College Geometry: An Introduction to the Modern Geometry of the Triangle and the Circle'',
Barnes \& Noble, New York, 1952.
[MR 14,307d]

\bibitem{Birk} Garrett Birkhoff, 
``{Lattice theory}'', third ed., 
Amer.\ Math.\ Soc.\ Colloq.\ Publ., Vol.\ XXV. 
American Mathematical Society, Providence, R.I., 1967.
[MR 37 \#2638]  %[Zbl 153.02501 (153, 25a)]

\bibitem{Bold} P.~Boldescu, 
The theorems of Menelaus and Ceva in an $n$-dimensional affine space 
(in Romanian;  French summary),  
\emph{An.\ Univ.\ Craiova Ser.\ a IV-a} {\bf 1} (1970), 101--106.  
[MR 48 \#12251]  %[Zbl ]

\bibitem{BN} B.\ Budinsk\'y and Z.~N\'aden{\'\i}k, 
Mehrdimensionales Analogon zu S\"atzen von Menelaos und Ceva, 
\emph{{\v C}asopis P{\v e}st.\ Mat.} {\bf 97} (1972), 75--77, 95. 
[MR 46 \#6141]

\bibitem{BC}  Francis Buekenhout and Peter Cameron, 
Projective and affine geometry over division rings. 
In F.\ Buekenhout, ed., \emph{Handbook of Incidence Geometry}, Ch.\ 2, pp.\ 27--62. 
North-Holland, Amsterdam, 1995.
[MR 1360717 (96g:51005)]

\bibitem{Cox} H.S.M.\ Coxeter, 
``Introduction to Geometry,'' second ed., 
Wiley, New York, 1969.  
[MR 49 \#11369]  %[Zbl ]
 
\bibitem{CR} H.H.\ Crapo and G.-C.\ Rota,
``{On the Foundations of Combinatorial Theory: Combinatorial Geometries}'', prelim.\ ed.
MIT Press, Cambridge, Mass., 1970.
[MR 45 \#74]  %[Zbl 216.02101 (216, 21a)].

\bibitem{CGL} T.A.\ Dowling, 
A class of geometric lattices based on finite groups, 
\emph{J.\ Combin.\ Theory Ser.\ B} {\bf 14} (1973), 61--86.  
[MR 46 \#7066]  
Erratum, 
\emph{ibid}.\ {\bf 15} (1973), 211.  
[MR 47 \#8369]  %[Zbl 247.05019]

\bibitem{BGPP}  Rigoberto Fl\'orez and Thomas Zaslavsky, 
Projective planarity of $3$-nets and biased graphs, 
submitted.

\bibitem{Klein} T.\ Klein, 
A certain generalization of the theorems of Menelaos and Ceva (in Slovak), 
\emph{{\v C}asopis P{\v e}st.\ Mat.} {\bf 98} (1973), 22--25.
[MR 47 \#5704]  %[Zbl ]

\bibitem{Oxley} J.G.\ Oxley,
``{Matroid Theory}'',  
Oxford Univ.\ Press, Oxford, 1992.
[MR 94d:05033]  %[Zbl 784.05002]
%Second ed., 2011.

\bibitem{VY}  Oswald Veblen and John Wesley Young, 
\emph{Projective Geometry}, Vol.\ 1.
Ginn, Boston, 1916.
[MR 0179666 (31 \#3912a)]

\bibitem{Whd}  A.\ N.\ Whitehead, 
\emph{The Axioms of Projective Geometry}.  
Cambridge Univ.\ Press, London, 1906.

\bibitem{BG1} Thomas Zaslavsky,
Biased graphs.  I.\  Bias, balance, and gains.  
\emph{J.\ Combin.\ Theory Ser.\ B} {\bf 47} (1989), 32--52.  
[MR 90k:05138]  %[Zbl 714.05057]

\bibitem{BG2} ------, %Thomas Zaslavsky,
Biased graphs.  II.\  The three matroids.  
\emph{J.\ Combin.\ Theory Ser.\ B} {\bf 51} (1991), 46--72.  
[MR 91m:05056]  %[Zbl 763.05096]

\bibitem{BG4} ------, %Thomas Zaslavsky,
Biased graphs IV:  Geometrical realizations.  
\emph{J.\ Combin.\ Theory Ser.\ B} {\bf 89}(2) (2003), 231--297.
[MR 2005b:05057]  %{Zbl 1031.05034]

\bibitem{BG5} ------, %Thomas Zaslavsky,
Biased graphs.  V.\  Group and biased expansions.  
In preparation. 

%\bibitem{SBM} ------, %Thomas Zaslavsky,
%Biased graphs whose matroids are special binary matroids.
%\emph{Graphs Combin.}\ {\bf 6} (1990), 77--93.  
%[MR 91f:05097]  %[Zbl 786.05020]

\bibitem{FrM} ------,  %Thomas Zaslavsky,
Frame matroids and biased graphs,  
\emph{European J.\ Combin.}\ {\bf 15} (1994), 303--307.  
[MR 95a:05021]  %[Zbl 797.05027]

\bibitem{AMQ} ------,  %Thomas Zaslavsky,
Associativity in multiary quasigroups:  The way of biased expansions, 
\emph{aequationes math.}\ {\bf 83}(1) (2012), 1--66.
[MR 2885498]  %[Zbl 1235.05059]

\end{thebibliography}
\end{document}